\documentclass[12pt]{amsart}  
\usepackage{fullpage, amsmath, amssymb, amsthm, enumerate, url, bibentry, hyperref, color,xcolor, comment}
\usepackage[alphabetic,lite]{amsrefs}
\usepackage{mathrsfs}
\usepackage[all,cmtip]{xy}
\usepackage{multirow}

\usepackage{hyperref}
\hypersetup{
    colorlinks,
    linkcolor={red},
    citecolor={cyan},
    urlcolor={blue}
}

\usepackage[normalem]{ulem}
\usepackage[utf8]{inputenc}
\usepackage{enumitem}

\definecolor{purple}{rgb}{0.59, 0.44, 0.84}

\theoremstyle{plain} 
\newtheorem{thm}{Theorem}[subsection]

\newtheorem{cor}[thm]{Corollary}
\newtheorem{lem}[thm]{Lemma}
\newtheorem{prop}[thm]{Proposition}
\theoremstyle{definition}

\newtheorem{rem}[thm]{Remark}

\DeclareMathOperator{\Kl}{Kl}
\DeclareMathOperator{\Klt}{\widetilde{Kl}}
\DeclareMathOperator{\Ap}{\AA_{\FF_\mathnormal{p}}^1}
\DeclareMathOperator{\Gp}{\mathbb{G}_{\mathnormal{m},\FF_\mathnormal{p}}}
\DeclareMathOperator{\Gptwo}{\mathbb{G}_{\mathnormal{m},\FF_\mathnormal{p}}^2}
\DeclareMathOperator{\Gpk}{\mathbb{G}_{\mathnormal{m},\FF_\mathnormal{p}}^{\mathnormal{k}}}
\DeclareMathOperator{\Gpkone}{\mathbb{G}_{\mathnormal{m},\FF_\mathnormal{p}}^{\mathnormal{k}+1}}
\DeclareMathOperator{\Gpb}{\mathbb{G}_{\mathrm{m},\overline{\FF}_\mathnormal{p}}}
\DeclareMathOperator{\Gpbkone}{\mathbb{G}_{\mathnormal{m},\overline{\FF}_\mathnormal{p}}^{\mathnormal{k}+1}}
\DeclareMathOperator{\AS}{\mathscr{L}_{\psi}}
\DeclareMathOperator{\ASk}{\mathscr{L}_{\mathnormal{f_k}}}
\DeclareMathOperator{\ASkt}{\mathscr{L}_{\mathnormal{\tilde{f}_k}}}
\DeclareMathOperator{\legen}{\mathnormal{\Bigl(\frac{\bullet}{p}\Bigr)}}
\DeclareMathOperator{\et}{\acute{e}t}

\def\BB{\mathbb B}
\def\CC{\mathbb C}
\def\FF{\mathbb F}
\def\PP{\mathbb{P}}
\def\QQ{\mathbb Q}
\def\RR{\mathbb R}
\def\ZZ{\mathbb Z}
\def\({\left(}
\def\){\right)}
\def\ol{\overline}
\def\CCC#1#2{\binom {#1}{#2}}

\newcommand{\bR}{\mathbf{R}}
\renewcommand{\AA}{\mathbb{A}}

\newcommand{\cF}{\mathcal{F}}
\newcommand{\cK}{\mathcal{K}}
\newcommand{\cV}{\mathcal{V}}
\newcommand{\rN}{\mathrm{N}}
\newcommand{\rc}{\mathrm{c}}

\newcommand{\sF}{\mathscr{F}}
\newcommand{\sK}{\mathscr{K}}

\newcommand{\sV}{\mathscr{V}}
\newcommand{\eps}{\varepsilon}
\newcommand{\spm}{\mathsf{m}^+} 
\newcommand{\tm}{\mathsf{m}} 
\newcommand{\spmt}{\widetilde{\mathsf{m}}} 
\newcommand{\qt}{\kappa_2} 
\newcommand{\de}{\mathrm{d}}
\newcommand{\coH}{\mathrm{H}}
\newcommand{\dR}{\mathrm{dR}}
\newcommand{\rmid}{\mathrm{mid}}
\newcommand{\cyc}{\mathrm{cyc}}
\newcommand{\Frob}{\mathrm{Frob}}
\newcommand{\crys}{\mathrm{crys}}
\newcommand{\st}{\mathrm{st}}
\newcommand{\flr}[1]{\left\lfloor #1\right\rfloor}
\newcommand{\Motive}{\mathsf{M}}
\newcommand{\symgp}{\mathfrak{S}}
\newcommand{\FFbar}{\overline{\mathbb{F}}}
\newcommand{\QQbar}{\overline{\mathbb{Q}}}
\newcommand{\cond}{\mathfrak{N}} 
\newcommand{\sVt}{\widetilde{\mathscr{V}}}
\newcommand{\nullt}{{\tilde{0}}} 
\newcommand{\inftyt}{{\widetilde{\infty}}} 
\newcommand{\Gm}{\mathbb{G}_{\mathrm{m}}}
\newcommand{\Gmx}{\mathbb{G}_{\mathrm{m},x}}
\newcommand{\Gmt}{\mathbb{G}_{\mathrm{m},t}}
\newcommand{\Gmz}{\mathbb{G}_{\mathrm{m},z}}
\newcommand{\Aut}{\operatorname{Aut}}
\newcommand{\Gal}{\operatorname{Gal}}
\newcommand{\gr}{\operatorname{gr}}
\newcommand{\rk}{\operatorname{rk}}
\newcommand{\Sym}{\operatorname{Sym}}
\newcommand{\tr}{\operatorname{tr}}
\newcommand{\sw}{\operatorname{sw}} 
\newcommand{\GL}{\operatorname{GL}}
\newcommand{\PGL}{\operatorname{PGL}}
\newcommand{\SO}{\operatorname{SO}}
\newcommand{\SL}{\operatorname{SL}}
\newcommand{\diag}{\operatorname{diag}}
\newcommand{\LMFDB}{\texttt{LMFDB}}
\newcommand{\reven}{\mathrm{even}}
\newcommand{\rodd}{\mathrm{odd}}

\newcommand*\HYPERskip{&}
\catcode`,\active
\newcommand*\pFq{
	\begingroup
	\catcode`\,\active
	\def ,{\HYPERskip}%
	\doHyper
}
\catcode`\,12
\def\doHyper#1#2#3#4#5{%
	\, _{#1}F_{#2}\left[\begin{matrix}#3 \smallskip \\  #4\end{matrix} \; ; \; #5\right]%
	\endgroup
}

\catcode`,\active

\catcode`\,12

\catcode`,\active

\catcode`\,12

\newcommand*\HYPERpp{&}
\catcode`,\active
\newcommand*\pPPq{
	\begingroup
	\catcode`\,\active
	\def ,{\HYPERpp}%
	\doHyperFpp
}
\catcode`\,12
\def\doHyperFpp#1#2#3#4#5{%
	\, _{#1}{\mathbb P}_{#2}\left[\begin{matrix}#3 \smallskip \\  #4\end{matrix} \; ; \; #5\right]%
	\endgroup
}

\title[Twisted Moments of Kloosterman Sums]{Galois Representations and Modularity for Twisted Moments of Kloosterman Sums}
\author[]{Zi Li Lim, Fang-Ting Tu and Jeng-Daw Yu}

\begin{document}

\maketitle

\begin{abstract}
Associated with the classical Kloosterman sums,
we consider the quadratic twisted symmetric power moments.
We investigate the cohomological properties
and construct the motives over the field of the rationals
underlying such moments.
We determine the associated modular forms
in the lower degree cases.
\end{abstract}

\tableofcontents

\section{Introduction}

\subsection{The moments}
Let $p$ be a prime number
and $\FF_p$ the finite field of $p$ elements.
Fix a nontrivial additive character
$\psi\colon \FF_p\to\CC^{\times}$.
Fix an algebraic closure $\FFbar_p$ of $\FF_p$.
For any finite field of $q$ elements $\FF_q$ in $\FFbar_p$,
denote $\tr_{\FF_q/\FF_p}\colon \FF_q\to\FF_p$ the trace map.
For any $a\in \FF_q^{\times}$,
the \textit{Kloosterman sum} $\Kl_2(q;a)$ is the exponential sum given by
\begin{equation}\label{eq:Kl_sum}
    \Kl_2(q;a)=\sum_{x\in\FF_q^{\times}}\psi(\tr_{\FF_q/\FF_p}(x+ax^{-1})).
\end{equation}

It is known that,
for $a\in\FF_q^\times$,
there exist algebraic integers
$\alpha_a, \beta_a$ with $\alpha_a \beta_a=q$
such that
\[ \Kl_2(q^n;a) = -(\alpha_a^n+\beta_a^n),
\quad
n\in\ZZ_{>0}. \]
For a positive integer $k$,
the \textit{symmetric power of degree $k$},
$\Kl_2^{\Sym^k}(q;a)$,
of the Kloosterman sum  is defined as
\begin{gather*}
    \Kl_2^{\Sym^k}(q;a)=\sum_{i=0}^k\alpha_a^{k-i}\beta_a^i.
\end{gather*}
(Notice however the sign change $\Kl_2^{\Sym^1}(q;a) = -\Kl_2(q;a)$
in the $k=1$ case.)
Let $\qt\colon \FF_q^\times\to\CC^\times$
be the nontrivial quadratic character.
The \textit{symmetric power moment}, $\spm_k(q)$,
and the (quadratic) \textit{twisted symmetric power moment of degree $k$},
$\tm_k(q)$,
of the Kloosterman sums are defined respectively as
\[ \spm_k(q) = \sum_{a\in \FF_q^{\times}} \Kl_2^{\Sym^k}(q;a),
\quad
\tm_k(q) =\sum_{a\in \FF_q^{\times}} \qt(a) \Kl_2^{\Sym^k}(q;a). \]
Our main focus in this paper is on the twisted moments,
so we choose to lighten the notation for the twisted case.
It is easy to check that
these moments $\spm_k(q), \tm_k(q)$
are indeed integers.

We also consider a related exponential sum
\[ \Klt_2(q;a) = \sum_{y\in\FF_q^{\times}}\psi(\tr_{\FF_q/\FF_p}a(y+y^{-1})),
\quad
a \in \FF_q^\times, \]
which has a similar decomposition
\[ \Klt_2(q^n;a) = -(\tilde{\alpha}_a^n+\tilde{\beta}_a^n),
\quad
n\in\ZZ_{>0}, \]
with $\tilde{\alpha}_a\tilde{\beta}_a = q$.
(In fact,
one has
$\Klt_2(q;a) = \sum_{x\in\FF_q^{\times}}\psi(\tr_{\FF_q/\FF_p}(x+a^2x^{-1}))
= \Kl_2(q;a^2)$
via the substitution $y=a^{-1}x \in \FF_q^\times$.)
Define accordingly the associated symmetric power moment
\[ \spmt_k(q)
= \sum_{a\in\FF_q^\times}\sum_{i=0}^k\tilde{\alpha}_a^{k-i}\tilde{\beta}_a^i. \]
Since
\[ \spm_k(q) + \tm_k(q) =
2\sum_{b\in (\FF_q^\times)^2} \Kl_2^{\Sym^k}(q;b) \]
and
\[ 2\sum_{b\in (\FF_q^\times)^2}\psi(\tr_{\FF_q/\FF_p}(x+bx^{-1}))^k \\
= \sum_{a\in \FF_q^\times}\psi(\tr_{\FF_q/\FF_p}a(y+y^{-1}))^k \]
by writing $b = (\pm a)^2, x=ay$,
we then obtain
\begin{equation}\label{eq:moments_relation}
\spmt_k(q) = \spm_k(q) + \tm_k(q)
\end{equation}
after turning the symmetric power
$\sum u^{k-i}v^i$ into a combination of powers
$(u+v)^j$ for $0\leq j\leq k$ with $uv = q$.
For instance,
by direct computation, one has
\[ \spm_1(q) = -1,
\quad
\spmt_1(q) = -1-\qt(-1)q,
\quad
\spm_2(q) = -1,
\quad
\spmt_2(q) = -1-q. \]
Thus,
\begin{equation}\label{eq:small_moments}
\tm_1(q) = -\qt(-1)q,
\quad
\tm_2(q) = -q.
\end{equation}
We also have
\[ \spm_3(q) = -1-\qt(-3)q^2,
\quad
\spm_4(q) = -1-q^2, \]
for $p\geq 5$ and $p\geq 3$, respectively.

\subsubsection*{Evans's modularity conjectures}
These are the explicit descriptions
in the next four cases in $k$ for $\spm_k(p)$ and $\tm_k(p)$.
Based on extensive computations,
Ron Evans observes that the sequences of
(the negative of the pure parts of) the moments
$\{\spm_k(p)\}_p$ for $k=5,6,8$
and $\{\tm_k(p)\}_p$ for $k=3,4,6$
coincide, up to constant powers of $p$,
with the Fourier coefficients of certain cuspidal Hecke eigenforms
of weights $k-2$ and $k$, respectively,
at least for $p>k$.
In addition, the moments $\{\spm_7(p)\}_{p>7}$ and $\{\tm_5(p)\}_{p>5}$
are obtained from the symmetric squares
of Hecke eigenforms of weight 3,
twisted by constant powers of $p$ and Dirichlet characters.
See \cite[\S 4]{Evans-hyper}
where our moments $\spm_k(p)$ and $\tm_k(p)$
are denoted by
$T_k(1,0)$ and $T_k(\phi,0)=T_k(\phi,0,1)$, respectively.
See Theorem \ref{thm:EC}
and also Remarks \ref{rem:Evans3} and \ref{rem:Evans5} below
for the precise statements in the case $\tm_k(p)$.
In the appearance of his paper,
this modularity for $\spm_5,\spm_6,\tm_3,\tm_4$
has been obtained by various authors as mentioned therein
(see also the note in \cite[p.530]{Evans-hyper} for $\tm_4$).

\subsubsection*{Galois representations}
Using ideas from the geometric Langlands program
and tools including affine Grassmannians
and homogeneous Fourier transforms,
Yun \cite{Yun} constructs a continuous $\ell$-adic representation of
$\Gal(\QQbar/\QQ)$, for each prime $\ell$,
which is a subquotient of the \'etale cohomology group
of a certain projective smooth variety over $\QQ$,
whose trace of a geometric Frobenius element at an unramified prime $p$
coincides with (the pure part of) the negative moment $-\spm_k(p)$
for each fixed $k$.
Applying Serre's modularity conjecture \cite{Serre-GQ},
proved by Khare and Wintenberger \cite{KW1,KW2},
to such Galois representations,
Yun then proceeds to reprove the modularity
for the moments $\spm_5(p)$ and $\spm_6(p)$,
confirm Evans's modularity conjecture for $\spm_7(p)$,
and reduce that for $\spm_8(p)$
to a finite calculation,
which is done by Vincent in an appendix of the same paper.

On the other hand,
aiming to understand the shape of the completed $L$-function
attached to the pure part of the moments $\spm_k(p)$
for each $k$ and its functional equation,
observed by Broadhurst and Roberts \cite{Broadhurst17}
via intensive numerics,
Fres\'an, Sabbah and Yu \cite{FSY} develop a different approach
for the arithmetic structures of $\spm_k(p)$
originated from a viewpoint of (irregular) Hodge theory.
The authors obtain an explicit hypersurface $\sK$ in a torus
whose \'etale cohomology in the middle degree contains
the Galois representation constructed by Yun,
up to semisimplification,
as a direct summand.
The geometric model is simple enough
to allow the authors to extract detailed information
on ramification behaviors
at bad primes for $\spm_k(p)$,
at least for $p>2$.
Moreover, by an investigation of the characteristic zero analogues
of the Kloosterman sheaf and its symmetric powers,
the Hodge numbers underlying (the motive of) the Galois representation
are determined.
With the current advances of automorphic theory,
this is sufficient to conclude that
the Galois representations are potentially automorphic.

In addition, both approaches \cite{Yun} and \cite{FSY}
apply to the symmetric power moments $\tm_k^{(n+1),+}(p)$
of Kloosterman sums of higher rank $\Kl_{n+1}$
so one obtains the associated Galois representations.
(In fact, Yun constructs such Galois representations
for $V$-moments of the generalized Kloosterman sheaf $\Kl_{\widehat{G}}$
for irreducible representations $V$
of an almost simple split algebraic group $\widehat{G}$ over $\QQ_\ell$.
The moments $\tm_k^{(n+1),+}(p)$ correspond to
$\widehat{G} = \SL_{n+1}$ with
$V$ the $k$-th symmetric power of the standard representation.)
Qin \cite{Qin}
then uses the irregular Hodge approach as in \cite{FSY} to investigate
the Hodge structures underlying $\tm_k^{(n+1),+}(p)$,
and obtains the potential automorphy in certain cases.
Adapting the modularity methods similar to \cite{Yun},
Qin proves the Evans type formulas
relating the moments $\tm_k^{(n+1),+}(p)$ and explicit modular forms
in the rank two cases,
including all the examples listed in \cite[\S1.3]{Yun}
for the group $\widehat{G} = \SL_{n+1}$.

We remark that an explicit \textit{virtual} scheme over $\ZZ$
whose reduction modulo $p$ determines $\tm_k^{(n+1),+}(p)$
is constructed in \cite{FW08}.

\subsection{Main results}
The main goal of this paper is to investigate
the arithmetic
of the sequence of twisted moments
$\{ \tm_k(p) \}$ as $p$ varies.
In particular,
we extend the Galois representation theoretic parts
and the modularity results in \cite{Yun} and \cite{FSY}
to the twisted setting.
We have the following two theorems.

\begin{thm}\label{thm:GR}
For each $k\geq 1$,
let $d = \flr{(k+1)/2} - \delta_{2+4\ZZ}(k)$
where $\delta_{2+4\ZZ}$ is the characteristic function
supported on $2+4\ZZ \subset \ZZ$.
There exists a compatible family of continuous $\ell$-adic representations
for prime numbers $\ell$
\[ \rho_{k,\ell}\colon \Gal(\QQbar/\QQ) \longrightarrow \GL_d(\QQ_\ell), \]
which is unramified and satisfying
\[ \tr\big(\rho_{k,\ell}(\Frob_p)\big) = -\tm_k(p) - \delta_{2+4\ZZ}(k)p^{k/2} \]
at prime $p\neq\ell$
and $p > k$ (resp., $p>k/2$) if $k$ is odd (resp., $k$ is even).
Here $\Frob_p$ is a geometric Frobenius element at $p$
in $\Gal(\QQbar/\QQ)$.
Moreover, $\rho_{k,\ell}$ is self-dual, up to a Tate twist,
is de Rham above $\ell$ and crystalline
if $\ell > \max\{k,2\}$ (resp., $\ell >k/2$) when $k$ is odd (resp., $k$ is even),
and is potentially automorphic.
\end{thm}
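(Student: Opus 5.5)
We follow the strategy of \cite{FSY}, adapted to the twisted setting. On $\Gm$ over $\ZZ[1/2]$ let $\Kl_2$ be the Kloosterman sheaf and $\mathscr{L}_{\qt}$ the Kummer sheaf of the quadratic character. The basic object is the compactly supported cohomology $\coH^1_c(\Gm\otimes\FFbar_p, \Sym^k\Kl_2\otimes\mathscr{L}_{\qt})$. By Grothendieck--Lefschetz its Frobenius trace is $-\tm_k(p)$; the vanishing of $\coH^0_c$ and $\coH^2_c$ is shown by checking that $\Sym^k\Kl_2\otimes\mathscr{L}_{\qt}$ has no geometric invariants or coinvariants. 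The monodromy of $\Kl_2$ is $\SL_2$, so $\Sym^k$ is irreducible of dimension $k+1>1$, and this gives the vanishing.

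To obtain a representation of $\Gal(\QQbar/\QQ)$ rather than a sheaf-theoretic family in $p$, we pull back along the Kummer covering $a=t^2$. This turns the twisted moment into part of $\spmt_k$, as in \eqref{eq:moments_relation}, and realises $\tm_k$ as the anti-invariant part under $t\mapsto -t$. Geometrically we use the variety
\[
\sum_{i=1}^{k}\bigl(y_i+y_i^{-1}\bigr)\cdot t
\]
on a torus, or more precisely the hypersurface $\{\sum_i (y_i+y_i^{-1})=0\}$, together with the symmetric-group action of $\symgp_k$ and $\pm$ symmetries. Taking the $\symgp_k$-invariant and $(-1)$-isotypic part of its middle-degree cohomology cuts out a motive over $\QQ$ whose $\ell$-adic realisation, for $p\nmid \ell$ in the good range, has trace $-\tm_k(p)$ plus explicit Tate-type corrections. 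These corrections come from the boundary and from weight drops.

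We then pass to the pure part $\rho_{k,\ell}$, the weight-$(k+1)$ quotient, which is the image of compact support in ordinary cohomology. Computing the dimension $d$ requires the local monodromy of $\Sym^k\Kl_2\otimes\mathscr{L}_{\qt}$ at $0$ and $\infty$:
\begin{itemize}
\item at $0$ the monodromy is a unipotent Jordan block of size $k+1$ twisted by the quadratic character, so it has no invariants;
\item at $\infty$ the wild part has Swan conductor $\flr{(k+1)/2}$ up to a tame piece, and the tame invariants appear exactly when $k\equiv 2\bmod 4$.
\end{itemize}
The Euler--Poincar\'e formula then gives the rank of $\coH^1_c$. Subtracting the invariants at $\infty$ (the non-pure part) gives $d=\flr{(k+1)/2}-\delta_{2+4\ZZ}(k)$, and the Tate-type invariant part produces the trace correction $\delta_{2+4\ZZ}(k)p^{k/2}$.

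The remaining properties are obtained as follows.
\begin{itemize}
\item \emph{Self-duality up to Tate twist} comes from the symplectic or orthogonal pairing on $\Sym^k\Kl_2$ and the self-duality of $\mathscr{L}_{\qt}$, via Poincar\'e duality on middle cohomology.
\item \emph{Unramifiedness and crystallinity.} Unramifiedness at $p>k$ (odd $k$) or $p>k/2$ (even $k$) comes from good reduction of the model after removing the bad fibres; the threshold is where the wild ramification at $\infty$ stays tame-compatible. Crystallinity for $\ell$ in the same range follows from Fontaine--Messing / Faltings comparison for the smooth proper model.
\item \emph{Compatibility of the family} follows since everything is cut out by algebraic correspondences from \'etale cohomology of a fixed variety.
\end{itemize}

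For potential automorphy we compute the Hodge numbers, using the irregular Hodge filtration of the de Rham analogue (the twisted Bessel connection) as in \cite{FSY}. We aim to show that they are all distinct, of the form $0,1$ in consecutive steps of $2$. Together with regularity and self-duality, the potential automorphy theorems of Barnet-Lamb--Gee--Geraghty--Taylor then apply. This also requires irreducibility or a decomposition argument, which we would handle by passing to irreducible constituents, each of which is again regular and polarised.

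The main obstacle is the exact local analysis at $\infty$ after the quadratic twist: the Swan conductor and the dimension of the tame invariants. This is what determines $d$, the $k\equiv 2 \bmod 4$ correction, and the irregular Hodge numbers.
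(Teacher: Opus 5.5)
Your skeleton is the paper's: the hypersurface $\sK$ with an $\symgp_k\times\mu_2$-isotypic projector, the weight-$(k+1)$ quotient, local monodromy at $0,\infty$ for $d$ and the $p^{k/2}$ correction, and irregular Hodge numbers. But three steps are wrong or unsupported.

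First, the projector must use the \emph{sign} character of $\symgp_k$ (times the nontrivial character of $\mu_2$), not $\symgp_k$-invariants. The sheaf $\Kl_2^{\otimes k}$ arises in degree $k$ by K\"unneth from degree-one classes, so permuting coordinates carries a Koszul sign, and the invariants would give $\wedge^k\Kl_2=0$ for $k\geq 3$.

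Second, three things do not come from a ``smooth proper model'': unramifiedness at $p>k$ (resp.\ $p>k/2$), the identification of $V_{k,\ell}\otimes\QQbar_\ell$ with $\coH^1_{\et,\rmid}(\Gpb,\sV_k)$ as Frobenius modules (which is what yields the trace formula), and crystallinity. The variety $\sK$ is affine, and for even $k$ it is already singular over $\QQ$, with ordinary double points at $y\in\{\pm1\}^k$, $\sum_i y_i=0$. The operative fact is that the singular locus of $\sK_{\FFbar_p}$ is $\{y\in\{\pm1\}^k : \sum_i y_i\equiv 0 \bmod p\}$. So \emph{new} ordinary double points (with $\sum_i y_i\neq 0$) appear exactly when $p\leq k$ for odd $k$, and when $2p\leq k$ for even $k$. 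Even in the good range you still need the explicit compactification of $\sK$ over $\ZZ_p$ and the Picard--Lefschetz/specialization analysis of \cite{FSY} with $\chi$ in place of $\chi^+$ (Propositions \ref{prop:V_K}, \ref{oddrep}, \ref{evenrep}), and the same geometry over $\ZZ_\ell$ for Proposition \ref{prop:p-adicHodge}. Your Swan-conductor count at $\infty$ only shows $\dim\coH^1_{\et,\rmid}(\Gpb,\sV_k)=d$ in that range; indeed $\Xi_k(p)$ indexes the $\symgp_k\times\mu_2$-orbits of the new singular points. Equal dimensions show neither that inertia acts trivially nor that specialization is an isomorphism, and ``the wild ramification stays tame-compatible'' is not a mechanism for either.

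Third, the potential automorphy step fails as stated. The compatible-system potential automorphy theorem of Barnet-Lamb--Gee--Geraghty--Taylor assumes irreducibility. Your workaround, that irreducible constituents are again regular and polarised, is false in general: a constituent $W$ may be matched by the pairing with a \emph{different} constituent $W'\cong W^\vee\otimes\chi_{\cyc}^{-k-1}$, so $W$ need not be essentially self-dual. Nor does anything guarantee that the constituents for varying $\ell$ form compatible systems. The paper instead applies Patrikis--Taylor \cite[Thm.\,A]{PT} (Corollary \ref{cor:pot_auto}), which gives potential automorphy for pure, regular, odd essentially self-dual weakly compatible systems with no irreducibility hypothesis. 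Its inputs are exactly purity of weight $k+1$, multiplicity-one Hodge numbers (Proposition \ref{prop:HodgeNumbers}), and the $(-1)^{k+1}$-symmetric pairing with multiplier $\chi_{\cyc}^{-k-1}$. (Minor: the nonzero Hodge numbers are not spaced by $2$ for even $k$; for $k=4$ they are $h^{1,4}=h^{4,1}=1$. Only multiplicity one matters.)
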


In the following,
the {\LMFDB}
is referred to the online database \cite{LMFDB}
of $L$-functions and modular forms.

\begin{thm}\label{thm:EC}
For a cuspidal Hecke eigenform $f$ on some congruence subgroup,
let
$f(q)=\sum_{n=1}^\infty a_f(n)q^n$
be its $q$-expansion.
The following modularity holds.
\begin{enumerate}
\item
Let $f$ be the modular form $f_{\text{48.3.e.a}}$ (CM by $\QQ(\sqrt{-3}))$
in {\LMFDB}.
Then, for $p\geq 5$,
\begin{equation}\label{eq:Evans3}
\tm_3(p) = -a_f(p)\cdot p.
\end{equation}
\item
Let $f$ be the modular form $f_{\text{8.4.a.a}}$ in {\LMFDB}.
Then, for each odd prime $p$,
\[ \tm_4(p) = -a_f(p)\cdot p \]
\item
Let $f$ be the modular form $f_{\text{60.3.b.a}}$ in {\LMFDB}
(with $a_f(n)\in \QQ(\sqrt 5, i)$).
Then, when $p\geq 7$,
\begin{equation}\label{eq:Evans5}
\tm_5(p) =
- \Bigl(\frac{-1}{p}\Bigr)a_f(p)^2\cdot p + \Bigl(\frac{15}p\Bigr)p^3,
\end{equation}
where $\(\frac{\cdot}p\)$ is the Legendre symbol. 
\item
Let $f$ be the modular form $f_{\text{24.6.a.a}}$ in {\LMFDB}.
Then, when $p\geq 5$,
\[ \tm_6(p) = -a_f(p)\cdot p-p^3. \]
\end{enumerate}
\end{thm}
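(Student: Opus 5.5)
The plan is to deduce Theorem \ref{thm:EC} from Theorem \ref{thm:GR} by a Faltings--Serre/Livné type comparison, as Yun does for $\spm_k$. For $k=3,4,5,6$ we have $d=2,2,3,2$. Set $r_k=\rho_{k,\ell}$ and twist it by the inverse cyclotomic character, $\rho'_{k,\ell}=\rho_{k,\ell}(1)$, so as to remove the factor $p$ common to all four formulas.

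For $k=3,4,6$, $\rho'_{k,\ell}$ is two-dimensional. We pin down its determinant and Hodge--Tate weights from the self-duality and the Hodge numbers: the weights should be $\{0,k-1\}$ (weight $k-1$ of the eigenform, i.e.\ Hodge--Tate weights $0$ and $k-2$ after normalization), and the determinant should be $\chi_{\cyc}^{k-1}$ times a character which is read off from the twist by $\qt$ and from the conductor (trivial for $k=4,6$, and $\bigl(\frac{-3}{\cdot}\bigr)$ or $\bigl(\frac{-1}{\cdot}\bigr)$-type for $k=3$). The representation is odd and irreducible, and it is crystalline outside the small primes by Theorem \ref{thm:GR}. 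We can then proceed in either of two ways. The first uses potential automorphy together with the fact that the $L$-function of a two-dimensional odd Hodge-type representation with these invariants is that of a newform, obtained via Khare--Wintenberger applied to the residual representation plus modularity lifting. The second, which is more concrete, uses the Faltings--Serre method: take $\ell=2$ or $3$, check that the residual representations of $\rho'_{k,\ell}$ and of $\rho_{f,\ell}$ agree (for instance by showing both have image determined by small-conductor extensions of $\QQ$ unramified outside $\{2,3\}$ or $\{2,3,5\}$), and then compare traces at a finite explicit set of primes $p$ computed directly from $\tm_k(p)$. For $k=3$ the CM form of level 48 can alternatively be handled by showing that $\rho'_{3,\ell}$ restricted to $\Gal(\QQbar/\QQ(\sqrt{-3}))$ is reducible, using vanishing of traces at inert primes, and identifying the Hecke character. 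The identity must hold for the small primes excluded by Theorem \ref{thm:GR} as well ($p=5$ for $k=3$, $p=3$ for $k=4$, $p=5$ for $k=6$). At those primes we verify it by direct computation of $\tm_k(p)$.

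For $k=5$, $d=3$ and the formula says $\rho'_{5,\ell}$ should be $\bigl(\frac{-1}{\cdot}\bigr)\otimes\Sym^2$ of the weight $3$ representation of $f_{\text{60.3.b.a}}$ twisted by the appropriate power of cyclotomic, minus the contribution $\bigl(\frac{15}{\cdot}\bigr)p^3$. The correction term indicates that the symmetric square, as an $\mathrm{Ad}$-type object, splits into a $2$-dimensional piece plus a character. Concretely, the three-dimensional self-dual representation $\rho_{5,\ell}$ preserves an orthogonal form, so it factors through $\mathrm{GO}_3\cong\GL_2/\mathbb{G}_m$-twists. We lift it to a $\GL_2$-valued representation by Tate's lifting theorem and identify that lift with a twist of $\rho_{f,\ell}$ by the same two-dimensional modularity argument as above. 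We then match traces using $\tr\Sym^2 = a^2-\det$. The term $\bigl(\frac{15}p\bigr)p^3$ should come out as $\det\rho_f(\Frob_p)\cdot p$ twisted by the nebentypus character of conductor $15$ or $60$.

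The main obstacle is the residual/Faltings--Serre step. One has to bound ramification at $2,3,5$ precisely enough, using the local information from the geometric model behind Theorem \ref{thm:GR}, to make the comparison a finite computation, and one also has to get the exact determinant and nebentypus characters right, especially for $k=5$. We would first try to compute the conductor exponents at $2$ and $3$ from the local monodromy of the twisted Kloosterman sheaf, and then run the Serre--Livné criterion with $\ell=2$.
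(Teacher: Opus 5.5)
Your main line is the paper's route: twist by $\chi_{\cyc}$, apply Serre's conjecture to the rank-two representations for $k=3,4,6$, and for $k=5$ pass through $\SO(V_{5,\ell})\cong\PGL_2$ and lift to $\GL_2$ as in Yun. Your Livn\'e/Faltings--Serre variant is a legitimate alternative for $k=3,4,6$. It needs only the ramification sets $\{2,3\}$, $\{2\}$, $\{2,3\}$ from Theorem~\ref{thm:GR} and the exact determinants, not conductor exponents, plus a check that the $2$-adic residual images are $2$-groups.

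For the Serre-conjecture route, however, the step that makes the identification finite is missing, and your plan for it cannot work. The local monodromy in \S\ref{sect:twistedFW} is at the points $0,\infty$ of $\Gm$ over $\FF_p$ for a fixed odd $p$. The Picard--Lefschetz analysis behind Propositions~\ref{oddrep} and~\ref{evenrep} is also only for odd $p$. Neither says anything about the (wild) ramification of $\rho_{k,\ell}$ at the prime $2$. The paper instead uses Serre's a priori bound: the exponent of $2$ in the conductor of a two-dimensional representation is at most $8$. With tameness at the odd bad primes this gives levels $2^c\cdot 3$, $2^c$, $2^c\cdot 3$ with $c\le 8$. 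An \LMFDB\ search (8, 22, 48 candidates) is then cut down by one or two values of $\tm_k(p)$. You also need residual irreducibility for infinitely many $\ell$, which you do not address. For $k=3$ the paper gets CM modularity directly from Livn\'e's theorem. The reason is that $\rho_{3,\ell}$ is orthogonal, and its determinant $\chi_{-3}\chi_{\cyc}^{-4}$ differs from the similitude $\chi_{\cyc}^{-4}$ by the component character $\chi_{-3}$ of $\mathrm{GO}_2$. Vanishing of traces at primes inert in $\QQ(\sqrt{-3})$ is a consequence of this, not an available input. Irreducibility mod $\ell$ then comes from Fontaine's theorem at the $\ell$ with $a_f(\ell)=0$.

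For $k=5$, the claim that the correction term makes $\Sym^2$ ``split into a $2$-dimensional piece plus a character'' is false and inconsistent with $d=3$. Over $\QQbar_\ell$ one has exactly $\rho_{5,\ell}\cong\bigl(\frac{-1}{\cdot}\bigr)\otimes\Sym^2\rho_{f,\ell}\otimes\chi_{\cyc}^{-1}$. The term $\bigl(\frac{15}{p}\bigr)p^3$ is just the $-\det$ in $\tr\Sym^2=a^2-\det$.

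More seriously, $\tm_5(p)$ only determines $a(p)^2/\det$ for a $\GL_2$-lift. So the Serre--Livn\'e comparison at $\ell=2$ that you end with has no traces to compare. A bare Tate lift also has uncontrolled ramification, Hodge--Tate weights and determinant, so Serre's recipe cannot be applied to it either. The paper supplies the missing pieces:
\begin{itemize}
\item the exact determinant $\bigl(\frac{15}{\cdot}\bigr)\chi_{\cyc}^{-9}$ from \eqref{eq:det_V}, to twist into $\SO$;
\item Lemma~\ref{ControlFrob}, which uses $Z_5(3;T)$ and $Z_5(5;T)$ to show the local images at $3,5$ are diagonal;
\item a geometric lift (Patrikis, then Conrad at $\ell$), twisted to be tame at $3,5$ and crystalline with weights $(2,0)$, with determinant $\bigl(\frac{-15}{\cdot}\bigr)\chi_{\cyc}^{-2}$ (oddness excludes $\bigl(\frac{60}{\cdot}\bigr)$);
\item residual irreducibility for $\ell\gg 1$ via $|\tm_5(p)|\le 3p^3$ (Lemma~\ref{lem:abs_irred});
\item Serre's conjecture at infinitely many $\ell$;
\item since only $a_f(p)^2$ is pinned down, Lemma~\ref{lemma:tm5} and a Sturm-bound argument forcing $K_f=\QQ(\sqrt5,i)$ before the numerical match with $f_{\text{60.3.b.a}}$.
\end{itemize}

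Minor points. The primes $p=5$ ($k=3$), $p=3$ ($k=4$) and $p=5$ ($k=6$) are already covered by Theorem~\ref{thm:GR}, which requires $p>k$ for odd $k$ and $p>k/2$ for even $k$. The eigenforms have weight $k$ (Hodge--Tate weights $\{0,k-1\}$), not $k-1$.
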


This confirms Evans's modularity conjectures for $\tm_k(p)$.

\medskip
Theorem \ref{thm:GR}
is a simplified version of the contents of
Propositions \ref{oddrep}, \ref{evenrep} and \ref{prop:p-adicHodge},
and Corollary \ref{cor:pot_auto} below.
In fact, such Galois representations are already constructed
in \cite{FSY}, at least implicitly,
as a direct summand of the \'etale cohomology
of the same hypersurface $\sK$ for $\spm_k(p)$ mentioned previously.
Essentially, the only missing point is to pin down the determinants
of the representations $\rho_{k,\ell}$.
The latter relies on the results obtained in \S \ref{sect:twistedFW},
in which we investigate in detail
the \'etale cohomology associated with $\tm_k(p)$
for fixed $k$ and $p$.
The consideration extends the work \cite{FW10}
for the untwisted case $\spm_k(p)$.
The modularity Theorem \ref{thm:EC} is obtained using similar arguments
as in \cite{Yun}.
\medskip

The rest of the paper is organized as follows.
In \S \ref{sect:twistedFW},
we recall the construction of the Kloosterman sheaf $\Kl_2$ on $\Gm$
in positive characteristic.
We investigate in detail the local structures of the sheaf $\Klt_2$,
which is the pullback of $\Kl_2$ by a double cover of $\Gm$,
and use them to obtain the cohomological properties
for $\tm_k(p)$ for fixed positive $k$ and odd $p$.
The approach and the main results in this section
are based on the work \cite{FW10} in an essential way.
In \S \ref{sect:twistedSFY},
we construct the associated Galois representation for $\tm_k(p)$
for fixed $k$, mainly by extracting the necessary information
from \cite{FSY}, and draw some arithmetic and analytic consequences
from the potential automorphy of the representation.
We then prove Theorem \ref{thm:EC}
relating $\tm_k(p)$ for $k=3,4,5,6$ to modular forms
using Serre's modularity conjecture.

In Appendix \S\ref{sect:Lim},
we indicate that one can obtain the main results in \S \ref{sect:twistedFW}
by direct inspections of the quadratic twists of the symmetric powers
of $\Kl_2$,
instead of going up to $\Klt_2$.
Still, this approach relies on the calculations in \cite{FW10}.
In \S\ref{sect:HCS},
we record the relationship between the twisted Kloosterman sums and hypergeometric type character sums and their modularity.
We record some facts about the associated $L$-functions
in \S\ref{sect:A.L-func}.

\subsubsection*{Acknowledgements} Tu is partially supported by the NSF grant DMS \#2302531. While working on this project, Tu was hosted by the Department of Mathematics at National Taiwan University and the Taida Institute for Mathematical Sciences in the summers of 2024 and 2025.
Yu is grateful for the partial support of the grant NTU-CC-114L891801,
which made the collaboration feasible.

\section{Cohomology for the moments}
\label{sect:twistedFW}

Fix an odd prime $p$ and a positive integer $k$.
We investigate in detail the twisted moments $\tm_k(p)$ cohomologically
in this section.

\subsection{Étale sheaves}
We work over the base field $\FF_p$.
When we need to specify a variable for the torus over $\FF_p$,
we set $\Gmx = \operatorname{Spec}\FF_p[x,x^{-1}]$,
indicating the variable $x$ in the subscript;
otherwise we omit it.
A similar convention applies to the affine line $\AA^1$
and the projective line $\PP^1$.
We recall briefly the construction of the Kloosterman sheaf
and some basic properties that we take for granted.

We fix a rational prime $\ell$, distinct from $p$,
and a nontrivial additive character
\[ \psi\colon \FF_p \longrightarrow \QQbar_\ell^\times. \]
(One can replace the coefficient field $\QQbar_\ell$
by the extension of $\QQ_\ell$
joined with $p$-th roots of 1.)
We extend $\psi$ to a character of any finite field extension of $\FF_p$
by composing with the trace map.
The Artin--Schreier cover
$\AA^1_t\to\AA^1_s$ induced from $s\mapsto t^p-t$
is a Galois cover;
it then corresponds to a quotient
$\pi_1^{\et}(\AA^1_s) \to \Aut(\AA^1_t/\AA^1_s) = \FF_p$
of the \'etale fundamental group of $\AA^1$.
Composing with $\psi^{-1}$
to obtain a character of $\pi_1^{\et}(\AA^1_s)$,
we get the \textit{Artin--Schreier sheaf} $\AS$,
which is a lisse sheaf on $\AA^1$
with Swan conductor $\sw_\infty(\AS) = 1$ at $\infty$.
It is also characterized by the following trace function.
For a closed point $x\in\AA^1(\FF_q) = \FF_q$,
let $\Frob_x$ be a geometric Frobenius at $x$.
Then $\Frob_x|_{\AS} = \psi(x)$.
For a regular function $f: X \to \AA^1$ of $X$ over $\FF_p$,
let $\AS(f)$ be the pull-back $f^*\AS$.
For a geometric Frobenius $\Frob_x$ at $x\in X(\FF_q)$,
we then have $\Frob_x|_{\AS(f)} = \psi(f(x))$.
One has the duality
\begin{equation}\label{eq:AS_duality}
\AS(f)^\vee = \AS(-f).
\end{equation}

Consider the diagram
\[ \Gmz \overset{\pi}{\longleftarrow} \Gmz\times\Gmx
\overset{f}{\longrightarrow} \AA^1_s \]
where $\pi$ is the projection to the first component and $f$ is induced by
$s\mapsto x+zx^{-1}$.
The \textit{Kloosterman sheaf} is the lisse sheaf $\Kl_2$ of rank two
on $\Gm$ over $\FF_p$
defined as
\[ \Kl_2 = \bR^1\pi_!\AS(f) = \bR^1\pi_*\AS(f). \]
It is pure of weight one.
For a closed point $a\in \Gm(\FF_q) = \FF_q^\times$, one has
$\tr(\Frob_a\mid\Kl_2) = -\Kl_2(q;a)$,
the negative of the Kloosterman sum valued in $\QQbar_\ell$,
instead of $\CC$,
as defined in \eqref{eq:Kl_sum}.
Since the automorphism
$(z,x) \mapsto (-z,-x)$ on $\Gmz\times\Gmx$
sends $f$ to $-f$,
the duality \eqref{eq:AS_duality}
induces a perfect alternating pairing
\begin{equation}\label{eq:Kl_duality}
\Kl_2 \otimes \Kl_2 \longrightarrow \bR^2\pi_!\QQbar_\ell = \QQbar_\ell(-1).
\end{equation}

On the other hand,
let
\[ [2]\colon \Gmt \longrightarrow \Gmz \]
be the double cover induced by $z\mapsto t^2$.
It yields a surjection
$\pi_1^{\et}(\Gmz) \to \Aut(\Gmt/\Gmz) = \{\pm 1\}$.
Composing with the nontrivial character
$\{\pm 1\} \subset \QQbar_\ell^\times$,
we obtain a lisse sheaf $\qt$ on $\Gm$.
One has
\[ [2]_*\QQbar_\ell = \QQbar_\ell\oplus\qt. \]

Let $\Klt_2 = [2]^*\Kl_2$
and
\[ \sVt_k = \Sym^k\Klt_2,
\quad
\sV^+_k = \Sym^k\Kl_2,
\quad
\sV_k = \qt\otimes\Sym^k\Kl_2, \]
the $k$-th symmstric power of $\Klt_2$ and $\Kl_2$
and its quadratic twist, respectively.
So for $a\in\Gm(\FF_q) = \FF_q^\times$, one has
$\tr(\Frob_a\mid \sV^+_k) = \Kl_2^{\Sym^k}(q;a)$,
the symmetric power of $\Kl_2(q;a)$,
and similarly for $\sVt_k$ and $\sV_k$.
There are $(-1)^k$-symmetric perfect pairings
\begin{equation}\label{eq:duality_sheaf}
\sF\otimes\sF \longrightarrow \QQbar_\ell(-k),
\quad
\sF = \sVt_k, \sV^+_k, \sV_k
\end{equation}
inherited from \eqref{eq:Kl_duality}
and the self-duality of $\qt$.
We have
\begin{equation}\label{eq:sheafV_relation}
[2]^*\sV^+_k = \sVt_k,
\quad
[2]_*\sVt_k = \sV^+_k \oplus \sV_k;
\end{equation}
the letter is the sheaf avatar of \eqref{eq:moments_relation}.
By the Lefschetz trace formula,
the twisted moment $\tm_k(p)$
is determined by the Frobenius action
on the \'etale cohomology with compact support
$\coH_{\et,\rc}^i(\Gpb,\sV_k)$.
For $\sF = \sVt_k, \sV^+_k, \sV_k$,
there is the duality pairing
\begin{equation}\label{eq:cempty_duality}
\coH_{\et,\rc}^i(\Gpb,\sF) \otimes \coH_{\et}^{2-i}(\Gpb,\sF)
\longrightarrow \coH_{\et,\rc}^2(\Gpb, \QQbar_\ell(-k)) = \QQbar_\ell(-k-1)
\end{equation}
induced by \eqref{eq:duality_sheaf}.
We also consider the \textit{middle part} of the cohomology
\[ \coH_{\et,\rmid}^i(\Gpb,\sF)
= \operatorname{image}\big\{
\coH_{\et,\rc}^i(\Gpb,\sF) \longrightarrow \coH_{\et}^i(\Gpb,\sF) \big\} \]
where the arrow is the forgetting-support map.
It inherits the $(-1)^{k+1}$-symmetric perfect pairing
\begin{equation}\label{eq:mid_duality}
\coH_{\et,\rmid}^1(\Gpb,\sF) \otimes \coH_{\et,\rmid}^1(\Gpb,\sF)
\longrightarrow \QQbar_\ell(-k-1).
\end{equation}
In fact, letting $j\colon \Gm \to \PP^1$ be the inclusion,
one has the identity
\begin{equation}\label{eq:mid_j*}
\coH_{\et,\rmid}^1(\Gpb,\sF)
= \coH_{\et}^1(\PP^1_{\FFbar_p},j_*\sF),
\quad
\sF = \sVt_k, \sV^+_k, \sV_k.
\end{equation}
As $\Gal(\FFbar_p/\FF_p)$-modules,
these spaces are pure of weight $k+1$.

We record the cohomological properties for $\sV^+_k$
obtained in \cite{FW05, FW10} in the following proposition.
Let $I_0$ and $I_\infty$ be the inertia groups of $\pi_1^{\et}(\Gm)$
at the boundary points $0,\infty\in\PP^1\setminus\Gm$, respectively,
acting on $\sVt_k, \sV^+_k, \sV_k$.
Let $F_p\in\Gal(\FFbar_p/\FF_p)$
be the geometric Frobenius automorphism.

\begin{prop}\label{prop:FW}
As $\Gal(\FFbar_p/\FF_p)$-modules,
there is the exact sequence
\[ 0 \longrightarrow (\sV_k^+)^{I_0}\oplus (\sV_k^+)^{I_\infty}
\longrightarrow \coH_{\et,\rc}^1(\Gpb,\sV^+_k)
\longrightarrow \coH_{\et,\rmid}^1(\Gpb,\sV^+_k)
\longrightarrow 0. \]
The space $(\sV_k^+)^{I_0}$ is of dimension one,
upon which $\Gal(\FFbar_p/\FF_p)$ acts trivially.
The following properties hold.
\begin{enumerate}
\item
Suppose $k$ is odd.
Then $(\sV_k^+)^{I_\infty} = 0$
and
\begin{equation}\label{eq:FWdet_odd}
\det\big(F_p\mid\coH_{\et,\rmid}^1(\Gpb,\sV^+_k)\big)
= p^{\frac{k+1}{2}\delta^+}
\left(\frac{-2}{p}\right)^{\flr{\frac{k}{2p}+\frac{1}{2}}}
\prod_{\substack{0\leq j\leq (k-1)/2\\ p\nmid (2j+1)}} \left(\frac{(-1)^j(2j+1)}{p}\right)
\end{equation}
where
\[ \delta^+ = \dim \coH_{\et,\rmid}^1(\Gpb,\sV^+_k)
= \frac{k-1}{2}-\flr{\frac{k}{2p}+\frac{1}{2}}. \]
\item
Suppose $k$ is even.
\begin{enumerate}
\item
$\Gal(\FFbar_p/\FF_p)$ acts on $(\sV_k^+)^{I_\infty}$
semisimply and
\[ \det(1-F_pT \mid (\sV_k^+)^{I_\infty})
= (1-p^{k/2}T)^{\delta_{4\ZZ}(k)+n_k^+(p)}
(1-(-1)^{(p-1)/2}p^{k/2}T)^{n_k^-(p)}
\]
with
\begin{equation}\label{eq:n_k}
n_k^+(p) = \flr{\frac{k}{4p}},
\quad
n_k^-(p) = \flr{\frac{k}{4p}+\frac{1}{2}}.
\end{equation}
\item
We have
$\dim \coH_{\et,\rc}^1(\Gpb,\sV^+_k) = \frac{k}{2}-\flr{\frac{k}{2p}}$.
\item
One has
\[ \det\big(F_p\mid\coH_{\et,\rmid}^1(\Gpb,\sV^+_k)\big)
= p^{\frac{k+1}{2}\delta^+} \]
where
\[ \delta^+ = \dim \coH_{\et,\rmid}^1(\Gpb,\sV^+_k)
= \frac{k}{2}-2\flr{\frac{k}{2p}} - \delta_{4\ZZ}(k). \]
\end{enumerate}
\end{enumerate}
\end{prop}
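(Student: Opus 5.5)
The proposition gathers results from \cite{FW05, FW10}, so the plan is to reduce each assertion to the local monodromy of $\Kl_2$ at $0$ and $\infty$ and then invoke the relevant computation there. The argument proceeds in four steps.

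\emph{The exact sequence.} Apply $\coH_{\et}^*(\PP^1_{\FFbar_p},-)$ to the short exact sequence
\[ 0\to j_!\sV^+_k\to j_*\sV^+_k\to i_{0*}(\sV^+_k)^{I_0}\oplus i_{\infty*}(\sV^+_k)^{I_\infty}\to 0. \]
Since $\sV^+_k$ is geometrically irreducible of rank $k+1\ge 2$, we have $\coH^0_{\et}(\Gpb,\sV^+_k)=0$. Together with \eqref{eq:mid_j*}, the long exact sequence then gives the displayed sequence.

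\emph{Local monodromy at $0$ and $\infty$.} At $0$, the inertia $I_0$ acts on $\Kl_2$ unipotently through a single Jordan block of size $2$. Hence it acts on $\Sym^k$ through a single Jordan block of size $k+1$, so the invariants are one-dimensional. This line is the image of $\Sym^k$ of the invariant line of $\Kl_2$ at $0$, on which Frobenius acts trivially (weight $0$). At $\infty$, the representation $\Kl_2|_{I_\infty}$ is induced from the wild character of the quadratic extension, which is totally wild of Swan conductor $1/2$. The invariants of $\Sym^k$ are then computed as in \cite{FW10}:
\begin{itemize}
\item for $k$ odd they vanish;
\item for $k$ even, their dimension and the Frobenius eigenvalues $\pm p^{k/2}$, with the sign given by the Legendre symbol of $-1$, are exactly the counts $n_k^\pm(p)$ and the term $\delta_{4\ZZ}(k)$ of \eqref{eq:n_k}.
\end{itemize}
This is a character-theoretic computation over $\FF_p$ in which the special behaviour at multiples of $p$ produces the floor functions.

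\emph{Dimensions.} Compute $\dim\coH^1_{\et,\rc}$ via Grothendieck--Ogg--Shafarevich, namely $-\chi_c=\sw_\infty(\sV^+_k)$, where the Swan conductor is obtained from the breaks $1/2$ of $\Kl_2$ at $\infty$. This gives $\frac{k+1}{2}-\flr{\frac{k}{2p}+\frac12}$ for $k$ odd and $\frac k2-\flr{\frac k{2p}}$ for $k$ even. Subtracting the local invariant dimensions from this yields $\delta^+$ in each case.

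\emph{Determinants (the main obstacle).} For $k$ even, the pairing \eqref{eq:mid_duality} is alternating, so the Frobenius action lies in $\mathrm{GSp}$ with multiplier $p^{k+1}$. Its determinant is therefore $p^{(k+1)\delta^+/2}$, and this case is immediate. For $k$ odd, the pairing is symmetric, so $\det=\pm p^{(k+1)\delta^+/2}$ and the sign has to be found. I would take the sign from \cite{FW10}. Their method computes $\det(F_p\mid\coH^1_{\et,\rc})$ through the product formula for epsilon factors (Laumon), $\det(-F_p\mid R\Gamma_c)=\prod_x\varepsilon_x$. The local epsilon factor at $\infty$ of $\Sym^k$ of the induced wild representation decomposes into Gauss-sum factors indexed by $j$ with $p\nmid 2j+1$, which give the Legendre symbols $\bigl(\frac{(-1)^j(2j+1)}p\bigr)$. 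The indices with $p\mid 2j+1$ contribute the $\bigl(\frac{-2}{p}\bigr)$ powers. One then removes the contribution of the trivial $I_0$-invariant line. Getting the quadratic Gauss-sum signs right at this step is the delicate part, and here I would rely on the explicit local computation in \cite{FW10}.
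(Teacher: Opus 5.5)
\paragraph{Overall.} Your outline follows the paper's route. The proposition is quoted from \cite{FW05, FW10}, and \S\ref{sect:twistedFW} of the paper reruns the same machinery for the twisted sheaf: the $j_!\to j_*$ sequence, the local structures of \cite[Lemmas 1.2, 2.4]{FW10}, Grothendieck--Ogg--Shafarevich, the symplectic/orthogonal dichotomy, and Laumon's product formula for the sign. Deferring the odd-$k$ sign to \cite{FW10} is exactly what the paper does.

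\paragraph{The even-$k$ dimension count.} One step is asserted wrongly: for $k$ even, subtracting the local invariants does \emph{not} give the displayed $\delta^+$. Your own inputs are
\begin{itemize}
\item $\dim\coH^1_{\et,\rc}=\frac k2-\flr{\frac k{2p}}$;
\item $\dim(\sV_k^+)^{I_0}=1$;
\item $\dim(\sV_k^+)^{I_\infty}=\delta_{4\ZZ}(k)+n_k^+(p)+n_k^-(p)=\delta_{4\ZZ}(k)+\flr{\frac k{2p}}$, using $\flr{x}+\flr{x+\frac12}=\flr{2x}$.
\end{itemize}
The exact sequence then forces
\[ \dim\coH^1_{\et,\rmid}(\Gpb,\sV_k^+)=\frac k2-1-2\flr{\frac k{2p}}-\delta_{4\ZZ}(k), \]
which is one less than the formula in (ii)(c).

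The statement as displayed is in fact internally inconsistent, as the small cases show.
\begin{itemize}
\item For $k=2$, part (ii)(b) gives $\dim\coH^1_{\et,\rc}=1$. That line is already the $I_0$-invariants, consistent with $\spm_2(p)=-1$. So $\coH^1_{\et,\rmid}=0$, not $1$.
\item For $k=4$ and $p\geq 3$, the $I_0$-line and the $I_\infty$-line exhaust $\coH^1_{\et,\rc}$, consistent with $\spm_4(p)=-1-p^2$. So $\delta^+=0$, not $1$.
\end{itemize}
The displayed value appears to have been carried over from Theorem~\ref{thm:zeta_at_p}(ii)(c), where $\sV_k^{I_0}=0$ and no $-1$ occurs.

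You should therefore state and prove the corrected $\delta^+$ rather than claim agreement. Your symplectic argument then gives $\det=p^{\frac{k+1}{2}\delta^+}$ with the corrected value. Nothing downstream in the paper is affected, since only the odd case \eqref{eq:FWdet_odd} is used later.

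\paragraph{Two smaller imprecisions.}
\begin{itemize}
\item The breaks $1/2$ of $\Kl_2$ at $\infty$ do not by themselves produce the floor terms in $\sw_\infty(\sV_k^+)$. You need the decomposition of \cite[Lemma 2.4]{FW10} into the pieces $[2]_*\cK_i\otimes\cF_{i+1}$ (resp.\ $[2]_*\cK_i^+\otimes\cF_i$) recalled in the proof of Lemma~\ref{lem:sVt_infty}. There, the summands with $p\mid k-2i$ are tame and contribute no Swan conductor.
\item ``Weight $0$'' does not force the Frobenius eigenvalue on $(\sV_k^+)^{I_0}$ to equal $1$. That fact comes from the explicit basis in \cite[Lemma 1.2]{FW10}.
\end{itemize}
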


The rest of this section is devoted to proving the following theorem.
By the decomposition in \eqref{eq:sheafV_relation},
we have
\begin{equation}\label{eq:cohV_relation}
\coH_{\et,?}^i(\Gpb,\sVt_k) =
\coH_{\et,?}^i(\Gpb,\sV^+_k) \oplus \coH_{\et,?}^i(\Gpb,\sV_k),
\quad
? = \emptyset,\rc,\rmid.
\end{equation}
What we do below is indeed to obtain the corresponding results for $\sVt_k$
based on the work of Fu and Wan,
thanks to the equalities in \eqref{eq:sheafV_relation}.

\begin{thm}\label{thm:zeta_at_p}
\begin{enumerate}
\item
Suppose $k$ is odd.
We have the canonical isomorphisms
\[ \coH_{\et,\rc}^1(\Gpb,\sV_k) \overset{\sim}{\longrightarrow}
\coH_{\et,\rmid}^1(\Gpb,\sV_k) \overset{\sim}{\longrightarrow}
\coH_{\et}^1(\Gpb,\sV_k). \]
These spaces are of dimension
$\delta=\frac{k+1}{2}-\flr{\frac{k}{2p}+\frac{1}{2}}$.
Moreover, one has
\begin{equation}\label{eq:det_odd}
\det\big(F_p\mid\coH_{\et,\rmid}^1(\Gpb,\sV_k)\big)
= p^{\frac{k+1}{2}\delta}
\left(\frac{-1}{p}\right)^\delta
\left(\frac{2}{p}\right)^{\flr{\frac{k}{2p}+\frac{1}{2}}}
\prod_{\substack{0\leq j\leq (k-1)/2\\ p\nmid (2j+1)}} \left(\frac{(-1)^j(2j+1)}{p}\right).
\end{equation}
\item
Suppose $k$ is even.
As $\Gal(\FFbar_p/\FF_p)$-modules,
there is the exact sequence
\[ 0 \longrightarrow \sV_k^{I_\infty}
\longrightarrow \coH_{\et,\rc}^1(\Gpb,\sV_k)
\longrightarrow \coH_{\et,\rmid}^1(\Gpb,\sV_k)
\longrightarrow 0. \]
The following properties hold.
\begin{enumerate}
\item
$\Gal(\FFbar_p/\FF_p)$ acts on $\sV_k^{I_\infty}$
semisimply and
\[ \det(1-F_pT \mid \sV_k^{I_\infty})
= (1-p^{k/2}T)^{\delta_{2+4\ZZ}(k)+n_k^+(p)}
(1-(-1)^{(p-1)/2}p^{k/2}T)^{n_k^-(p)}
\]
where $n_k^+(p), n_k^-(p)$ are defined in \eqref{eq:n_k}.
\item
We have
$\dim \coH_{\et,\rc}^1(\Gpb,\sV_k) = \frac{k}{2}-\flr{\frac{k}{2p}}$.
\item
One has
\[ \det\big(F_p\mid\coH_{\et,\rmid}^1(\Gpb,\sV_k)\big)
= p^{\frac{k+1}{2}\delta} \]
where
\[ \delta = \dim \coH_{\et,\rmid}^1(\Gpb,\sV_k)
= \frac{k}{2}-2\flr{\frac{k}{2p}} - \delta_{2+4\ZZ}(k). \]
\end{enumerate}
\end{enumerate}
\end{thm}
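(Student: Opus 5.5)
The idea is to work upstairs on $\Gmt$ with $\sVt_k=[2]^*\sV^+_k$, and then subtract the known contribution of $\sV^+_k$ using the splitting \eqref{eq:cohV_relation}.

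\textbf{Local invariants and the exact sequences.} First I would determine the local monodromy of $\Klt_2$ at $0$ and $\infty$. At $0$, $\Kl_2$ is tamely ramified with unipotent monodromy given by a single Jordan block. Pulling back by $z=t^2$ squares the monodromy, so it is still unipotent with one block. Hence $\sVt_k^{I_0}$ is one-dimensional with trivial Frobenius action, the same as for $\sV^+_k$. Since $(\sV^+_k)^{I_0}$ already accounts for it, this gives $\sV_k^{I_0}=0$.

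At $\infty$, $\Kl_2$ is the induction from a ramified quadratic extension of the Artin--Schreier character of $2\sqrt{z}$ (twisted by a quadratic character, for $p$ odd). After pulling back by $t\mapsto t^2$ it becomes $\AS(2t)\otimes\chi\oplus\AS(-2t)\otimes\chi'$ as an $I_\infty$-representation, up to a tame character. Then $\Sym^k$ decomposes as $\bigoplus_{i}\AS(2(k-2i)t)\otimes(\text{tame})$. The invariants come from the indices with $p\mid(k-2i)$ whose tame part is trivial. For odd $k$, the summands with $k-2i=0$ are absent, and I would have to check that the tame characters kill the invariants when $p\mid k-2i$. For even $k$, I would count the indices $i$ with $k-2i\equiv0\pmod p$ and organize them into $\pm$ Frobenius eigenvalues $p^{k/2}$ and $(-1)^{(p-1)/2}p^{k/2}$, exactly as Fu--Wan do.

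The cohomology $\coH_{\rc}^1$ modulo its middle part is the sum of the local invariants at $0$ and $\infty$ upstairs. The $[2]_*$ decomposition splits $\sVt_k^{I_\infty}$ into its $\sV^+_k$ and $\sV_k$ parts, according to the action of the deck involution $t\mapsto -t$. Subtracting Proposition \ref{prop:FW}(2a) then yields (2a); the $\delta_{4\ZZ}$ versus $\delta_{2+4\ZZ}$ swap comes from the sign of the involution on the weight-$k/2$ invariant line. For odd $k$, the vanishing of both local invariants gives the isomorphisms $\coH_{\rc}^1\cong\coH_{\rmid}^1\cong\coH^1$, using duality \eqref{eq:cempty_duality} for the second one.

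\textbf{Dimensions.} I would compute $\dim\coH^1_{\rc}(\Gpb,\sVt_k)$ by Grothendieck--Ogg--Shafarevich, using $\coH^0_{\rc}=\coH^2_{\rc}=0$. The Swan conductor at $\infty$ equals the number of $i$ with $p\nmid(k-2i)$, and the Euler characteristic of $\Gm$ is $0$. Subtracting the dimension of $\coH^1_{\rc}(\Gpb,\sV^+_k)$ from Proposition \ref{prop:FW} gives $\dim\coH^1_{\rc}(\Gpb,\sV_k)$. The middle dimension $\delta$ then follows from the exact sequences. This is routine floor-function bookkeeping.

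\textbf{Determinants (main obstacle).} For even $k$, the middle cohomology carries a perfect alternating pairing to $\QQbar_\ell(-k-1)$ by \eqref{eq:mid_duality}. Frobenius therefore lies in $\mathrm{GSp}$ with multiplier $p^{k+1}$, which forces $\det=p^{\frac{k+1}{2}\delta}$.

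For odd $k$, the pairing is symmetric, so $\det=\pm p^{\frac{k+1}{2}\delta}$ and the sign is the real content of the statement. My plan is to compute $\det(F_p\mid\coH^1_{\rc}(\Gpb,\sVt_k))$ upstairs and divide by \eqref{eq:FWdet_odd}. The upstairs determinant can be obtained either by rerunning the Fu--Wan method (the $\epsilon$-factor product formula of Laumon) or via the relation $\spmt_k=\spm_k+\tm_k$. Concretely, the determinant of $\coH^1_{\rc}$ of a lisse sheaf on $\Gm$ is expressed by the product formula as $\epsilon$-factors at $0$ and $\infty$ times $\det(\sVt_k)$ evaluated at a canonical divisor. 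At $0$ the sheaf is unipotent, so the local factor there is easy. At $\infty$, the $\epsilon$-factors of $\AS(2(k-2i)t)$ are Gauss-sum–type quantities whose product yields the Legendre symbols $\bigl(\frac{(-1)^j(2j+1)}{p}\bigr)$.

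Comparing with Fu--Wan's computation for $\sV^+_k$, which involves the quadratic-extension induction, the two should differ by exactly the factors $\bigl(\frac{-1}{p}\bigr)^{\delta}\bigl(\frac{2}{p}\bigr)^{\flr{k/2p+1/2}}$ versus $\bigl(\frac{-2}{p}\bigr)^{\flr{k/2p+1/2}}$. The bulk of the work is carefully tracking the tame characters and the $\epsilon$-factor of $\qt$ at $0$ and $\infty$ (a quadratic Gauss sum $g$ with $g^2=\bigl(\frac{-1}{p}\bigr)p$).
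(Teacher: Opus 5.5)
Your proposal follows essentially the paper's own argument: pass to $\sVt_k=[2]^*\sV^+_k$, pull back Fu--Wan's local descriptions at $\nullt,\inftyt$, and get the dimensions and exact sequences from Grothendieck--Ogg--Shafarevich and \eqref{eq:cohV_relation}; then use the alternating pairing for even $k$, and for odd $k$ compute the upstairs determinant by Laumon's product formula and divide by \eqref{eq:FWdet_odd}. One correction to your expectation in that last step: with a $(t\mapsto -t)$-invariant form such as $\omega=\de t^2$, the local $\epsilon$-factors at $\inftyt$ occur squared, so the symbols $\bigl(\frac{(-1)^j(2j+1)}{p}\bigr)$ do \emph{not} come out of the upstairs computation (one gets $\tilde c=\det(-F_p\mid\coH^1_{\et,\rmid}(\Gpb,\sVt_k))=\bigl(\frac{-1}{p}\bigr)^{(k+1)/2}(-p^{(k+1)/2})^{k-2\flr{\frac{k}{2p}+\frac12}}$), and they enter the formula for $\sV_k$ only through the division by Fu--Wan's value.
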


Let
\begin{align}
\label{eq:localfactor_Z}
Z_k(p;T)
&= \det\big(1-F_pT\mid\coH_{\et,\rc}^1(\Gpb,\sV_k)\big), \\
\nonumber
M_k(p;T)
&= \det\big(1-F_pT\mid\coH_{\et,\rmid}^1(\Gpb,\sV_k)\big).
\end{align}
By Lemma \ref{lem:coh_vanish} below,
one has
$\tm_k(p) = Z'_k(p;0)$.
Due to the self-duality \eqref{eq:mid_duality},
we have the functional equation
\[ M_k(p;T) = c T^\delta M_k(p;(p^{k+1}T)^{-1}) \]
where
$\delta = \deg M_k(p;T)$, and
\begin{equation}\label{eq:c=det}
c =
\det\big(-F_p\mid\coH_{\et,\rmid}^1(\Gpb,\sV_k)\big)
= (-1)^\delta\det\big( F_p\mid\coH_{\et,\rmid}^1(\Gpb,\sV_k)\big).
\end{equation}

\subsection{Local structures}
We investigate the local structures of $\sVt_k$
by pulling back the results obtained in \cite{FW10}
via \eqref{eq:sheafV_relation}.

Let $j\colon \AA^1 \to \PP^1$ be the smooth compactification over $\FF_p$.
For a closed point $x\in|\PP^1|$,
we let $\PP^1_{(x)}$ be the henselization at $x$,
and let $\eta_x$ be its generic point.
Choose a geometric point $\bar{\eta}_x$ above $\eta_x$.
Let $I_x$ be the inertia subgroup of the Galois group
$\Gal(\bar{\eta}_x/\eta_x)$.

At the point $0$,
let $t_\ell\colon I_0 \to \QQ_\ell(1) = \varprojlim \mu_{\ell^n}(\QQbar_\ell)$
be the $\ell$-adic fundamental tame character given by
\[ t_\ell(\sigma) = \varprojlim \frac{\sigma(\sqrt[\ell^n]{z})}{\sqrt[\ell^n]{z}}. \]
Here, $\sqrt[\ell^n]{z}$ is any choice of $\ell^n$-th root of $z$
on $\bar{\eta}_0$.
Fix a lifting $\widetilde{F}_p \in \Gal(\bar{\eta}_0/\eta_0)$
of the geometric Frobenius $F_p\in\Gal(\FFbar_p/\FF_p)$.

To distinguish the considerations of the sheaves $\sV^+_k$ and $\sVt_k$,
we let $\eta_\nullt$ be the corresponding point above $\eta_0$
under the double cover $\PP^1_t \to \PP^1_z$
with a geometric point $\bar{\eta}_\nullt$ above $\eta_\nullt$.
We have the inertia subgroup $I_\nullt$ and an element $\widetilde{F}_p$
of $\Gal(\bar{\eta}_\nullt/\eta_\nullt)$
mapping to the geometric Frobenius.

\begin{lem}\label{lem:sVt_0}
There exists a basis $\{\tilde{f}_i\}_{i=0}^k$
of $\sVt_k|_{\eta_\nullt}$
such that
$\widetilde{F}_p(\tilde{f}_i) = p^i\tilde{f}_i$
and, for $\sigma\in I_\nullt$,
\[ \sigma(\tilde{f}_i) = \exp(t_\ell(\sigma)\rN)\tilde{f}_i,
\quad
\text{with $\rN(\tilde{f}_i) = \tilde{f}_{i-1}$}. \]
Here one puts $\tilde{f}_{-1} = 0$.
In particular,
$\sVt_k|_{\eta_\nullt}$
is tamely ramified,
$\sVt_k^{I_\nullt} = \langle \tilde{f}_0\rangle$,
and $F_p|_{\sVt_k^{I_\nullt}} = 1$.
\end{lem}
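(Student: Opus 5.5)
The plan is to pull back the known local structure of $\Kl_2$ at $0$ along the double cover $[2]$ and then take symmetric powers. We use the standard description of $\Kl_2$ at $0$ from \cite{FW05, FW10}, going back to Deligne and Katz. There is a basis $\{e_0,e_1\}$ of $\Kl_2|_{\eta_0}$ with $\widetilde{F}_p(e_i)=p^ie_i$. Inertia $I_0$ acts unipotently via $\sigma(e_i)=\exp(t_\ell(\sigma)\rN_0)e_i$, where $\rN_0 e_1=e_0$ and $\rN_0 e_0=0$. In particular $\Kl_2$ is tamely ramified at $0$ with a single Jordan block of size two. Indeed, $\sV^+_k|_{\eta_0}$ has the structure stated in Proposition \ref{prop:FW}, and its $I_0$-invariants are one-dimensional with trivial Frobenius, which is consistent with this.

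First I will compare the local data upstairs and downstairs. The cover $z=t^2$ induces an inclusion $\Gal(\bar\eta_\nullt/\eta_\nullt)\hookrightarrow\Gal(\bar\eta_0/\eta_0)$ of index $2$, with $I_\nullt\subset I_0$ of index $2$. The lift $\widetilde{F}_p$ upstairs maps to a lift of $F_p$ downstairs; since $0$ and $\nullt$ are both $\FF_p$-rational, the residue field does not change. The tame characters are related by $t_\ell(\sigma)=\varprojlim \sigma(\sqrt[\ell^n]{t^2})/\sqrt[\ell^n]{t^2}$, which equals $2\,t_\ell^{(\nullt)}(\sigma)$ for $\sigma\in I_\nullt$. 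Here $t_\ell^{(\nullt)}$ is the tame character defined using roots of $t$; $\ell$ is odd or this is immaterial. Hence on $\Klt_2|_{\eta_\nullt}=\Kl_2|_{\eta_0}$ restricted, we get $\sigma(e_i)=\exp(t^{(\nullt)}_\ell(\sigma)\cdot 2\rN_0)e_i$. Renormalising, we set $e_0'=2e_0$ and $e_1'=e_1$. The monodromy is then $\rN e_1'=e_0'$, and the Frobenius eigenvalues are unchanged.

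Next I will pass to $\Sym^k$. I put $\tilde f_i=c_i\, (e_0')^{k-i}(e_1')^{i}$ for suitable constants $c_i$. Frobenius acts on $(e_0')^{k-i}(e_1')^i$ by $p^i$. The induced nilpotent operator is the derivation $\rN((e_0')^{k-i}(e_1')^i)=i\,(e_0')^{k-i+1}(e_1')^{i-1}$. Choosing $c_i=1/i!$ gives $\rN\tilde f_i=\tilde f_{i-1}$, which is legitimate because we are in characteristic zero coefficients. The exponential action follows because $\Sym^k$ of $\exp(t\rN)$ is $\exp(t\,\Sym^k\rN)$ with derivation action. The consequences then follow formally. $\rN$ is a single nilpotent Jordan block, so $\ker\rN=\langle\tilde f_0\rangle$, and since $\exp(t\rN)$ is unipotent the invariants equal $\ker \rN$ (take $\sigma$ with $t_\ell(\sigma)\neq0$). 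Tameness holds because the action factors through $t_\ell$. Finally, $\widetilde F_p\tilde f_0=\tilde f_0$.

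The main obstacle is the first step: pinning down the precise structure of $\Kl_2$ at $0$, including the Frobenius eigenvalues $1,p$ on the chosen basis and the compatibility of the lift $\widetilde F_p$ with $\rN$ ($\widetilde F_p\rN\widetilde F_p^{-1}=p^{-1}\rN$). I would cite this from \cite{FW10}, where it is derived from Deligne's computation of the unipotent local monodromy of $\Kl_2$. The only other care needed is the bookkeeping of the index-$2$ restriction of inertia and the tame character. The factor $2$ appearing there is harmless, since it is absorbed into the basis.
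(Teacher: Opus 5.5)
Your proposal is correct and follows essentially the same route as the paper, which simply invokes \cite[Lemma 1.2]{FW10} for $\Sym^k\Kl_2$ at $0$ and pulls that basis back along $[2]$. The only differences are in how much you make explicit. You start from the rank-two case and build the basis $\tilde f_i=(e_0')^{k-i}(e_1')^i/i!$ by hand, and you absorb the factor $2$ relating the tame characters at $0$ and $\nullt$ into a rescaling of the basis, a normalization the paper leaves implicit.
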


\begin{proof}
By \cite[Lemma 1.2]{FW10},
there exists a basis $\{f_i\}_{i=0}^k$ of the restriction $\sV^+_k|_{\eta_0}$
such that
$\widetilde{F}_p(f_i) = p^if_i$
and, for $\sigma\in I_0$,
\[ \sigma(f_i) = \exp(t_\ell(\sigma)\rN)f_i,
\quad
\text{with $\rN(f_i) = f_{i-1}$} \]
where we set $f_{-1}=0$.
By pulling back to $\eta_\nullt$,
we obtain the statements.
\end{proof}

We turn to the infinity point.
We again distinguish the infinity points $\infty$ and $\inftyt$
for $\sV^+_k$ and $\sVt_k$, respectively.

Consider the quadratic Gauss sum
\[ g = -\sum_{x\in\FF_p^\times}\Big(\frac{x}{p}\Big)\psi(x). \]
It is well-known that
\begin{equation}\label{eq:gauss_square}
g^2 = (-1)^{(p-1)/2}p.
\end{equation}
We have the natural surjection
$\pi_1^{\et}(\Gm) \to \Gal(\FFbar_p/\FF_p)$
whose kernel is the geometric fundamental group.
Introduce two characters
\[ \theta_0, \theta_1\colon
\pi_1^{\et}(\Gm) \longrightarrow \QQbar_\ell^\times \]
given by composing characters
$\theta'_i\colon \Gal(\FFbar_p/\FF_p) \to \QQbar_\ell^\times$, respectively,
where
\[ \theta'_0(F_p) = g,
\quad
\theta'_1(F_p) = (-1)^{(p-1)/2}. \]

We set
\[ \cK_i^+ = \AS(-2(k-2i)t),
\quad
\cK_i = \AS(-2(k-2i)t)\otimes\qt,
\quad
\cF_i = \cF(\theta_0^k\theta_1^i). \]
Here, for an integer $a$,
$\AS(at)$ is the pullback of $\AS$ on $\AA^1_t$
by the map induced by $t\mapsto at$,
and $\cF(\theta_0^a\theta_1^b)$ is the lisse sheaf
corresponding to the character $\theta_0^a\theta_1^b$.
Notice that the sheaf $\cF_i$ is unramified
and $\qt$ is tamely ramified at $\PP^1\setminus\Gm$.
For $a\in\ZZ$,
the sheaf $\AS(at)$, lisse on $\AA^1$,
is ramified at $\inftyt$
if and only if $p\nmid a$.
In the latter case, it has Swan conductor
$\sw_\inftyt(\AS(at)) = 1$.

Let
\begin{equation}\label{eq:Xi}
\Xi_k(p)
= \big\{ i\in\ZZ_{\geq 0} \mid 0\leq i\leq \flr{(k-1)/2}, k-2i\equiv 0 \bmod{p} \big\}.
\end{equation}

\begin{lem}\label{lem:sVt_infty}
We have the decomposition of
$\sVt_k|_{\eta_\inftyt}$
as follows.
\begin{enumerate}
\item
If $k=2r+1$ is odd,
\begin{equation}\label{eq:decompKF}
\sVt_k|_{\eta_{\inftyt}}
= \bigoplus_{i=0}^r \cV_i^{\oplus 2},
\quad
\cV_i = \cK_i\otimes\cF_{i+1}
\end{equation}
with Swan conductor
$\sw(\cV_i) = 1-\delta_{\Xi_k(p)}(i)$.
In particular,
$\sVt_k^{I_\inftyt} = 0$.
\item
In case $k=2r$ is even,
\[ \sVt_k|_{\eta_{\inftyt}}
= \cF_r
\oplus \bigoplus_{i=0}^{r-1} (\cV_i^+)^{\oplus 2},
\quad
\cV_i^+ = \cK_i^+\otimes\cF_i \]
with Swan conductor
$\sw(\cV_i^+) = 1-\delta_{\Xi_k(p)}(i)$.
Moreover,
$\Gal(\FFbar_p/\FF_p)$ acts on $\sVt_k^{I_\inftyt}$
semisimply and
\begin{equation}\label{eq:ch_poly_inftyt}
\det(1-F_pT \mid \sVt_k^{I_\inftyt})
= (1-p^{k/2}T)^{1+2n_k^+(p)}(1-(-1)^{(p-1)/2}p^{k/2}T)^{2n_k^-(p)}
\end{equation}
with $n_k^{\pm}(p)$ defined previously in \eqref{eq:n_k}.
\end{enumerate}
\end{lem}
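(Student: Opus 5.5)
We will follow the proof of Lemma \ref{lem:sVt_0}: take the local structure of $\sV^+_k$ at $\infty$ from \cite{FW10} and pull it back along $[2]$. By \cite[Lemma 1.3]{FW10} (the analogue at infinity of \cite[Lemma 1.2]{FW10}), $\Kl_2|_{\eta_\infty}$ is, after passing to the double cover $z=t^2$, isomorphic to $\AS(2t)\otimes\cF(\theta_0)\otimes(\text{tame sign}) \oplus \AS(-2t)\otimes\cF(\theta_0)\otimes(\text{tame sign})$. This is the classical local monodromy: the stationary phase points $x=\pm\sqrt z$ give phase $\pm 2\sqrt z$ and a Gauss-sum Frobenius factor $g$. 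Since the upstairs variable is exactly $t=\sqrt z$, the restriction $\Klt_2|_{\eta_\inftyt}$ splits as a sum of two rank-one pieces $L_\pm$. Up to the sign convention for $\psi$, these are $L_+=\AS(-2t)\otimes\cF(\theta_0)$ and $L_-=\AS(2t)\otimes\cF(\theta_0)\otimes\qt$. The extra $\qt$ and $\theta_1$ factors come from identifying the two Frobenius lifts $\pm\sqrt z$, or equivalently from $g^2=(-1)^{(p-1)/2}p$ as in \eqref{eq:gauss_square}. The first concrete step is to fix this rank-two decomposition precisely, including where the $\qt$ twist sits. To do so, we compare determinants with the alternating pairing \eqref{eq:Kl_duality}, which forces $\det\Klt_2=\QQbar_\ell(-1)$, i.e.\ Frobenius acts by $p=g^2\theta_1(F_p)$. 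We then check Frobenius traces at the unramified infinity stalk against \cite{FW10}.

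Next we take $\Sym^k$ of the rank-two sum:
\[ \Sym^k(L_+\oplus L_-)=\bigoplus_{a=0}^k L_+^{\otimes(k-a)}\otimes L_-^{\otimes a}. \]
The $a$-th summand is $\AS(-2(k-2a)t)\otimes\qt^{a}\otimes\cF(\theta_0^k)$, possibly times a power of $\theta_1$ arising from the bookkeeping above. The summands $a$ and $k-a$ have Artin--Schreier parts $\AS(\mp 2(k-2a)t)$. We use the automorphism $t\mapsto -t$, which commutes with $[2]$ and under which $\Klt_2$ is invariant as a pullback from $z$, to pair them. This gives the multiplicity $2$ after relabelling $i=\min(a,k-a)$ and absorbing the sign into the indexing; alternatively, we pair $a\leftrightarrow k-a$ directly, since isomorphism classes over $\eta_\inftyt$ only need the Swan part and the Frobenius character to agree. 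For $k=2r+1$, each index $i\le r$ occurs twice, and the parity of $a$ determines whether $\qt$ appears and shifts $\theta_1^i$ to $\theta_1^{i+1}$. This yields $\cV_i=\cK_i\otimes\cF_{i+1}$. For $k=2r$, the middle term $a=r$ has trivial Artin--Schreier part and $\qt^{r}\otimes\qt^{r}$ trivial, giving $\cF_r$; the other pairs give $(\cV_i^+)^{\oplus2}$ with no $\qt$.

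The Swan conductors follow from the fact that $\AS(at)$ has Swan conductor $1$ at $\inftyt$ iff $p\nmid a$, and is unramified otherwise. Tame twists by $\qt$ and unramified twists do not change this. Since $p$ is odd, $p\mid 2(k-2i)$ iff $i\in\Xi_k(p)$, which gives $\sw=1-\delta_{\Xi_k(p)}(i)$.

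For the invariants, a rank-one summand is $I_\inftyt$-invariant iff it is unramified. In the odd case every $\cV_i$ contains $\qt$, which is ramified at $\inftyt$ even when the Artin--Schreier part becomes trivial, so $\sVt_k^{I_\inftyt}=0$. In the even case the invariants are $\cF_r$ together with $(\cV_i^+)^{\oplus2}$ for $i\in\Xi_k(p)$, on which $F_p$ acts by $g^k(-1)^{i(p-1)/2}=p^{k/2}(\pm1)$ using \eqref{eq:gauss_square}. Counting, $k-2i\equiv0 \bmod p$ with $0\le i<r$ means $k-2i=2pm$ for $1\le m$, $2pm\le k$, with $i=r-pm$. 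Then $r-i=pm$ is even or odd according to $m$, so the parity of $i$ relative to $r$ determines the eigenvalue. Counting even versus odd $m$ gives $\flr{k/4p}$ and $\flr{k/4p+1/2}$, which combines with the $\cF_r$ term ($\theta_1^r$, giving the sign relative to $r$) to produce \eqref{eq:ch_poly_inftyt}. Semisimplicity holds because everything is a direct sum of characters.

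The main obstacle is the first step: getting the precise rank-two local decomposition of $\Klt_2$ at $\inftyt$, with exactly the right $\qt$ and $\theta_1$ placement. Every subsequent sign depends on it. We will pin it down by matching with \cite[\S1]{FW10} and checking the case $k=2$, where the answer must be consistent with Proposition \ref{prop:FW}(2a) via \eqref{eq:sheafV_relation}.
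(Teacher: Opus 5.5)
Your route differs from the paper's. The paper does not take symmetric powers upstairs. It pulls back \cite[Lemma 2.4]{FW10}, which already writes $\Sym^k\Kl_2|_{\eta_\infty}$ as $\bigoplus_i[2]_*\cK_i\otimes\cF_{i+1}$ (resp.\ $\qt^r\otimes\cF_r\oplus\bigoplus_i[2]_*\cK_i^+\otimes\cF_i$), and then does the same Frobenius count you do. Your route would also work, but there is a genuine gap: the rank-two decomposition you commit to is wrong, and the $\qt$-bookkeeping built on it does not give the lemma. Your first, rough description, with the tame sign on \emph{both} summands, had the right shape; the ``precise'' version is the one that fails.

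\begin{itemize}
\item With $L_+=\AS(-2t)\otimes\cF(\theta_0)$ and $L_-=\AS(2t)\otimes\cF(\theta_0)\otimes\qt$, you get $\det(L_+\oplus L_-)=\qt\otimes\cF(\theta_0^2)$. This is tamely ramified at $\inftyt$, whereas $\det\Klt_2=[2]^*\det\Kl_2=\QQbar_\ell(-1)$ is lisse on $\Gm$. So the determinant check you propose rejects it.
\item It also contradicts the case $k=1$ of the statement, in which both summands of $\Klt_2|_{\eta_\inftyt}$ contain $\qt$.
\item For $k$ odd: your $a$-th summand carries $\qt^a$, so exactly one of the summands $a=i$, $a=k-i$ has no $\qt$. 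For $i\in\Xi_k(p)$ (nonempty once $p\le k$) that summand is unramified, which gives $\sVt_k^{I_\inftyt}\neq 0$.
\item For $k\equiv 2\bmod{4}$: the middle summand $\qt^r\otimes\cF(\theta_0^k)$ is ramified, so $\cF_r$ disappears. For $k=2$ you would get $\sVt_2^{I_\inftyt}=0$ instead of a line with $F_p=p$.
\end{itemize}
Your later assertions, that every $\cV_i$ contains $\qt$ and that the middle term is unramified, are true, but they do not follow from the decomposition you wrote.

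The correct input is the case $k=1$ of \cite[Lemma 2.4]{FW10}, pulled back. Let $\iota\colon t\mapsto -t$. Then $\Klt_2|_{\eta_\inftyt}\cong L_1\oplus\iota^*L_1$ with $L_1=\cK_0\otimes\cF_1=\AS(-2t)\otimes\qt\otimes\cF(\theta_0\theta_1)$. Since $\qt(-1)=(-1)^{(p-1)/2}$, one has $\iota^*\qt\cong\qt\otimes\cF(\theta_1)$, hence $\iota^*L_1=\AS(2t)\otimes\qt\otimes\cF(\theta_0)$. The determinant now has Frobenius $g^2(-1)^{(p-1)/2}=p$, as it must. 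Taking symmetric powers,
$L_1^{\otimes(k-a)}\otimes(\iota^*L_1)^{\otimes a}=\AS(-2(k-2a)t)\otimes\qt^{k}\otimes\cF(\theta_0^k\theta_1^{k-a})$.
So the tame factor is governed by the parity of $k$, not of $a$. The summands $a=i$ and $a=k-i$ are $\cV_i$ and $\iota^*\cV_i$ (resp.\ $\cV_i^+$ and $\iota^*\cV_i^+$), and $a=r$ gives $\cF_r$. From there your Swan-conductor, invariant and eigenvalue counts go through and coincide with the paper's.

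One more point: your justification of the multiplicity two is not valid. $\AS(ct)\not\cong\AS(-ct)$ at $\inftyt$ when $p\nmid c$, because their ratio is wildly ramified. So the paired summands are $\iota^*$-conjugate rather than isomorphic. That still suffices for the Swan conductors, the inertia invariants and the Frobenius action on them.
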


\begin{proof}
Recall \cite[Lemma 2.4]{FW10} that
\[ {\sV_k^+}|_{\eta_{\infty}}
= \begin{cases}\displaystyle
\bigoplus_{i=0}^r [2]_*\cK_i
	\otimes\cF_{i+1}
& \text{for $k=2r+1$ odd}, \\
\displaystyle
\qt^r\otimes\cF_r
\oplus \bigoplus_{i=0}^{r-1} [2]_*\cK_i^+
	\otimes\cF_i
& \text{for $k=2r$ even}.
\end{cases} \]
Pulling back to $\eta_\inftyt$,
the statements follow,
except for the last equality \eqref{eq:ch_poly_inftyt}.
For $i \in \Xi_k(p)$,
$\cK_i^+$ is trivial and $\cV_i^+ = \cF_i$,
so one has
\[ \sVt_k^{I_\inftyt}
= \cF_r
\oplus \bigoplus_{i\in\Xi_k(p)} \cF_i^{\oplus 2}. \]
By \eqref{eq:gauss_square},
we have $F_p|_{\cF_r} = (\theta_0^2\theta_1)^r(F_p) = p^r$ and,
for $i\in\Xi_k(p)$,
\begin{align*}
F_p|_{\cF_i} = (\theta_0^k\theta_1^i)(F_p)
&= \big((-1)^{(p-1)/2}\big)^{r-i}p^r \\
&= \begin{cases}
p^r, & (k-2i)/p \equiv 0 \bmod{4}, \\
(-1)^{(p-1)/2}p^r, & (k-2i)/p \equiv 2 \bmod{4}. \\
\end{cases}
\end{align*}
Noticing the counting
\begin{align*}
\#\{ i\in\Xi_k(p) \mid (k-2i)/p \equiv 0\bmod{4} \}
&= \flr{\frac{k}{4p}} = n_k^+(p), \\
\#\{ i\in\Xi_k(p) \mid (k-2i)/p \equiv 2\bmod{4} \}
&= \flr{\frac{k}{4p}+\frac{1}{2}} = n_k^-(p),
\end{align*}
the formula \eqref{eq:ch_poly_inftyt} then follows.
\end{proof}

\subsection{The cohomology}

\begin{lem}\label{lem:coh_vanish}
We have the vanishing
$\coH^i_{\et,?}(\Gpb,\sVt_k)=0$
for $i\neq 1$ and $?=\emptyset, \rc, \rmid$.
\end{lem}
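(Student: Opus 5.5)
We handle the degrees separately, using only that $\Gpb$ is a smooth affine curve and that $\sVt_k$ is lisse on it.

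\emph{Degrees $i\geq 3$ and $i<0$.} Both $\coH^i_{\et}$ and $\coH^i_{\et,\rc}$ vanish, since the cohomological dimension of a curve is $2$. As $\Gpb$ is affine, Artin vanishing gives $\coH^2_{\et}(\Gpb,\sVt_k)=0$ directly.

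\emph{Degree $0$.} Since $\Gpb$ is not proper, $\coH^0_{\et,\rc}(\Gpb,\sVt_k)=0$: a compactly supported section of a lisse sheaf on a connected non-proper curve must vanish. It remains to treat $\coH^0_{\et}(\Gpb,\sVt_k)=(\sVt_k)^{\pi_1^{\rm geo}}$, the invariants under the geometric fundamental group of $\Gpb$. This space injects into the invariants under any inertia group. By Lemma~\ref{lem:sVt_0} the $I_\nullt$-invariants are the line $\langle\tilde f_0\rangle$, so $\coH^0$ has dimension at most one.
\begin{itemize}
\item[(a)] For $k$ odd, Lemma~\ref{lem:sVt_infty}(1) gives $\sVt_k^{I_\inftyt}=0$, hence $\coH^0=0$.
\item[(b)] For $k$ even, suppose a nonzero global invariant $v$ exists. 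Its Frobenius eigenvalue is the same whether computed at $\nullt$ or at $\inftyt$, because $F_p$ acts on $\coH^0$ compatibly with both restrictions. At $\nullt$ the eigenvalue is $1$, since $F_p|_{\sVt_k^{I_\nullt}}=1$. At $\inftyt$, by \eqref{eq:ch_poly_inftyt} every eigenvalue is $\pm p^{k/2}$. This is a contradiction for $k\geq 1$.
\end{itemize}

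There is a caveat in (b). The lifts $\widetilde F_p$ at $\nullt$ and at $\inftyt$ act on $\coH^0$ as the arithmetic Frobenius, independently of the choice of lift, because $\coH^0$ is a subspace of geometric invariants. So the comparison is legitimate. Alternatively, $\coH^0$ is pure of weight $k$, as a sub of a pure lisse sheaf, while $F_p=1$ has weight $0$; this gives the same contradiction for $k\geq1$. I would use this weight argument to avoid fussing over lifts, and it applies to odd $k$ as well.

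\emph{Remaining degree-$2$ groups.} Poincaré duality \eqref{eq:cempty_duality} identifies $\coH^2_{\et,\rc}$ with the dual of $\coH^0_{\et}$ (up to twist), which vanishes by the previous step. Finally, $\coH^i_{\et,\rmid}$ is the image of $\coH^i_{\et,\rc}$ in $\coH^i_{\et}$, so it vanishes whenever either group vanishes. This covers all $i\neq1$.

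The main point needing care is the degree-$0$ argument. The natural route there is the weight argument: the invariant line at $\nullt$ is forced to have weight $0$, which is incompatible with purity of weight $k$.
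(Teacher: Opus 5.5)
Your proof is correct, but it establishes the key vanishing $\coH^0_{\et}(\Gpb,\sVt_k)=0$ by a different route from the paper.

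The paper argues globally. By Katz, the geometric monodromy group of $\Kl_2$ is $\SL_2$. This group is connected, so it is unchanged under pullback by the finite \'etale cover $[2]$. Then $\Sym^k$ of the standard representation of $\SL_2$ has no nonzero invariants for $k\geq 1$.

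You argue locally, using weights. The geometric invariants embed Frobenius-equivariantly into $\sVt_k^{I_\nullt}=\langle\tilde f_0\rangle$, where $F_p$ acts by $1$ (weight $0$). They also embed into the stalk at any closed point of $\Gm$, where the sheaf is pure of weight $k\geq 1$ by Deligne's purity for $\Kl_2$. These two facts are incompatible.

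This weight-drop argument works uniformly in $k$. Your case split into (a) and (b) is therefore unnecessary, as is the comparison with the eigenvalues $\pm p^{k/2}$ at $\inftyt$.

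What your route buys is economy of input. It uses only Lemma~\ref{lem:sVt_0}, already imported from \cite{FW10}, together with purity. It avoids Katz's monodromy theorem and the connectedness step needed to pass to the double cover. The paper's route is shorter and proves more, namely geometric irreducibility of $\sVt_k$, but it rests on a deeper theorem.

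The remaining degrees are handled the same way in both proofs:
\begin{itemize}
\item Artin vanishing for $\coH^2_{\et}$;
\item duality \eqref{eq:cempty_duality} for the compactly supported groups (you also note $\coH^0_{\et,\rc}=0$ directly from non-properness);
\item the definition of $\coH_{\et,\rmid}$ as an image.
\end{itemize}
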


\begin{proof}
Since $\Gm$ is an affine curve,
we have $\coH^i_{\et}(\Gpb,\sVt_k)=0$ for $i\neq 0,1$
by Artin vanishing.
By \cite[Thm.11.1]{Katz88},
the geometric monodromy group of $\Kl_2$
equals $\mathrm{SL}_2$ over $\QQbar_\ell$, which is connected.
Therefore, the geometric monodromy group
of the pullback $\Klt_2$ by the double cover $[2]$
remains equal to $\mathrm{SL}_2$.
Since the symmetric power $\sVt_k$
of the standard representation $\sVt_1$
of $\mathrm{SL}_2$ has no nontrivial invariant,
one obtains $\coH^0_{\et}(\Gpb,\sVt_k)=0$.
The duality pairing \eqref{eq:cempty_duality} for $\sF = \sVt_k$
then yields the vanishing $\coH^i_{\et,\rc}(\Gpb,\sV_k) = 0$ for $i\neq 1$.
\end{proof}

Consequently, one has
\[ \dim\coH_{\et}^1(\Gpb,\sVt_k)
= \dim\coH_{\et,\rc}^1(\Gpb,\sVt_k)
= -\chi(\PP^1, j_!\sVt_k). \]
By the Grothendieck--Ogg--Shafarevich formula
and Lemmas \ref{lem:sVt_0},  \ref{lem:sVt_infty},
one obtains readily
\begin{align*}
\chi(\PP^1, j_!\sVt_k)
= -\sw_\nullt(\sVt_k) - \sw_\inftyt(\sVt_k)
&= -2\big(\flr{(k+1)/2}-|\Xi_k(p)|\big) \\
&= \begin{cases}
-2\big(\frac{k+1}{2}-\flr{\frac{k}{2p}+\frac{1}{2}}\big), & \text{$k$ odd}, \\
-2\big(\frac{k}{2}-\flr{\frac{k}{2p}}\big), & \text{$k$ even}.
\end{cases}
\end{align*}

On the other hand, consider the exact sequence on $\PP^1$
\[ 0 \longrightarrow j_!\sVt_k
\longrightarrow j_*\sVt_k
\longrightarrow \sVt_k^{I_\nullt}\oplus\sVt_k^{I_\inftyt}
\longrightarrow 0. \]
Taking $\bR\Gamma$
and by \eqref{eq:mid_j*} and Lemma \ref{lem:coh_vanish},
it yields
\[ 0 \longrightarrow \sVt_k^{I_\nullt}\oplus\sVt_k^{I_\inftyt}
\longrightarrow \coH_{\et,\rc}^1(\Gpb,\sVt_k)
\longrightarrow \coH_{\et,\rmid}^1(\Gpb,\sVt_k)
\longrightarrow 0. \]

Up to this point,
we obtain the statements in Theorem \ref{thm:zeta_at_p}
except the formulas for
$\det(F_p\mid \coH^1_{\et,\rmid}(\Gpb, \sV_k))$.

\subsubsection*{The determinant}
Due to the existence of $(-1)^{k+1}$-symmetric perfect pairing
\eqref{eq:mid_duality}
on the pure $\Gal(\FFbar_p/\FF_p)$-module
$\coH^1_{\et,\rmid}(\Gpb, \sV_k)$,
we have
\[ \det\big(F_p\mid \coH^1_{\et,\rmid}(\Gpb, \sV_k)\big)
= \pm p^{\delta(k+1)/2} \]
with positive sign for $k$ even.
It remains to determine the sign in the case where $k$ is odd.

Together with \eqref{eq:c=det}, we let
\[ c^+ = \det(-F_p\mid\coH_{\et}^1(\PP^1_{\FFbar_p},j_*\sV_k^+)),
\quad
\tilde{c} = \det(-F_p\mid\coH_{\et}^1(\PP^1_{\FFbar_p},j_*\sVt_k)). \]
By \eqref{eq:cohV_relation}, we have the relation
\[ \tilde{c} = c^+\cdot c. \]
The value $c^+$ is given by $(-1)^{\delta^+}\cdot\eqref{eq:FWdet_odd}$
for $k$ odd.
So we are reduced to computing $\tilde{c}$.

By the product formula \cite[Th.(3.2.1.1)]{Laumon},
\[ \tilde{c} = p^{k+1}\prod_{x\in |\PP^1|}
\eps(\PP^1_{(x)}, {j_*{\sVt_k}}_{(0)},\omega_{(x)}) \]
for any nonzero global meromorphism $1$-form $\omega$ on $\PP^1$.
Each term in the product in the equality
is the local $\eps$-factor, see \cite[Th.(3.1.5.4)]{Laumon}.
We now take
$\omega = \de t^2 = [2]^*\de z$.
Since, on $\Gm$,
$\omega$ has no pole nor zero
and $j_*{\sVt_k}$ is lisse,
$\eps(\PP^1_{(x)}, {j_*{\sVt_k}}_{(0)},\omega_{(x)}) = 1$
for $x\in |\Gm|$.
One reduces
\begin{equation}\label{eq:eps_0_infinity}
\tilde{c} =
p^{k+1}\eps(\PP^1_{(\nullt)}, {j_*{\sVt_k}}_{(\nullt)},\omega_{(\nullt)})\cdot
\eps(\PP^1_{(\inftyt)}, {j_*{\sVt_k}}_{(\inftyt)},\omega_{(\inftyt)}).
\end{equation}

By \cite[Lemma 1.2]{FW10} and Lemma \ref{lem:sVt_0},
we have
${j_*{\sVt_k}}_{(\nullt)} \simeq {j_*\sV_k^+}_{(0)}$.
Therefore we obtain
\begin{align}
\nonumber
\eps(\PP^1_{(\nullt)}, {j_*{\sVt_k}}_{(\nullt)}, \omega_{(\nullt)})
&= \eps(\PP^1_{(0)}, {j_*\sV_k^+}_{(0)}, {\de z^2}_{(0)}) \\
\nonumber
&= p^{k(k+1)/2}p^{k+1}\eps(\PP^1_{(0)}, {j_*\sV_k^+}_{(0)}, {\de z}_{(0)})
\quad
\text{(\cite[(3.1.5.5)]{Laumon})} \\
\label{eq:eps_0}
&= -p^{(k+1)^2}
\quad
\text{(\cite[Prop.1.4]{FW10})}.
\end{align}
Here, in the second equality,
one uses that
$\de z^2 = 2z\de z$ has a simple zero at $0$,
$\det{j_*\sV_k^+}_{(0)} = \QQbar_\ell(-\frac{k(k+1)}{2})$
and the corresponding reciprocity map
sends $2z$ to $p^{k(k+1)/2}$.

Recall the local structure in Lemma \ref{lem:sVt_infty}
of $\sVt_k$ at $\inftyt$.
We have, for $\alpha\in\FF_p$,
the evaluations of the local conductor (\cite[(3.1.5.1)]{Laumon})
\[ a(\alpha) :=
a(\PP^1_{(\infty)}, {j_*\AS(\alpha t)\otimes\qt}_{(\infty)}, \omega_{(\infty)})
= \begin{cases}
-2, & \alpha = 0, \\
-1, & \alpha \neq 0.
\end{cases}\]
One has, for $\cV_i = \cK_i\otimes\cF_{i+1}$,
\begin{align*}
\eps(\PP^1_{(\infty)}, {j_*\cV_i}_{(\infty)}, \omega_{(\infty)})
&= \det(F_p\mid\cF_{i+1})^{a(4i-2k)}
	\eps(\PP^1_{(\infty)}, {j_*\cK_i}_{(\infty)}, \omega_{(\infty)})
	\quad
	\text{(\cite[(3.1.5.6)]{Laumon})} \\
&= \begin{cases}
\pm g^{-2k}\cdot p^{-3}g, & p\mid (k-2i) \\
\pm g^{-k}\cdot p^{-2}, & p\nmid (k-2i)
\end{cases}
\quad
\text{\cite[Lemma 2.5(iv, v)]{FW10}}.
\end{align*}
(The sign can be determined explicitly by the results in \cite{FW10}.
It is irrelevant for our purpose
due to the exponent 2 in the decomposition \eqref{eq:decompKF},
so we omit it to simplify the presentation in calculations.)
Putting things together
with again the identity \eqref{eq:gauss_square}
and setting
\[ a_k(p) = \#\Xi_k(p) = \flr{\frac{k}{2p}+\frac{1}{2}}, \]
we obtain
\begin{align}
\nonumber
\eps(\PP^1_{(\inftyt)}, {j_*\sVt_k}_{(\inftyt)}, \omega_{(\inftyt)})
&= (g^{-k}p^{-2})^{k+1}(g^{-k+1}p^{-1})^{2a_k(p)} \\
\label{eq:eps_infinity}
&= \left(\frac{-1}{p}\right)^{(k+1)/2}
	(p^{-(k+1)/2})^{k+4+2a_k(p)}.
\end{align}

Putting \eqref{eq:eps_0} and \eqref{eq:eps_infinity} into \eqref{eq:eps_0_infinity},
we thus have
\[ \tilde{c} = \left(\frac{-1}{p}\right)^{(k+1)/2}
(-p^{(k+1)/2})^{k-2a_k(p)} \]
and hence the formula $c=(-1)^\delta\cdot\eqref{eq:det_odd}$
for $c = \tilde{c}/c^+$ when $k$ is odd.

\begin{rem}
One can also obtain the above results more directly
by investigating the structures of
$\sV_k = \qt\otimes\sV^+_k$ based on \cite{FW10}
instead of working on $\sVt_k$.
More details can be found in Appendix \ref{sect:Lim}.
\end{rem}

\section{Galois representations}\label{sect:twistedSFY}
In this section,
we construct, for each $k$,
the Galois representation of $\QQ$ associated with the twisted moments
$\{\tm_k(p)\}$
of the Kloosterman sums as a subquotient of the \'{e}tale cohomology
of an explicit affine variety.
We extract needed information from the work
\cite[\S\S 2 and 5.1]{FSY} in \S \ref{sect:motives},
which gives detailed properties of various realizations of the motives.
We prove the modularity Theorem \ref{thm:EC} in the last subsection
\S \ref{sect:modularity}.

\subsection{The motives}\label{sect:motives}
Let $\FF$ denote a field.
Over $\FF$,
let $\sK$ be the affine hypersurface
\[ \sK = \Big(\sum_{i=1}^k y_i + y_i^{-1} \Big) \subset \Gm^k \]
where $\{y_i\}$ is the Cartesian coordinate of the torus $\Gm^k$.
Let $\symgp_k$ be the $k$-th symmetric group,
acting on $\sK$ by permuting the variables $y_i$.
Let $\mu_2 = \{\pm 1\}$ be the finite group of order 2,
acting on $\sK$ by $y_i \mapsto \pm y_i$ for all $i$.
The two actions commute and yield an action of $\symgp_k\times\mu_2$
on $\sK$.
Consider the character
\[ \chi\colon \symgp_k\times\mu_2 \longrightarrow \{\pm 1\}, \]
defined to be the product of the sign character on $\symgp_k$
and the nontrivial one on $\mu_2$.
Let $\chi^+\colon \symgp_k\times\mu_2 \longrightarrow \{\pm 1\}$,
which is the sign on $\symgp_k$
and trivial on $\mu_2$.

Suppose $\FF=\FF_p$.
In \cite[Thm.3.12]{FSY} concerning the moment $\spm_k(p)$,
it is shown that
the \'etale cohomology $\coH_{\et,\rc}^1(\Gpb,\sV^+_k)$
is isomorphic to the Tate twist of the $\chi^+$-isotypic part
$\coH_{\et,\rc}^{k-1}(\sK_{\FFbar_p},\QQbar_\ell)_{\chi^+}(-1)$
of the \'etale cohomology of $\sK_{\FFbar_p}$
under the action of $\symgp_k\times\mu_2$,
and their weight $(k+1)$ parts also match.
Taking the $\chi$-isotypic parts instead in the following,
we obtain the corresponding statements for the twisted moments $\tm_k(p)$
by the same arguments.
Again the choices of $\chi^+$ and $\chi$ reflect
the decomposition in \eqref{eq:sheafV_relation}
under a double cover.

\begin{prop}\label{prop:V_K}
Take $\FF=\FF_p$ with $p\geq 3$.
We have
\begin{align*}
\coH^1_{\et,\rc}(\Gpb,\sV_k)
&\simeq
\coH^{k-1}_{\et,\rc}(\sK_{\FFbar_p},\QQbar_\ell)_\chi(-1), \\
\coH^1_{\et,\rmid}(\Gpb,\sV_k)
&\simeq
\big(\gr^W_{k-1}\coH^{k-1}_{\et,\rc}(\sK_{\FFbar_p},\QQbar_\ell)_\chi\big)(-1).
\end{align*}
Here $W$ denotes the weight filtration.
\end{prop}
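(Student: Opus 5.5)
The plan is to redo the argument of \cite[Thm.3.12]{FSY}, tracking the $\chi$-isotypic part instead of the $\chi^+$-isotypic part. The geometric input is the Künneth/convolution description of the cohomology of $\sK$. Over $\Gm^k\times\Gmz$, consider $g(y,z)=\sum_i (y_i+y_i^{-1})$ twisted by the scaling variable. Write the fibre of the Kloosterman-type family in the form
\[ \sum_{a}\psi\Big(a\sum_i (y_i+y_i^{-1})\Big), \]
that is, use the $\Klt_2$-description $\Klt_2(q;a)=\sum_y\psi(a(y+y^{-1}))$. By the Künneth formula,
\[ \bR\pi_!\AS\Big(a\sum_i(y_i+y_i^{-1})\Big) \simeq (\bR\pi_!\AS(a(y+y^{-1})))^{\otimes k}, \]
and this equals $\Klt_2^{\otimes k}[-k]$ on $\Gm_a$, using that $\bR^1\pi_!\AS(a(y+y^{-1}))=[2]^*\Kl_2=\Klt_2$ in the notation above. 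Integrating over $a\in\Gm$ via the Fourier-type identity $\sum_{a\in\FF_q}\psi(aF)=q\,\delta_{F=0}$ (sheaf-theoretically, $\bR\mathrm{pr}_!\AS(aF)$ over $\AA^1_a$ is $\QQbar_\ell(-1)[-2]$ supported on $F=0$), one relates $\coH_\rc^*(\Gm_a,\Klt_2^{\otimes k})$ to $\coH_\rc^{*}(\sK)(-1)$, up to the contribution of $a=0$, which gives $\coH_\rc^*(\Gm^k)$.

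The next step is to take isotypic parts. The group $\symgp_k$ acts on $\Klt_2^{\otimes k}$ by permuting factors, with the Koszul sign coming from the degree-one shift. Taking the sign-isotypic part of $(\Klt_2[-1])^{\otimes k}$ therefore picks out $\Sym^k\Klt_2[-k]=\sVt_k[-k]$. The $\mu_2$-action $y\mapsto -y$ on $\sK$ corresponds to $a\mapsto -a$ on the base $\Gm_a$, which is the deck transformation of $[2]$. By \eqref{eq:sheafV_relation}, $[2]_*\sVt_k=\sV^+_k\oplus\sV_k$: the trivial $\mu_2$-part gives $\sV^+_k$ (this is FSY's $\chi^+$), and the nontrivial part gives $\sV_k$. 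Hence the $\chi$-part yields $\coH^1_\rc(\Gpb,\sV_k)\simeq \coH^{k-1}_\rc(\sK_{\FFbar_p})_\chi(-1)$. The degree bookkeeping uses Lemma~\ref{lem:coh_vanish} (only $\coH^1$ survives), together with the check that the correction terms vanish in the $\chi$-part. The correction terms are $\coH_\rc^*(\Gm^k)$ from $a=0$ and the $\AA^1$-versus-$\Gm$ boundary. Since $\mu_2$ acts on $\coH_\rc^*(\Gm^k)$ through translations by $-1$, which act trivially on cohomology of the torus, the nontrivial $\mu_2$-character kills them.

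For the middle part, I take weight-$(k+1)$ graded pieces on both sides. By \eqref{eq:mid_j*} and purity, $\coH^1_\rmid(\Gpb,\sV_k)$ is exactly the weight $k+1$ quotient of $\coH^1_\rc$. For $k$ even, Theorem~\ref{thm:zeta_at_p} identifies the kernel as $\sV_k^{I_\infty}$, which has weight $k$. For $k$ odd there is no kernel. Since the isomorphism is Frobenius-equivariant, it respects weight filtrations, and $\gr^W_{k+1}$ of the left side corresponds to $\gr^W_{k-1}\coH_\rc^{k-1}(\sK)_\chi$ twisted by $(-1)$. Because $\coH_\rc^{k-1}$ has weights $\le k-1$, this is the top graded piece, which is what we want.

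I expect the main obstacle to be the precise comparison of the Künneth/Fourier computation with isotypic parts, including signs. Specifically, I need to check that the sign character on $\symgp_k$ combined with the shift really gives $\Sym^k$ rather than $\wedge^k$, and that the scalar-extension and $a=0$ contributions have no $\chi$-component. I will follow \cite[\S2–3]{FSY} verbatim here, only replacing the trivial $\mu_2$-character by the nontrivial one at each step.
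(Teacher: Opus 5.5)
Your proposal is correct and takes the same route as the paper. The paper reruns the argument of \cite[Thm.3.12]{FSY} with $\chi$ in place of $\chi^+$: Künneth for $\AS(a\sum_i(y_i+y_i^{-1}))$ over the double cover, then projection to the $\chi$-isotypic part of $\symgp_k\times\mu_2$, then matching weight $k+1$ pieces. Your added check that the $a=0$ term $\coH^*_\rc(\Gm^k)$ has no $\chi$-component, because $\mu_2$ acts on it by a torus translation, is precisely why the twisted case gives an honest isomorphism on all of $\coH^1_\rc$ rather than only on weight-graded pieces.
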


Take $\FF=\QQ$.
The above proposition leads us to
define the motive over $\QQ$ associated with
the pure part of $\tm_k(p)$ as
\[ \Motive_k =
\big( \gr^W_{k-1}\coH_\rc^{k-1}(\sK)_\chi \big)(-1), \]
the Tate twist of the weight $k-1$ piece
of the $\chi$-eigenspace of the (various) middle cohomology
with compact support of $\sK$.
It is of rank
\[ \rk\Motive_k = \flr{\frac{k+1}{2}} - \delta_{2+4\ZZ}(k). \]
There is the natural pairing underlying the Poincar\'e pairing
\begin{equation}\label{eq:motive_pairing}
\Motive_k\otimes\Motive_k
\longrightarrow \coH^2(\PP^1)^{\otimes (k+1)} = \QQ(-k-1),
\end{equation}
which is $(-1)^{k+1}$-symmetric and perfect.

\subsubsection*{The \'etale realization}
For a prime number $\ell$, let
\[ V_{k,\ell}=\big(\gr^W_{k-1}\coH^{k-1}_{\et,\rc}(\sK_{\QQbar},\QQ_\ell)_\chi\big)(-1) \]
be the \'etale realization of $\Motive_k$,
furnished with the continuous action of the Galois group
\begin{gather*}
    \rho_{k,\ell}\colon \Gal(\QQbar/\QQ) \longrightarrow \GL(V_{k,\ell}).
\end{gather*}
The representation is compatible with the pairing induced by
\eqref{eq:motive_pairing}.

For $k$ odd,
we let
\begin{equation}\label{eq:Theta}
\Theta_k(p)
= \{ \text{$j$ odd positive integer} \mid jp\leq k \}.
\end{equation}
Let $v_p\colon \ZZ\setminus\{0\} \to \ZZ$
be the $p$-adic valuation.
We also set
$\Theta_k(p) = \Theta_k^+(p) \sqcup \Theta_k^-(p)$
where
\begin{equation}\label{eq:Thetapm}
	 \Theta_k^+(p) = \{j\in\Theta_k(p) \mid \text{$v_p(j)$ odd} \},
\quad
\Theta_k^-(p) = \{j\in\Theta_k(p) \mid \text{$v_p(j)$ even} \}. 
\end{equation}
We mention that there is a bijection
$\Xi_k(p) \xrightarrow{\sim} \Theta_k(p)$
given by
$i \mapsto (k-2i)/p$
where the set $\Xi_k(p)$ is defined in \eqref{eq:Xi}.

\begin{prop}\label{oddrep}
Let $k$ be a positive odd integer.
For any prime $\ell$,
the $\ell$-adic \'etale realization $V_{k,\ell}$ of $\Motive_k$
is a quadratic space and
yields a continuous representation
\begin{gather*}
    \rho_{k,\ell}\colon \Gal(\QQbar/\QQ) \longrightarrow \mathrm{GO}(V_{k,\ell})
\end{gather*}
satisfying the following properties.
\begin{enumerate}
\item
Let $p$ be an odd prime, $p\neq\ell$.
Fix a place of $\QQbar$ above $p$,
and $\Gal(\QQbar_p/\QQ_p) \subset \Gal(\QQbar/\QQ)$
the corresponding decomposition subgroup.
As a representation of $\Gal(\QQbar_p/\QQ_p)$,
$V_{k,\ell}$ decomposes into an orthogonal sum
$M\oplus E$ fulfilling the following conditions.
\begin{itemize}
\item
One has
$M\otimes\QQbar_\ell \cong \coH_{\et,\rc}^1(\Gpb,\sV_k)$.
\item
$E$ is a direct sum
$\bigoplus_{a\in\Theta_k(p)}\eps_a\otimes\chi_\cyc^{-(k+1)/2}$
where $\eps_a$ is the primitive character associated with the extension
$\QQ_p(\sqrt{(-1)^{(1+ap)/2}2ap})$ of $\QQ_p$
(hence $\eps_a$ is unramified precisely when $a\in\Theta_k^+(p)$).
\end{itemize}
\item
We have
\begin{equation}\label{eq:det_V}
\det \rho_{k,\ell} = \chi_D\cdot 
\chi_\cyc^{-(k+1)^2/4},
\quad
D = \begin{cases}
(-1)^{(k+3)/4}\cdot 4\cdot k!!,
& k\equiv 1\bmod{4}, \\
(-1)^{(k+1)/4}\cdot k!!,
& k\equiv -1\bmod{4},
\end{cases}
\end{equation}
where $\chi_D$ is the quadratic character associated with
the extension $\QQ(\sqrt{D})$ over $\QQ$.
\end{enumerate}
\end{prop}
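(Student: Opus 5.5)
The plan is to mirror the argument of \cite[\S 5.1]{FSY} for $\spm_k$, replacing everywhere the $\chi^+$-isotypic part by the $\chi$-isotypic part. The determinant will then be pinned down from the local determinants computed in Theorem \ref{thm:zeta_at_p} together with the local description in (1).

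\emph{Step 1: the quadratic structure and the local decomposition.} Since $k$ is odd, the pairing \eqref{eq:motive_pairing} on $\Motive_k$ is symmetric and perfect. Its $\ell$-adic realization is Galois equivariant up to the similitude $\chi_\cyc^{-(k+1)}$, so $\rho_{k,\ell}$ lands in $\mathrm{GO}(V_{k,\ell})$.

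For (1), fix $p$ odd with $p\neq\ell$. I would use the semistable (or toroidal) model of $\sK$ over $\ZZ_p$ constructed in \cite{FSY}, which compares the $\QQ_p$-fibre cohomology with the special fibre via vanishing cycles. In \cite{FSY}, $\gr^W_{k-1}$ of the generic-fibre cohomology restricted to $\Gal(\QQbar_p/\QQ_p)$ splits as the weight-$(k+1)$-after-twist part of $\coH^{k-1}_{\et,\rc}(\sK_{\FFbar_p})$ plus contributions from the singular points of the special fibre. These singular points are the points $y_i=\pm1$ where $\sum(y_i+y_i^{-1})$ degenerates. The numbers $j$ with $jp\le k$ count the extra isolated quadratic singularities, the ones where the number of $y_i=-1$ versus $y_i=1$ has difference $jp$.

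Taking the $\chi$-part and applying Proposition \ref{prop:V_K}, the first summand becomes $M$ with $M\otimes\QQbar_\ell\cong \coH^1_{\et,\rc}(\Gpb,\sV_k)$. Here I use that for $k$ odd the $\rc$, mid and full cohomology agree (Theorem \ref{thm:zeta_at_p}(1)), so no weight filtration issue arises. Each singular point contributes a one-dimensional vanishing cycle. Galois acts on it through the discriminant of the quadratic form of the singularity, twisted by $\chi_\cyc^{-(k+1)/2}$. Computing the Hessian of $\sum(y_i+y_i^{-1})$ at such a point, restricted to the $\chi$-isotypic (symmetrized) direction, gives the discriminant $(-1)^{(1+ap)/2}2ap$ up to squares, hence $\eps_a$. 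Orthogonality of $M$ and $E$ follows from the fact that the vanishing cycles are orthogonal to the image of the specialization. The dimension count $\delta+\#\Theta_k(p)=(k+1)/2=\rk\Motive_k$ is a consistency check, using $\#\Theta_k(p)=\flr{k/(2p)+1/2}$.

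\emph{Step 2: the determinant.} The determinant of an orthogonal similitude representation of odd-weight-free rank $d=(k+1)/2$ satisfies $(\det\rho)^2=\chi_\cyc^{-(k+1)d}$. Hence $\det\rho_{k,\ell}=\chi'\cdot\chi_\cyc^{-(k+1)^2/4}$ for a quadratic character $\chi'$ of $\Gal(\QQbar/\QQ)$. By (1), the local determinant at each odd $p$ is
\[
\det(F_p\mid M)\cdot\prod_{a\in\Theta_k(p)}\eps_a .
\]
For $p>k$, $E=0$ and \eqref{eq:det_odd} gives
\[
\chi'(\Frob_p)=\Bigl(\tfrac{-1}{p}\Bigr)^{(k+1)/2}\prod_{j=0}^{(k-1)/2}\Bigl(\tfrac{(-1)^j(2j+1)}{p}\Bigr),
\]
which simplifies to $\bigl(\tfrac{D}{p}\bigr)$. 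Reducing the product of the signs $(-1)^j$ together with $(-1)^{(k+1)/2}$ gives the sign $(-1)^{(k\pm\cdots)/4}$, and the factor $4$ appearing when $k\equiv1 \bmod 4$ is a square that is harmless there. This determines $\chi'=\chi_D$ by Chebotarev, since the two characters agree on a density-one set of primes.

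\emph{Main obstacle.} The real work is Step 1: identifying $E$ precisely, including the exact quadratic character $\eps_a$ at ramified $p\le k$ and the orthogonality of $M$ and $E$. I would follow \cite[\S\S 2, 5.1]{FSY} line by line, tracking the $\mu_2$-sign. The $\mu_2$-action $y\mapsto -y$ swaps the singular points with $+$ and $-$ signs, and the $\chi$-part picks out their antisymmetric combination. This is where the extra factor $2$ and the sign $(-1)^{(1+ap)/2}$ should come from, in place of the untwisted discriminant.
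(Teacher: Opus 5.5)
Your route is the paper's. The paper also gets (1) by running the Picard--Lefschetz argument of \cite[Thm.~5.8]{FSY} verbatim, with the $\chi$-isotypic part in place of the $\chi^+$-part. Proposition~\ref{prop:V_K} identifies the special-fibre piece, and $\coH^1_{\et,\rc}=\coH^1_{\et,\rmid}$ for $k$ odd. It gets \eqref{eq:det_V} exactly as you do: from \eqref{eq:det_odd} at primes $p>k$, $p\neq\ell$ (where $\Theta_k(p)=\emptyset$), plus Chebotarev. Your sign bookkeeping there is right. The exponent of $-1$ is $\frac{k+1}{2}+\binom{(k+1)/2}{2}=\binom{(k+3)/2}{2}$, whose parity is $\frac{k+3}{4}$ or $\frac{k+1}{4}$ according as $k\equiv 1$ or $3 \bmod 4$, and the product of the $2j+1$ is $k!!$.

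One thing in Step 1 is wrong, and as written it would send you looking for something that is not there. The characters $\eps_a$ do not come from restricting a Hessian to a ``symmetrized direction''. The $\mu_2$-antisymmetrization does not supply the factor $2$ or the sign $(-1)^{(1+ap)/2}$, and the twisted characters are not different from the untwisted ones.

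Take a singular point $P=(\epsilon_1,\dots,\epsilon_k)$ of the special fibre, with $\epsilon_i=\pm1$ and $\sum_i\epsilon_i=\pm ap$. Let $n_\pm$ be the number of $\epsilon_i=\pm1$, and put $y_i=\epsilon_i(1+u_i)$. The local equation of $\sK$ over $\ZZ_p$ is then $\sum_i\epsilon_iu_i^2+O(u^3)=-2\sum_i\epsilon_i=\mp 2ap$. Galois acts on the vanishing cycle of this even-dimensional quadric through the quadratic character of $\sqrt{(-1)^{(k-1)/2}\cdot(\mp 2ap)\cdot\prod_i\epsilon_i}$, times $\chi_\cyc^{-(k-1)/2}$ (which becomes $\chi_\cyc^{-(k+1)/2}$ after the Tate twist). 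Since $\prod_i\epsilon_i=(-1)^{n_-}$ with $n_-=(k\mp ap)/2$, this is $\sqrt{(-1)^{(1+ap)/2}2ap}$ for either sign. So the $2$ and the $p$-adic valuation come from the constant term $2\sum_i y_i$, and the sign from the determinant of the full form.

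Since $y\mapsto -y$ is defined over $\ZZ_p$ and commutes with Galois, the span of the vanishing cycles over one $\symgp_k\times\mu_2$-orbit carries this single character. The stabilizer $\symgp_{n_+}\times\symgp_{n_-}\subset\symgp_k$ acts on each vanishing cycle by the sign, so the $\chi$- and $\chi^+$-parts of that span are each one-dimensional. Both therefore carry the same $\eps_a\otimes\chi_\cyc^{-(k+1)/2}$ as in \cite{FSY}. The twist changes only which piece of the special-fibre cohomology becomes $M$, which is exactly why the paper can say the argument of \cite{FSY} applies verbatim.
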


\begin{prop}\label{evenrep}
Let $k$ be a positive even integer.
For any prime $\ell$,
the $\ell$-adic \'etale realization $V_{k,\ell}$ of $\Motive_k$
is a symplectic space
and yields a continuous representation
\begin{gather*}
    \rho_{k,\ell}\colon \Gal(\QQbar/\QQ) \longrightarrow \mathrm{GSp}(V_{k,\ell}).
\end{gather*}
Let $p$ be an odd prime, distinct from $\ell$.
Fix a place of $\QQbar$ above $p$
and let $I_p$ be the corresponding inertia subgroup
of $\Gal(\QQbar_p/\QQ_p) \subset \Gal(\QQbar/\QQ)$.
Then for all $\sigma\in I_p$,
$(\rho_{k,\ell}(\sigma )-1)^2 = 0$ on $V_{k,\ell}$.
Moreover,
there is an isotropic subspace $U$ of $V_{k,\ell}$,
$\dim U = \flr{k/2p}$,
such that the induced map
$\rho_{k,\ell}(\sigma) -1\colon V_{k,\ell} \to V_{k,\ell}/U$ is zero
for $\sigma\in I_p$ and
$V_{k,\ell}^{I_p} = U^\perp$.
As representations of $\Gal(\FFbar_p/\FF_p)$,
we have
\[ V_{k,\ell}^{I_p}\otimes\QQbar_\ell \cong
\coH_{\et,\rc}^1(\Gpb,\sV_k)/E,
\quad
E \cong \QQbar_\ell(-k/2)^{\delta_{2+4\ZZ}(k)}. \]
\end{prop}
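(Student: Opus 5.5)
The plan is to follow the strategy of \cite[\S5.1]{FSY} for the untwisted case, replacing the $\chi^+$-isotypic part by the $\chi$-isotypic part. Throughout we keep track of the decomposition \eqref{eq:sheafV_relation} under the double cover.

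First, the symplectic structure comes for free. For $k$ even the pairing \eqref{eq:motive_pairing} is $(-1)^{k+1}=-1$ symmetric and perfect, and it is Galois equivariant up to $\chi_\cyc^{-(k+1)}$. Hence $\rho_{k,\ell}$ lands in $\mathrm{GSp}(V_{k,\ell})$.

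Second, we analyze the local behavior at an odd prime $p\neq\ell$. Following \cite[\S5.1]{FSY}, we use the model of $\sK$ over $\ZZ_p$ (or a suitable semistable/toric compactification). As in the proof of \cite[Thm.~5.?]{FSY}, the specialization and nearby-cycle argument identifies $V_{k,\ell}^{I_p}$ with a piece of the special fiber cohomology. Concretely, we compare $\gr^W_{k-1}\coH^{k-1}_{\et,\rc}(\sK_{\QQbar_p})_\chi$ with $\coH^{k-1}_{\et,\rc}(\sK_{\FFbar_p})_\chi$. The latter is identified via Proposition \ref{prop:V_K} with $\coH^1_{\et,\rc}(\Gpb,\sV_k)(+1)$. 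By Theorem \ref{thm:zeta_at_p}(2), this is an extension of $\coH^1_{\et,\rmid}$ by $\sV_k^{I_\infty}$.

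The singular locus of the special fiber of the compactified $\sK$ consists of the ordinary double points (more precisely, quadratic singularities). These correspond to the indices $i\in\Xi_k(p)$, i.e.\ the points where $k-2i\equiv0\bmod p$, which matches the extra factors $n_k^\pm(p)$ in Theorem \ref{thm:zeta_at_p}(2a). Picard--Lefschetz theory for these quadratic singularities then gives the following. The vanishing cycles span an isotropic subspace $U$ of dimension equal to the number of singular points in the $\chi$-part, which is $\#\Xi_k(p)=\flr{k/2p}$ for $k$ even. The inertia acts by transvections $x\mapsto x\pm\langle x,\delta\rangle\delta$. This yields immediately $(\rho(\sigma)-1)V\subset U$, hence $(\rho(\sigma)-1)^2=0$ since $U$ is isotropic and $U\subset U^\perp$, and also $V^{I_p}=U^\perp$. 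The monodromy is nondegenerate on $V/U^\perp\cong U^\vee$ because the vanishing cycles are non-isotropic for the local intersection form in the even-dimensional fiber case.

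Third, we read off the Frobenius structure: $V^{I_p}=U^\perp$ maps onto the special fiber cohomology modulo the contributions. The dimension count is as follows:
\[\dim V=\tfrac k2-\delta_{2+4\ZZ}(k), \qquad \dim U^\perp=\dim V-\flr{k/2p}, \qquad \dim\coH^1_{\et,\rc}(\Gpb,\sV_k)=\tfrac k2-\flr{k/2p}.\]
These force the cokernel $E$ to have dimension $\delta_{2+4\ZZ}(k)$. Comparing with Theorem \ref{thm:zeta_at_p}(2a), $E$ is the $\cF_r$-summand of $\sV_k^{I_\infty}$ with Frobenius $p^{k/2}$, i.e.\ $\QQbar_\ell(-k/2)^{\delta_{2+4\ZZ}(k)}$. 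This piece of $\sV_k^{I_\infty}$ does not come from the degenerations, whereas the $n_k^\pm(p)$ pieces match $U^\perp/U$ corrections (the Frobenius on $U$ being $\pm p^{k/2}$).

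The main obstacle is the second step. There we must verify that the geometric degeneration of $\sK$ at $p$ is exactly of this quadratic type in the $\chi$-isotypic part, and that the number of vanishing cycles surviving in the $\chi$-part is $\flr{k/2p}$ rather than the $\Xi$-count relevant to $\chi^+$. To do this I would redo the computation of \cite[\S5.1]{FSY}: locate the singular points $y_i=\pm1$ with $\sum y_i=0$-type conditions on the special fiber, and compute the action of $\symgp_k\times\mu_2$ on their local vanishing cycles, using the sign of $\mu_2$ to select the $\chi$-eigenvectors.
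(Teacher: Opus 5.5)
Your outline is the paper's own route: transport the Picard--Lefschetz analysis of \cite{FSY}, Thm.~5.17, from $\chi^+$ to $\chi$. But one step you spell out rests on a false claim, and a comparison map is missing. For $k$ even the fibres of $\sK$ have dimension $k-1$, which is \emph{odd}. The cup product on $\coH^{k-1}$ is therefore alternating, so every vanishing class is isotropic, and that is exactly why Picard--Lefschetz yields transvections $x\mapsto x\pm t_\ell(\sigma)\langle x,\delta\rangle\delta$. Non-isotropic vanishing classes and reflections belong to the even-dimensional case, i.e.\ $k$ odd, where they produce the characters $\eps_a$ of Proposition~\ref{oddrep}. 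So your argument for nondegeneracy of the monodromy on $V_{k,\ell}/U^\perp$ contradicts your own transvection formula.

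Transvections only give $U^\perp\subset V_{k,\ell}^{I_p}$. The logarithm of monodromy is, up to nonzero multiplicities, $\pm\sum_j\langle\cdot,\delta_j\rangle\delta_j$ with pairwise orthogonal $\delta_j$. Hence $V_{k,\ell}^{I_p}=U^\perp$ (and $\dim U=\flr{k/2p}$) is equivalent to the linear independence in $V_{k,\ell}$ of the $\chi$-projected vanishing classes. That independence, and the isomorphism $V_{k,\ell}^{I_p}\otimes\QQbar_\ell\cong\coH^1_{\et,\rc}(\Gpb,\sV_k)/E$, must come from an actual map. The vanishing-cycle sequence on the compactified model over $\ZZ_p$ gives an injective specialization from $\coH^{k-1}_{\et,\rc}(\sK_{\FFbar_p})_\chi$ into the generic fibre, whose image is the common kernel of the pairings with the $\delta_x$. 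Then $E$ is the kernel of its composite with the projection to $\gr^W_{k-1}$. Your dimension count plus matching of Frobenius eigenvalues gives at most equality of characteristic polynomials, not an isomorphism of Frobenius modules. Note also that this map goes from the special fibre into $V_{k,\ell}^{I_p}$, not the other way.

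The obstacle you single out is misplaced. Take a singular point of $\sK_{\FFbar_p}$ with $j$ coordinates equal to $-1$, $j\neq k/2$. Its stabilizer in $\symgp_k\times\mu_2$ is $\symgp_j\times\symgp_{k-j}\times\{1\}$, on which $\chi=\chi^+$. Hence the $\mu_2$-sign selects nothing there. These points contribute to the $\chi$-part exactly as to the $\chi^+$-part, through the same $\flr{k/2p}=\#\Xi_k(p)$ orbits. You had already computed $\#\Xi_k(p)=\flr{k/2p}$, so there is no discrepancy to resolve.

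Among the singular points, the $\mu_2$-sign genuinely matters only at those with $j=k/2$. These are singular already on $\sK_\QQ$, and their stabilizer contains elements $(\sigma,-1)$. They are what turns $\delta_{4\ZZ}(k)$ into $\delta_{2+4\ZZ}(k)$: they produce the weight-$k$ piece $E$ that is discarded in $\gr^W_{k-1}$. This is the class of type $(k/2,k/2)$ in Proposition~\ref{prop:HodgeNumbers}(2), matching the $\cF_r$-summand at $\infty$.

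Finally, the $n_k^\pm(p)$-part of $\sV_k^{I_\infty}$ corresponds to $U$ itself, while $U^\perp/U\cong\coH^1_{\et,\rmid}(\Gpb,\sV_k)$.
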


The above two propositions are the twisted analogues
of the corresponding Thm.5.8 and Thm.5.17 for $\spm_k(p)$ in \cite{FSY}.
The proofs rely on carefully applying the Picard-Lefschetz formula
on (an explicit compactification of) $\sK$, regarded as defined over $\ZZ_p$.
This $\sK$ has additional ordinary quadratic singularities in its reduction,
and yields vanishing cycle classes in
$\coH_{\et,\rc}^{k-1}(\sK_{\QQbar_p},\QQ_\ell)$,
which are complementary to $\coH_{\et,\rc}^{k-1}(\sK_{\FFbar_p},\QQ_\ell)$
in a precise sense.
Taking the $\chi$-isotypic parts instead of $\chi^+$,
the same arguments loc.\,cit.\,verbatim give the proofs for the $\tm_k(p)$ case,
except the formula \eqref{eq:det_V} for the determinant.
The latter is an immediate consequence of Proposition \ref{prop:V_K},
the determinant formula \eqref{eq:det_odd},
applying to unramified primes $p>k$,
and the Chebotarev density theorem. 

\begin{prop}\label{prop:p-adicHodge}
The representation $V_{k,\ell}$ is de Rham at $\ell$;
when $k$ is odd, it is crystalline if $\ell$ is odd and $\ell >k$;
when $k$ is even, it is crystalline if $\ell >k/2$.
\end{prop}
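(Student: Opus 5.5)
De Rham-ness is formal: $V_{k,\ell}$ is a Tate twist of a $\chi$-isotypic direct summand of a graded piece of the weight filtration on $\coH^{k-1}_{\et,\rc}(\sK_{\QQbar},\QQ_\ell)$. We apply the $p$-adic comparison theorem (Faltings, Tsuji, Kisin) to $\sK$ over $\QQ_\ell$; in the compactly supported version one uses a normal crossings compactification and the open--closed exact sequences. This shows that $\coH^{k-1}_{\et,\rc}(\sK_{\QQbar_\ell},\QQ_\ell)$ is de Rham. The de Rham property passes to subquotients, to Tate twists, and to isotypic parts for the finite group $\symgp_k\times\mu_2$, since the corresponding projector is a Galois-equivariant idempotent. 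The weight filtration is Galois stable, so $\gr^W_{k-1}$ is a subquotient and is again de Rham.

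For crystallinity we follow \cite[\S5]{FSY}, the untwisted analogue; there the same bounds appear for $\chi^+$. The plan is to exhibit, over $\ZZ_\ell$, a good model of the relevant piece. First we use the explicit compactification $\overline{\sK}$ of $\sK$ from \cite[\S2]{FSY}, with boundary given by toric strata. We check that over $\ZZ_\ell$ it is smooth and the boundary is a relative normal crossings divisor, except for the ordinary quadratic singularities of $\sK$ in characteristic $\ell$. Those singularities sit at points $y_i=\pm1$ with $\sum y_i + y_i^{-1}\equiv 0$, that is at $2(k-2i)\equiv0 \bmod \ell$. For $\ell$ odd this means $\ell\mid k-2i$ for some $0\le i\le k$, equivalently $\ell\le k$ for $k$ odd. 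Then, since $\ell\ge3$, Fontaine--Messing, Faltings or Tsuji (crystalline comparison for proper smooth pairs with normal crossings, in log-crystalline form for the open variety) gives that the $\chi$-part of $\coH^{k-1}_{\et,\rc}$, and hence its weight-graded piece, is crystalline.

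For $k$ even, the singular points of the reduction occur at $i=k/2$ as well as at the points with $\ell\mid k-2i$. The fixed degeneracy at $i=k/2$ (the center) is part of the structure for every $\ell$ and only contributes the Tate piece $E$ that gets removed in passing to $\gr^W_{k-1}$. So the point is to show that, when $\ell>k/2$, no vanishing cycle contributes to the $\chi$-isotypic part. I would argue exactly as in Proposition~\ref{evenrep}, with $p$ replaced by $\ell$ and \'etale cohomology replaced by $p$-adic Hodge theory. The vanishing cycles attached to the singular points indexed by $\Xi_k(\ell)$ span the space $U$, of dimension $\flr{k/2\ell}$, which is $0$ when $\ell>k/2$. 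Consequently the $\chi$-part of the cohomology of the generic fibre agrees with that of the special fibre. Equivalently, the nearby-cycle complex restricted to the $\chi$-part equals the constant sheaf. For $k$ odd, the count $\Theta_k(\ell)=\emptyset$ iff $\ell>k$ plays the same role, and the characters $\eps_a$ in Proposition~\ref{oddrep} disappear.

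The main obstacle is this last step. Crystalline comparison theorems are stated for smooth (log smooth) models, while here the model has isolated quadratic singularities that do contribute in the untwisted setting. I would try two routes. First, blow up the singular points to get a semistable model and show that the monodromy operator $N$ vanishes on the $\chi$-part, so the Hyodo--Kato comparison (Tsuji) yields semistable with $N=0$, i.e.\ crystalline. Alternatively, work with the isotypic projector acting on the log-crystalline cohomology and show the vanishing-cycle classes are killed by it. In both routes the needed dimension input comes from Theorem~\ref{thm:zeta_at_p} applied at $p=\ell$: there $\delta$ equals $\rk\Motive_k$ exactly when $\ell>k$ (resp.\ $\ell>k/2$).
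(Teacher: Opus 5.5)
Your overall strategy is the one behind \cite[Prop.5.23]{FSY}, which the paper simply invokes with $\chi$ in place of $\chi^+$: comparison theorems for the de Rham property, and good reduction of a compactification of $\sK$ for crystallinity. Your de Rham argument is fine. So is your odd-$k$ argument: for $\ell>k$ the reduction $\sK_{\FF_\ell}$ has no singular points, granting the good compactification from \cite{FSY}.

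The gap is the even case. You declare the ``main obstacle'' unresolved and sketch two routes (controlling $N$ on the $\chi$-part, or killing vanishing-cycle classes with the projector) without carrying out either. Moreover, the obstacle is misdiagnosed.

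For $k$ even and $\ell>k/2$, a singular point of $\sK_{\FF_\ell}$ has $y_i=\pm1$ with $\ell\mid k-2i$. Since $k-2i$ is even and $|k-2i|\le k<2\ell$, this forces $i=k/2$. These $\binom{k}{k/2}$ points are $\ZZ_\ell$-sections that are already ordinary double points over $\QQ$. Writing $y_i=\epsilon_i(1+u_i)$ with $\sum_i\epsilon_i=0$, the equation becomes $\sum_i\epsilon_iu_i^2+O(u^3)$, and the quadric $\sum_i\epsilon_iu_i^2=0$ is smooth over $\ZZ[1/2]$. So the family is equisingular along these sections: there are no vanishing cycles and no monodromy to control.

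Blowing up these sections gives a model that is smooth over $\ZZ_\ell$, not merely semistable. The exceptional divisors are smooth quadrics of even dimension $k-2$. They have no cohomology in degree $k-1$, and their $\coH^{k-2}$ is pure of weight $k-2$, so $\gr^W_{k-1}\coH^{k-1}_{\rc}$ is unchanged. Hence $V_{k,\ell}$ is a Tate twist of the $\chi$-part of $\gr^W_{k-1}\coH^{k-1}_{\rc}(\widetilde{\sK})$. That space is a subrepresentation of $\coH^{k-1}$ of a smooth proper compactification with good reduction at $\ell$, so Faltings's or Tsuji's comparison theorem gives crystallinity. Your first route is the right move; once the blow-up is done, nothing further is needed.

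The ``dimension input'' you cite would not close the gap on its own. Theorem \ref{thm:zeta_at_p} at $p=\ell$ concerns $\ell'$-adic cohomology with $\ell'\neq\ell$. Through Propositions \ref{oddrep} and \ref{evenrep}, it only shows that $V_{k,\ell'}$ is unramified at $\ell$. Passing from this to crystallinity of $V_{k,\ell}$ requires strict compatibility. The paper obtains that from potential automorphy (Corollary \ref{cor:pot_auto} and \cite[Cor.2.2]{PT}), but that corollary takes as input crystallinity for almost all $\ell$, i.e.\ the present proposition. So this route is circular unless the geometric argument above comes first.

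Finally, your argument needs $\ell$ odd, so it misses $k=2$, $\ell=2$, which the condition $\ell>k/2$ allows. That case holds only because $V_{2,\ell}=0$, which is exactly why the paper records it.
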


\begin{proof}
Note that $V_{2,\ell} = 0$,
so the assertion is empty when $k=2$.
Otherwise, this is the corresponding statement
of \cite[Prop.5.23]{FSY} for $\spm_k(p)$,
which is proved using the geometry of $\sK$
and the $p$-adic Hodge comparison theorems.
One argues verbatim as loc.\,cit.,
taking the $\chi$-isotypic parts instead of $\chi^+$.
\end{proof}

\subsubsection*{The de Rham realization}
The de Rham realization of the motive $\Motive_k$
is the middle part of the de Rham cohomology
\[ V_{k,\dR} = \big(\gr^W_{k-1}\coH^{k-1}_{\dR,\rc}(\sK/\QQ)_\chi\big)(-1). \]
It is a $\QQ$-vector space equipped with a Hodge filtration.
By \cite[Prop.4.21]{FSY} stating the Hodge numbers
for the motives underlying $\spmt_k(p)$,
and subtracting \cite[Thm.1.8]{FSY}
for the Hodge numbers of $\spm_k(p)$,
we obtain the Hodge numbers of $V_{k,\dR}$ below.
We remark that,
unlike the \'etale realizations $V_{k,\ell}$
where the detailed descriptions as Galois representations
are revealed through
the geometry of $\sK$,
the Hodge structures $V_{k,\dR}$ are investigated
mainly through an irregular connection on $\Gm$ over $\CC$,
which is the complex counterpart of $\sV_k$,
via irregular Hodge theory
(also see \cite{Chuang} for the period realization of the connection).
The weight part of the statement can also be read off
from Theorem \ref{thm:GR}.

\begin{prop}\label{prop:HodgeNumbers}
Let
$V'_{k,\dR} =
\coH^{k-1}_{\dR,\rc}(\sK/\QQ)_\chi(-1)$;
it has (Hodge) weights $\leq k+1$
and contains $V_{k,\dR}$ as its top weighted quotient.
\begin{enumerate}
\item
When $k\not\equiv 2\bmod{4}$,
we have $V'_{k,\dR} \xrightarrow{\sim} V_{k,\dR}$
and they are of dimension $\flr{(k+1)/2}$.
The Hodge numbers
$h^{p,k+1-p} = \dim\gr_F^pV'_{k,\dR} = 1$
if $p = 2i-1$ or $k+2-2i$ for $1\leq i\leq \flr{(k+3)/4}$,
and vanish otherwise.
\item
If $k\equiv 2\bmod{4}$,
then $\dim V'_{k,\dR} = k/2$
and $\dim V_{k,\dR} = (k-2)/2$.
The Hodge numbers
$h^{p,q} = \dim\gr^W_{p+q}\gr_F^pV'_{k,\dR}$ vanish
except in the following cases, in which $h^{p,q} = 1$,
\[\begin{cases}
\text{$p$ or $q = 2i-1$ for $1\leq i\leq (k-2)/4$, and $p+q=k+1$}, \\
p=q=k/2.
\end{cases}\]
\end{enumerate}
\end{prop}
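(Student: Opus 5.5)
The approach is to obtain the Hodge numbers of $V'_{k,\dR}$ by subtraction. The $\chi$-part and the $\chi^+$-part of $\coH^{k-1}_{\dR,\rc}(\sK/\QQ)$ under $\symgp_k\times\mu_2$ together form the sign-isotypic part for $\symgp_k$ alone. Its $\mu_2$-invariant piece is the $\chi^+$-part, and its $\mu_2$-anti-invariant piece is the $\chi$-part. Geometrically, the sign part with no condition on $\mu_2$ corresponds to the motive of $\sVt_k$ on the double cover, i.e.\ to the moments $\spmt_k(p)$, by the de Rham analogue of \eqref{eq:sheafV_relation} and \eqref{eq:cohV_relation}. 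So the first step is to record the decomposition
\[ \coH^{k-1}_{\dR,\rc}(\sK/\QQ)_{\mathrm{sgn}}(-1) = \coH^{k-1}_{\dR,\rc}(\sK/\QQ)_{\chi^+}(-1)\oplus V'_{k,\dR}. \]
This is compatible with the Hodge and weight filtrations, because the group acts by algebraic automorphisms. We then identify the left-hand side with the object whose Hodge numbers are computed in \cite[Prop.4.21]{FSY}, and the first summand on the right with the one in \cite[Thm.1.8]{FSY}.

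Next, we take the Hodge numbers $\dim\gr^W_{w}\gr^p_F$ for the $\spmt_k$ object from \cite[Prop.4.21]{FSY} and subtract those for $\spm_k$ from \cite[Thm.1.8]{FSY}. This is done separately for each residue of $k$ mod $4$, since both references split their answers by $k$ mod $4$ (the $\spm_k$ ones by whether $k\equiv 0\bmod 4$, through the $\delta_{4\ZZ}(k)$ term).

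The dimensions are a consistency check. By Theorem \ref{thm:zeta_at_p}, at a prime $p>k$ we have $\dim\coH^1_{\et,\rc}(\Gpb,\sV_k)=\flr{(k+1)/2}$ for $k$ odd and $k/2$ for $k$ even. The middle part has dimension $\flr{(k+1)/2}-\delta_{2+4\ZZ}(k)$, and the kernel is $\sV_k^{I_\infty}$, which is pure of weight $k$ with dimension $\delta_{2+4\ZZ}(k)$. By comparison (Proposition \ref{prop:V_K} together with the $\ell$-adic/de Rham comparison for $\sK$), this gives $\dim V'_{k,\dR}$ and $\dim V_{k,\dR}$. It also shows that for $k\equiv 2\bmod 4$ the lower-weight piece is a single class of weight $k$. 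A one-dimensional class of weight $k=2\cdot(k/2)$ coming from $\sV_k^{I_\infty}\cong\QQbar_\ell(-k/2)$ must be of Tate type, hence has Hodge type $(k/2,k/2)$. For $k\not\equiv 2\bmod 4$ there is no lower-weight piece, so $V'_{k,\dR}\to V_{k,\dR}$ is an isomorphism.

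For the weight $k+1$ part, the subtraction should leave exactly one dimension in each degree $p=2i-1$ and, by Hodge symmetry ($h^{p,q}=h^{q,p}$, the motive being polarized by \eqref{eq:motive_pairing}), in each $p=k+2-2i$. The ranges of $i$ should match the counts: $2\flr{(k+3)/4}=\flr{(k+1)/2}$ for $k\not\equiv 2$ (with the two families overlapping at the middle when $k\equiv 1\bmod 4$), and $2\cdot (k-2)/4$ when $k\equiv 2\bmod 4$.

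The main obstacle is the bookkeeping of the subtraction. The descriptions in \cite{FSY} are phrased in different forms: for $\spm_k$ there are gaps at multiples of $4$, and for $\spmt_k$ there are consecutive ranges. The boundary cases at the middle degree $p\approx (k+1)/2$ have to be matched carefully, as do the weight-$k$ classes present for $\spmt_k$ but partially for $\spm_k$ (the $\delta_{4\ZZ}$ versus $\delta_{2+4\ZZ}$ phenomenon). I would first tabulate both lists for $k=4m+r$, $r=0,1,2,3$, and check small cases such as $k=3,4,5,6$ against the modular forms of Theorem \ref{thm:EC}, whose weights $3,4,3,6$ should reflect the outer Hodge types $(0,k+1)$ after the twist by $(-1)$.
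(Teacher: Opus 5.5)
Your proposal is correct and follows the paper's own argument. You split the $\symgp_k$-sign part of $\coH^{k-1}_{\dR,\rc}(\sK/\QQ)(-1)$ into its $\chi^+$- and $\chi$-pieces (the de Rham shadow of \eqref{eq:sheafV_relation}) and subtract the Hodge numbers of \cite[Thm.1.8]{FSY} from those of \cite[Prop.4.21]{FSY}; your $\ell$-adic dimension count at $p>k$, together with Hodge symmetry placing the extra weight-$k$ class for $k\equiv 2\bmod 4$ in type $(k/2,k/2)$, is a harmless extra consistency check. One slip in your sanity check: after the twist $(-1)$ the extreme Hodge types are $(1,k)$ and $(k,1)$, not $(0,k+1)$, and the statement itself gives $h^{0,k+1}=0$.
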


\begin{cor}\label{cor:pot_auto}
The representation $\rho_{k,\ell}$
is potentially automorphic.
\end{cor}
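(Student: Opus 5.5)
We will deduce the potential automorphy from the potential automorphy theorem of Barnet-Lamb, Gee, Geraghty and Taylor for compatible systems, as in \cite[\S5]{FSY}. That theorem applies to a weakly compatible system of $\ell$-adic representations of $\Gal(\QQbar/\QQ)$ that is pure, essentially self-dual, and de Rham with \emph{regular} (distinct) Hodge--Tate weights. So the plan is to check these hypotheses for $\{\rho_{k,\ell}\}_\ell$ using the results above and then invoke the theorem. The only real issue is regularity, and it will force a separate treatment of the case $k\equiv 2\bmod 4$.

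\emph{Step 1 (compatible system).} The $\rho_{k,\ell}$ come from the \'etale realizations of the single motive $\Motive_k$. By Proposition \ref{prop:V_K}, at every prime $p$ outside a finite set independent of $\ell$ (say $p>k$, $p\neq\ell$), $\rho_{k,\ell}$ is unramified. Its Frobenius characteristic polynomial equals $M_k(p;T)$, which has coefficients in $\QQ$ independent of $\ell$ and is pure of weight $k+1$ by Theorem \ref{thm:zeta_at_p}. Self-duality up to the twist $\chi_\cyc^{-(k+1)}$ comes from the pairing \eqref{eq:motive_pairing}. The pairing is orthogonal for $k$ odd and symplectic for $k$ even, matching Propositions \ref{oddrep} and \ref{evenrep}. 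By Proposition \ref{prop:p-adicHodge}, the representations are de Rham at $\ell$ and crystalline for $\ell$ large.

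\emph{Step 2 (Hodge--Tate weights).} By the de Rham comparison, the Hodge--Tate weights of $V_{k,\ell}$ are the $p$ with $h^{p,k+1-p}\neq0$ on $V_{k,\dR}$. For $k\not\equiv 2\bmod 4$, Proposition \ref{prop:HodgeNumbers}(1) shows all Hodge numbers are $0$ or $1$, so the weights $\{2i-1,\,k+2-2i\}$ are distinct. We must check that these two families do not collide. A collision would need $2i-1=k+2-2i'$, i.e.\ $k+3=2(i+i')$. For $k$ odd this would require $(k+3)/2=i+i'$ with $i,i'\le\lfloor (k+3)/4\rfloor$. If $k\equiv 1\bmod 4$, then $(k+3)/2$ is even and equal to $2\lfloor (k+3)/4\rfloor$, so $i=i'=(k+3)/4$. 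That gives the single weight $(k+1)/2$ occurring once, consistent with the dimension count $(k+1)/2$. For $k\equiv 0\bmod 4$ there is no parity clash at all. In the case $k\equiv2\bmod4$, the quotient $V_{k,\dR}$ of dimension $(k-2)/2$ retains only the weight $k+1$ part, with $h^{p,q}=1$ for $p$ or $q=2i-1$. The extra class $p=q=k/2$ lies in $E$, lower weight, and has been removed. Hence the weights are again distinct. So in all cases the system is regular.

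\emph{Step 3 (apply the theorem).} Irreducibility is not required: the BLGGT theorem (in the form used in \cite[Thm.5.27]{FSY}) gives potential automorphy for regular, pure, essentially self-dual weakly compatible systems, after decomposing into irreducible constituents. This yields a finite totally real Galois extension $F/\QQ$ over which each irreducible constituent of $\rho_{k,\ell}|_{G_F}$ is automorphic. The sign of the self-duality needs attention here: orthogonal with odd multiplier for $k$ odd, symplectic for $k$ even. It has to be compatible with the ``totally odd'' condition, and for the orthogonal case we would cite the form of the theorem permitting either sign. The main obstacle I expect is exactly this bookkeeping in Step 3, together with confirming the Hodge distinctness in Step 2. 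To handle it I would copy the argument of \cite[\S5]{FSY} verbatim, replacing $\chi^+$ by $\chi$, and checking only that the Hodge numbers in Proposition \ref{prop:HodgeNumbers} remain multiplicity free. The case $k=2$, where $V_{2,\ell}=0$, is vacuous.
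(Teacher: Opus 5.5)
Your strategy is the paper's: verify purity, regularity (all nonzero Hodge numbers equal to one by Proposition~\ref{prop:HodgeNumbers}, with the weight-$k$ class discarded when $k\equiv 2\bmod 4$) and odd essential self-duality for the weakly compatible system $\{\rho_{k,\ell}\}_\ell$. You then invoke a potential automorphy theorem that has no irreducibility hypothesis. Two points need correcting.

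First, the theorem you actually use is Theorem~A of Patrikis--Taylor, which is what the paper cites: pure, regular, totally odd polarizable weakly compatible systems are potentially automorphic, with no irreducibility assumption. It is not a form of Barnet-Lamb--Gee--Geraghty--Taylor. Their compatible-system results assume irreducibility of the $\rho_{k,\ell}$, and replacing irreducibility by purity is exactly Patrikis--Taylor's contribution. Since you do check purity, your argument goes through once the citation is fixed.

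Second, your sign bookkeeping in Step~3 is wrong, although the conclusion survives. The similitude character is $\chi_{\cyc}^{-(k+1)}$, whose value on complex conjugation is $(-1)^{k+1}$. So for $k$ odd the multiplier is \emph{even}, not odd, and the pairing~\eqref{eq:motive_pairing} is symmetric. For $k$ even the multiplier is odd and the pairing is alternating. In both cases the symmetry sign of the pairing equals the multiplier's value on complex conjugation. That is precisely the ``totally odd'' condition, item~(iii) in the paper's proof, and it holds automatically for the Poincar\'e-type polarization of a pure motive of weight $k+1$. There is no problematic case requiring a version of the theorem ``permitting either sign.'' Nor could you simply cite such a version, since the available potential automorphy theorems require this oddness.
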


\begin{proof}
As in \cite[\S 5.3.3]{FSY},
we apply the main automorphy result of \cite{PT}
to the weakly compatible system $\{\rho_{k,\ell}\}_\ell$ for $k$ fixed.
By Proposition \ref{prop:p-adicHodge},
$\rho_{k,\ell}$ is de Rham
above $\ell$,
and is crystalline when $\ell$ is large enough.
We have the three properties.
\begin{itemize}
\item[(i)] (Purity)
For $p$ odd and $p>k$ if $k$ is odd (resp., $p>k/2$ if $k$ is even),
$\rho_{k,\ell}$ is unramified at $p$,
and the roots of $\rho_{k,\ell}(\Frob_p)$ of the geometric Frobenius 
are Weil $p^{(k+1)}$-numbers.
\item[(ii)] (Regularity)
The nonvanishing Hodge numbers $h^{i,j}$
of (the underlying motive of) $\rho_{k,\ell}$
are equal to one.
\item[(iii)] (Odd essential self-duality)
$\rho_{k,\ell}$ is $(-1)^{k+1}$-symmetric
with similitude character $\chi_\cyc^{-k-1}$;
the determinant $\{\det\rho_{k,\ell}\}_\ell$
also forms a weakly compatible system.
\end{itemize}
Therefore, by \cite[Thm.A]{PT},
$\rho_{k,\ell}$ is potentially automorphic.
\end{proof}

Recall the polynomial $Z_k(p;T)$,
the set $\Theta_k(p)$ and the numbers $n_k^\pm(p)$
defined in \eqref{eq:localfactor_Z},
\eqref{eq:Theta} and \eqref{eq:n_k}, respectively.
Let $\BB_\crys$ and $\BB_\st$ be
Fontaine's $p$-adic crystalline and semistable period rings over $\QQ_p$.
They are filtered rings with an absolute Frobenius
(semilinear) automorphism $\varphi$.
We have the following strengthening of Proposition \ref{prop:p-adicHodge}.

\begin{cor}
Let $p$ be an odd prime.
Choose a place of $\QQbar$ above $p$,
and consider $V_{k,p}$ as a representation
of the corresponding decomposition subgroup $\Gal(\QQbar_p/\QQ_p)$.
\begin{enumerate}
\item
Suppose $k$ is odd.
Then $V_{k,p}$ is crystalline over $L = \QQ_p(\sqrt{-p})$
and
\begin{equation}\label{eq:det_phi_odd}
\det\big(1-\varphi T\mid (V_{k,p}\otimes \BB_\crys)^{\Gal(\QQbar_p/L)}\big)
= Z_k(p;T)\prod_{a\in\Theta_k(p)}
\Big( 1- \Big(\frac{(-1)^{(1+ap)/2}2a'}{p}\Big) p^{\frac{k+1}{2}}T \Big)
\end{equation}
where
$a' = ap^{-v_p(a)}$ is the prime-to-$p$ part of $a$.
If $p>k$, $V_{k,p}$ is crystalline over $\QQ_p$
and one can replace $L$ by $\QQ_p$ in the equation.
\item
Suppose $k$ is even.
Then $V_{k,p}$ is semistable over $\QQ_p$
and
\begin{align}\label{eq:det_phi_even}
\det\big(1-\varphi T\mid
	(V_{k,p}\otimes \BB_\st)^{\Gal(\QQbar_p/\QQ_p)}\big)
&= Z_k(p;T)\big(1-p^{\frac{k}{2}}T\big)^{-\delta_{2+4\ZZ}(k)} \\
&\nonumber \hspace{20pt}\cdot
\big(1-p^{\frac{k+2}{2}}T\big)^{n_k^+(p)}
\big(1-(-1)^{\frac{p-1}{2}}p^{\frac{k+2}{2}}T\big)^{n_k^-(p)}.
\end{align}
If $p>k/2$, $V_{k,p}$ is crystalline over $\QQ_p$
and one can replace $\BB_\st$ by $\BB_\crys$ in the equation.
\end{enumerate}

In particular,
the $p$-adic Newton polygons of
the polynomials \eqref{eq:det_phi_odd} and \eqref{eq:det_phi_even}
are above the Hodge polygon of $V_{k,\dR}$,
and the polygons have the same endpoints.
\end{cor}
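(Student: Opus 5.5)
The plan is to read off the local structure of $V_{k,p}$ at $p$ from Propositions \ref{oddrep} and \ref{evenrep}, which were proved via Picard--Lefschetz on a model of $\sK$ over $\ZZ_p$, and combine it with $p$-adic Hodge theory for the geometry of $\sK$. We follow the argument of \cite[\S5.3]{FSY} for $\spm_k(p)$ verbatim, but take $\chi$-isotypic parts throughout.

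First, the potentially semistable property. The compactification of $\sK$ over $\ZZ_p$ used in \cite{FSY} has only ordinary quadratic singularities in its special fibre. After blowing up, or after the base change which for odd $k$ is the ramified quadratic extension $L=\QQ_p(\sqrt{-p})$ (the standard one for quadratic singularities in odd relative dimension $k-2$), one gets semistable reduction. For even $k$ the singularities have even relative dimension, so one obtains semistable reduction over $\QQ_p$ itself.

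The $C_{\rm st}$ comparison (Tsuji) then identifies $D_{\st}$ of the $\chi$-part of $\coH^{k-1}_{\rc}$ with log-crystalline cohomology of the special fibre. Its $\varphi$-characteristic polynomial equals that of Frobenius on the $\ell$-adic weight-monodromy data. Weight-monodromy holds here because the degeneration is concrete: the monodromy is given by Picard--Lefschetz with $N^2=0$.

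Hence
\[
\det(1-\varphi T\mid D_{\st}(V_{k,p}))=\det\bigl(1-F_pT\mid \mathrm{WD}(V_{k,\ell})\bigr)
\]
for any $\ell\neq p$. This is an $\ell$-independence statement, obtained by comparing both sides with the same Rapoport--Zink/Steenbrink-type spectral sequence. I expect this to be the main obstacle, though it is handled in \cite{FSY} and one only checks that the $\chi$-projector commutes with all the maps.

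Next, compute the Weil--Deligne side from the earlier propositions.

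For $k$ odd, Proposition \ref{oddrep} gives $V=M\oplus E$, with $M$ having Frobenius polynomial $Z_k(p;T)$. Each $\eps_a\otimes\chi_\cyc^{-(k+1)/2}$ becomes unramified over $L$, because $\QQ_p(\sqrt{\pm 2ap})$ and $L$ differ by an unramified twist. On that line Frobenius of $L$ acts by $p^{(k+1)/2}$ times the sign $\bigl(\frac{(-1)^{(1+ap)/2}2a'}{p}\bigr)$, computed by comparing $\sqrt{(-1)^{(1+ap)/2}2ap}$ with $\sqrt{-p}$ and stripping even powers of $p$. This gives \eqref{eq:det_phi_odd}. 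If $p>k$ then $\Theta_k(p)=\emptyset$, and good reduction gives crystallinity over $\QQ_p$.

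For $k$ even, Proposition \ref{evenrep} gives $V^{I_p}=U^\perp$, with Frobenius polynomial $Z_k(p;T)(1-p^{k/2}T)^{-\delta_{2+4\ZZ}(k)}$ after removing $E$. The quotient $V/U^\perp\cong U^\vee(-k-1)$ is identified via $N$ with a twist $(-1)$ of the part of $\sV_k^{I_\infty}$ of Theorem \ref{thm:zeta_at_p}(2a) coming from $\Xi_k(p)$. This contributes the factors $(1-p^{(k+2)/2}T)^{n_k^+}$ and $(1-(-1)^{(p-1)/2}p^{(k+2)/2}T)^{n_k^-}$. For $p>k/2$ one has $U=0$, and the crystalline statement follows.

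Finally, the Newton-above-Hodge assertion with equal endpoints is Mazur's inequality for weakly admissible filtered $\varphi$-modules, applied to $D_{\crys}$ (over $L$) or $D_{\st}$. The Hodge polygon is given by Proposition \ref{prop:HodgeNumbers}. The endpoints agree because $t_N=t_H$ for weakly admissible modules.
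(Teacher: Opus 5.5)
Your $\ell$-adic computations from Propositions \ref{oddrep} and \ref{evenrep} match what the paper does, as does the closing appeal to weak admissibility. These are: unramifiedness of the $\eps_a$ over $L$; the quotient $V/U^\perp$ obtained from $U$ via $\rN\circ\Frob_p=p\Frob_p\circ\rN$; and $\Theta_k(p)=\emptyset$ for $p>k$, resp.\ $U=0$ for $p>k/2$.

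The gap is the step you flag yourself. You must identify the Weil--Deligne representation that Fontaine's functors attach to $V_{k,p}$ with that of $V_{k,\ell}$, $\ell\neq p$. Both Frobenius polynomials depend on this. For odd $k$, crystallinity over $L$ does too: a semistable model over $\mathcal{O}_L$ only gives semistability, and $\rN=0$ on the $p$-adic side is part of what must be compared.

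Your proposed justification does not deliver this. Comparing the $\ell$-adic and $p$-adic weight spectral sequences identifies the Frobenius polynomials of the $E_1$-terms (Katz--Messing), hence only the alternating products along each row. The graded pieces of cohomology are $E_2$-terms. Their Frobenius polynomials depend on the ranks of the $d_1$-differentials (Gysin and restriction maps), and the $p$-adic monodromy filtration needs its own weight--monodromy argument. All of this would moreover have to be done for $\gr^W_{k-1}$ of the compactly supported cohomology of the open variety $\sK$, with horizontal boundary and the $\chi$-projector. Nor is it carried out in \cite{FSY}: the paper explicitly remarks that the direct route through rigid cohomology and $p$-adic Hodge theory yields only weaker statements (\cite[Prop.5.23, Cor.5.27]{FSY}).

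The missing idea is Corollary \ref{cor:pot_auto}. By \cite[Cor.2.2(ii)]{PT}, potential automorphy makes $\{\rho_{k,\ell}\}_\ell$ \emph{strictly} compatible. Hence the Weil--Deligne representation of $V_{k,p}|_{\Gal(\QQbar_p/\QQ_p)}$ equals that of $V_{k,\ell}$. This gives at once crystallinity over $L$ (odd $k$), semistability over $\QQ_p$ (even $k$), and both Frobenius polynomials; your local computations then finish the argument.

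Incidentally, if you actually carry out your sign comparison with $\sqrt{-p}$ for $a\in\Theta_k^-(p)$, you pick up an extra factor $\bigl(\frac{-1}{p}\bigr)$. For instance, for $k=p=3$ one has $\QQ_3(\sqrt6)=\QQ_3(\sqrt{-3})$, because $-2$ is a square in $\QQ_3$. So $\eps_1$ is trivial on $\Gal(\QQbar_3/L)$ and the Frobenius eigenvalue on $E$ is $+9$, whereas the displayed formula for odd $k$ contains the factor $1+9T$. The displayed signs are the correct ones for $\QQ_p(\sqrt{p})$ rather than $\QQ_p(\sqrt{-p})$. Since the paper's proof also asserts this step without computation, this is an issue with the displayed formula rather than with your route.
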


\begin{proof}
(Cf.\,\cite[Rem.5.41]{FSY}.)
A consequence of the potential automorphy Corollary \ref{cor:pot_auto}
is that the system $\{\rho_{k,\ell}\}_\ell$ is indeed strictly compatible
(\cite[Cor.2.2(ii)]{PT}).
Thus the Weil-Deligne representation $(r_\ell,\rN)$
of $\QQ_p$
associated with $V_{k,\ell}$ for $\ell\neq p$
(cf.\,\cite[\S 5.3.1]{FSY})
equals that of $V_{k,p}$
constructed via the $p$-adic period rings (Fontaine's functors).

Suppose $k$ is odd.
By Proposition \ref{oddrep}(i),
$V_{k,\ell}$ is unramified over $L$
and $\det(1-\Frob_p T\mid V_{k,\ell})$
equals the right-hand side of \eqref{eq:det_phi_odd}.
Here we regard $\Frob_p$
as in the quotient of $\Gal(\QQbar_p/L)$.

Assume $k$ is even.
By Proposition \ref{evenrep},
the associated Weil representation $r_\ell$ is unramified.
Let $\{e_i\}$ be a basis of the subspace $U \subset V_{k,\ell}^{I_p}$
in Proposition \ref{evenrep}.
Then there are elements $\{e_i'\}$, inducing a basis of $V_{k,\ell}/U^\perp$,
such that $\rN e_i' = e_i$.
Since $\rN\circ \Frob_p = p\Frob_p\circ \rN$
and
\[ \det(1-\Frob_pT\mid U)
= (1-p^{k/2}T)^{n_k^+(p)} (1-(-1)^{(p-1)/2}p^{k/2}T)^{n_k^-(p)} \]
by Theorem \ref{thm:zeta_at_p}(ii)(a),
the equality \eqref{eq:det_phi_even} follows.

On the other hand,
the associated filtered $\varphi$-module of $V_{k,p}$
are (weakly) admissible.
Therefore the attached Newton polygon lies above the Hodge polygon,
and the two have the same endpoints.

We remark that one can deduce weaker statements
using rigid cohomology and $p$-adic Hodge theory more directly,
cf.\,\cite[Prop.5.23, Cor.5.27]{FSY}.
\end{proof}

\subsection{Modularity}\label{sect:modularity}
Here we prove Theorem \ref{thm:EC}.
In the cases $k=3,4,6$,
the associated Galois representations
$\rho_{k,\ell}$ of $\Motive_k$ are of rank 2.
The modularity is obtained similarly to that in \cite[\S 4.8]{Serre-GQ}.
For the rank 3 representation $\rho_{5,\ell}$,
we argue as in \cite[\S 4.7]{Yun}.

In all cases,
the exponents $c$ of 2 in the level $N$
of the associated cuspidal modular forms
are at most $8$
by the last paragraph of \cite[p.216]{Serre-GQ}.

Let $\chi_{\cyc,\ell}$ denotes the $\ell$-adic cyclotomic character
of $\Gal(\QQbar/\QQ)$.

\subsubsection*{The case $k=3$}
Due to the existence of the symmetric perfect pairing
\eqref{eq:motive_pairing},
the orthogonal representation
$\rho_{3,\ell}\otimes\chi_{\cyc,\ell}$ is modular (with CM)
of Serre weight 3
by \cite[Thm.1.3]{Livne}.
Let $f$ be the corresponding cusp form.
By a theorem of Fontaine
(\cite[Thm.2.6]{Edixhoven}),
the reduction
$\overline{\rho}_{3,\ell}\colon \Gal(\QQbar/\QQ) \to \GL(\FF_\ell)$
mod $\ell$ of $\rho_{3,\ell}$ is absolutely irreducible
for those $\ell$ such that the Fourier coefficient $a_f(\ell) = 0$
(there are infinitely many since $f$ has CM).
Now we apply the strong form of Serre's modularity conjecture
to pin down $f$.
By the recipe in \cite{Serre-GQ},
we look for a normalized Hecke eigenform
with coefficients in $\QQ$
among those of weight $3$
and level $2^c\cdot 3$ for some $0\leq c\leq 8$.
In \texttt{{\LMFDB}},
it results in 8 matches.
The conditions
\[ a_f(7) = -\tm_3(7)/7 = -2,
\quad
a_f(13) = -\tm_3(13)/13 = -22 \]
($a_f(5)=a_f(11)=0$)
single out one such form $f_{\text{48.3.e.a}}$
(which has CM by $\QQ(\sqrt{-3})$).

\begin{rem}\label{rem:Evans3}
Let
$g = f_{\text{12.3.c.a}} = f_{\text{48.3.e.a}}\otimes\big(\frac{-4}{\bullet}\big)$.
It is twist minimal and satisfies
\[ a_g(p) = (-1)^{(p-1)/2}\tm_3(p)/p. \]
The equality is the statement in \cite[p.523]{Evans-hyper}.
\end{rem}

\subsubsection*{The case $k=4$}
Applying \cite[Th.6]{Serre-GQ} directly
to the symplectic representation $\rho_{4,\ell}\otimes\chi_{\cyc,\ell}$,
we look for a normalized Hecke eigenform
among those of weight $4$, level $2^c$ for some $0\leq c\leq 8$,
and with coefficients in $\QQ$.
In \texttt{{\LMFDB}},
it results in 22 matches.
The conditions
\[ a_f(3) = -\tm_4(3)/3 = -4,
\quad
a_f(5) = -\tm_4(5)/5 = -2 \]
single out one such form $f_{\text{8.4.a.a}}$
We remark that this form is twist minimal.

\subsubsection*{The case $k=6$}
Applying directly \cite[Th.6]{Serre-GQ}
to the symplectic representation $\rho_{6,\ell}\otimes\chi_{\cyc,\ell}$,
we look for a normalized Hecke eigenform
among those of weight $6$, level $2^c\cdot 3, 0\leq c\leq 8$,
with coefficients in $\QQ$.
In \texttt{{\LMFDB}},
it results in 48 matches.
The conditions
\[ a_f(5) = -\tm_6(5)/5 - 5^2 = -34,
\quad
a_f(7) = -\tm_6(7)/7 - 7^2 = -240 \]
single out one such form $f_{\text{24.6.a.a}}$.
We remark that this form is twist minimal.

\subsubsection*{The case $k=5$} 
Consider the Galois representations
$\rho_{5,\ell}$
constructed in Theorem \ref{oddrep}.
Let
\begin{equation}\label{eq:repre_r}
r_{\ell}=\rho_{5,\ell}\otimes \chi_{\cyc,\ell}^3\otimes\Bigl(\frac{15}{\bullet}\Bigr)\colon
\Gal (\QQbar/\QQ) \longrightarrow \SO(V_{5,\ell}).
\end{equation}
It satisfies the following properties:
\begin{itemize}
\item
The representations $r_\ell$ is unramified at $p$
for every prime $p\neq 2, 3, 5, \ell$.
In this case,
the trace of the geometric Frobenius is given by
\begin{equation}\label{eq:trace_r_ell}
\tr (r_\ell(\Frob_p))=\Bigl(\frac{15}{p}\Bigr)\frac{-\tm_5(p)}{p^3}.
\end{equation}
\item
It is tamely ramified at $p=3,5$.
\item
It is de Rham at $p=\ell$,
and has Hodge-Tate weight $(2,0,-2)$.
It is indeed crystalline if $\ell >5$.
\item
The image $r_\ell(\iota)$ of the complex conjugation $\iota$ is
conjugate to $\diag(1,-1,-1)$.
\end{itemize}

Fix the choices of the identifications
\[ \SO(V_{5, \ell}\otimes_{\QQ_\ell}\QQbar_\ell)
\simeq \SO(3)(\QQbar_\ell)\simeq \PGL_2(\QQbar_\ell). \]
Let
\[ r_\ell'\colon \Gal (\QQbar/\QQ) \longrightarrow \PGL_2(\QQbar_\ell) \]
be the \textit{same} representation as $r_\ell$
under the identifications.
In particular,
\begin{equation}\label{eq:r'odd}
r'_\ell(\iota) \sim \begin{pmatrix}
1 \\
& -1 \end{pmatrix}.
\end{equation}

\begin{lem}\label{ControlFrob}
Suppose $\ell >5$.
For $p=3, 5$,
the representation $r'_\ell$ is tamely ramified at $p$,
and it sends a generator of the tame quotient $I_p^t$
of the inertia subgroup
$I_p \subset \Gal(\QQbar_p/\QQ_p)$
to a matrix conjugate to
$\diag(1,-1)$
(up to scaling).
Moreover,
the image $r'_\ell(\Gal(\QQbar_p/\QQ_p))\subset \PGL_2(\QQbar_\ell)$
is conjugate to a subgroup of the diagonal matrices
(up to scaling).
\end{lem}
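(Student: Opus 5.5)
The plan is to read off the local structure of $r_\ell$ at $p=3,5$ from Proposition \ref{oddrep}(i) with $k=5$, and then transport it through $\SO(3)\simeq\PGL_2$.

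\emph{Step 1: the sets $\Theta_5(p)$.} For $k=5$ we have $\Theta_5(3)=\{1\}$, since $jp\le 5$ forces $j=1$, and likewise $\Theta_5(5)=\{1\}$. In both cases $v_p(1)=0$ is even, so $1\in\Theta_5^-(p)$. Hence $\eps_1$ is the character of $\QQ_p(\sqrt{(-1)^{(1+p)/2}2p})$, which is ramified.

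\emph{Step 2: the decomposition of $V_{5,\ell}$.} Proposition \ref{oddrep}(i) gives, as a $\Gal(\QQbar_p/\QQ_p)$-representation,
\[ V_{5,\ell} = M\oplus E, \qquad E=\eps_1\otimes\chi_\cyc^{-3}. \]
The summand $M$ has rank $2$, and $M\otimes\QQbar_\ell\cong\coH^1_{\et,\rc}(\Gpb,\sV_5)$ is a $\Gal(\FFbar_p/\FF_p)$-module. Since the rank is $3$ by Theorem \ref{thm:zeta_at_p}, $\delta=3-1=2$, consistent with this. In particular $M$ is unramified and $I_p$ acts on $E$ through the quadratic character $\eps_1$, so $V_{5,\ell}$ is tamely ramified. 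After twisting by $\chi_\cyc^3\otimes(\frac{15}{\bullet})$, the inertia $I_p$ acts on $r_\ell$ by $\diag(1,1,\eps_1\cdot(\tfrac{15}{\bullet})|_{I_p})$ with respect to $M\oplus E$. The character $(\frac{15}{\bullet})$ is itself quadratic and ramified at $3$ and $5$, so the twist also acts on $M$.

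\emph{Step 3: inertia image in $\PGL_2$.} Let $\lambda=(\frac{15}{\bullet})|_{I_p}$ and $\mu=\eps_1|_{I_p}$, both quadratic and tamely ramified. A generator $\sigma$ of the tame inertia acts by $\diag(\lambda(\sigma),\lambda(\sigma),\lambda(\sigma)\mu(\sigma))=\diag(-1,-1,1)$. The image lies in $\SO$, so its determinant is $1$; this forces $\lambda(\sigma)=-1$ and $\mu(\sigma)=-1$, and the determinant condition serves as a consistency check. Under $\SO(3)\simeq\PGL_2$, the element $\diag(-1,-1,1)$ corresponds to the class of $\diag(1,-1)$: rotation by $\pi$ about an axis is the image of an order-$4$ element of $\SL_2$, i.e.\ of $\diag(i,-i)$. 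This is the first claim of the lemma.

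\emph{Step 4: the full decomposition group.} The image $r_\ell(\Gal(\QQbar_p/\QQ_p))$ preserves the orthogonal splitting $M\oplus E$ and acts on $E$ by a character, so it lies in $S(\mathrm{O}(M)\times\mathrm{O}(E))$. In $\PGL_2$, the stabilizer of a nonisotropic line (the line $E$) is the normalizer of a torus. To land inside the torus itself, the image must lie in $\SO(M)$ on $M$, i.e.\ act on $E$ with determinant $1$; so I need the character on $E$ to be trivial on the whole decomposition group. I expect this to be the main obstacle. My first attempt is to check that $\det r_\ell=1$ and show directly that the quadratic character $\eps_1\cdot(\frac{15}{\bullet})$ of $\Gal(\QQbar_p/\QQ_p)$ is trivial. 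By local class field theory this means checking that the two square classes $(-1)^{(1+p)/2}2p$ and $15$ agree in $\QQ_p^\times/(\QQ_p^{\times})^2$. For $p=3$: $(-1)^2\cdot 6=6$, and $6\cdot 15=90=9\cdot10$ with $10\equiv1 \bmod 3$, a square. For $p=5$: $(-1)^3\cdot 10=-10$, and $-10\cdot15=-150=25\cdot(-6)$ with $-6\equiv4 \bmod 5$, a square. So $r_\ell$ acts trivially on $E$. The image then sits in $\SO(M)$, which is a torus fixing $E$, and corresponds to the diagonal torus in $\PGL_2$. If the cyclotomic twist were to contribute, it is trivial on $E$ only after combination, but the twist $\chi_\cyc^3\cdot\chi_\cyc^{-3}$ cancels exactly.
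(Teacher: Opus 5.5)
Your proof is correct. The inertia part matches the paper's; the difference lies in how you control Frobenius.

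The paper argues inside $\PGL_2$. From $\phi_p^{-1}\sigma\phi_p=\sigma^p$ it puts $r'_\ell(\phi_p)$ in the normalizer of the diagonal torus, so $r'_\ell(\phi_p)$ is diagonal or antidiagonal. It excludes the antidiagonal case numerically: that case would force $\tr(r_\ell(\phi_p)\mid M)=0$, while $Z_5(3;T)=1+6T+729T^2$ and $Z_5(5;T)=1+150T+15625T^2$ show this trace is nonzero.

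You instead get the normalizer of a torus directly, as the stabilizer of the orthogonal splitting $M\oplus E$. You then exclude the non-identity component by showing that $r_\ell$ acts trivially on $E$, using the square-class identity $\QQ_p(\sqrt{(-1)^{(1+p)/2}2p})=\QQ_p(\sqrt{15})$ for $p=3,5$. Your computations ($90=9\cdot 10$ in $\QQ_3$, $-150=25\cdot(-6)$ in $\QQ_5$) are right.

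The two routes trade different inputs:
\begin{itemize}
\item Your route needs the exact Kummer class of $\eps_1$ from Proposition \ref{oddrep}(i), not just its ramification, but no Frobenius traces. It also gives the slightly sharper conclusion $r_\ell|_{\Gal(\QQbar_p/\QQ_p)}\cong (r_\ell|_M)\oplus\mathbf{1}$ with $r_\ell|_M$ valued in $\SO(M)$.
\item The paper's route needs only that $\eps_1$ is ramified, plus one nonvanishing trace.
\end{itemize}

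The triviality on $E$ can also be obtained without the square classes. We have $\det r_\ell=1$, and $\det(F_p\mid M)=p^6$ by \eqref{eq:det_odd}. Hence $\det(r_\ell|_M)=\det M\cdot\chi_\cyc^{6}$ is unramified and trivial on Frobenius, which forces the character on $E$ to be trivial.

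Two small slips to fix:
\begin{itemize}
\item The formula $\diag(1,1,\eps_1(\frac{15}{\bullet})|_{I_p})$ at the end of Step 2 is wrong, because the twist by $(\frac{15}{\bullet})$ acts on $M$ as well, as you note in the next sentence.
\item In Step 3 the condition $\det=1$ only gives $\lambda(\sigma)=\mu(\sigma)$. The value $-1$ must come from the ramification of $\eps_1$ (Step 1) and of $(\frac{15}{\bullet})$ at $p$, both of which you already have.
\end{itemize}
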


\begin{proof}
Since $r_\ell$ is tamely ramified at odd primes,
the same holds for $r_\ell'$.
For $p=3, 5$,
choose a topological generator $\sigma$ of $I_{p}^t$.
Since $\dim_{\QQ_\ell}V_{5,\ell}^{I_p}=2$,
$\rho_\ell(\sigma)$ is similar to $\diag(1,1,-1)$.
As a result, $r_\ell(\sigma)$ is similar to $\diag(-1,-1,1)$
in $\SO(3)({\QQbar_\ell})$,
and hence $r'_\ell(\sigma)$ is similar to
$\diag(1,-1)$ in $\PGL_2({\QQbar_\ell})$.
In the following,
we choose a frame such that
$r'_\ell(\sigma) = \diag(1,-1)$
(up to scaling).

Let $\phi_p \in \Gal(\QQbar_p/\QQ_p)$
be a lifting of the geometric Frobenius.
We have $\phi_p^{-1}\sigma\phi_p=\sigma^p$ in $I_{p}^t$,
and hence $r'_\ell(\phi_p)$ is equal to 
$\bigl( \begin{smallmatrix}
a&0\\ 0&d
\end{smallmatrix} \bigr)$
or
$\bigl( \begin{smallmatrix}
0&b\\ c&0
\end{smallmatrix} \bigr)$.
In these two cases,
$r_\ell(\phi_p)$ are then similar to
\[ \begin{pmatrix}
ad^{-1} \\
& a^{-1}d \\
&& 1 \end{pmatrix},
\quad
\begin{pmatrix}
& bc^{-1} \\
b^{-1}c \\
&& -1 \end{pmatrix}, \]
respectively,
in which the actions on the two-dimensional subspace
$(\QQbar_\ell^3)^{\sigma +1}$ occor in the upper-left parts.
Observe that
$\det(1-F_pT\mid \coH^1_{\et,\rc}(\Gpb,\sV_5)^{I_p}) = Z_5(p;T)$ are 
$$
  1+6T+729T^2 \quad \mbox{and} \quad  1+150T+15625T^2
$$ 
for $p=3$ and $5$, respectively.
This implies in particular that
$\tr(r_\ell(\phi_p)\mid (\QQbar_\ell^3)^{\sigma +1}) \neq 0$.
Hence, $r'_\ell(\phi_p) = \diag(a,d)$,
and the proof is completed.
\end{proof}

\begin{prop}[Cf.\,{\cite[Lemma 4.7.6]{Yun}}]
\label{lift}
For every prime $\ell > 5$, there exists a lifting
\[ \widetilde{r}_\ell \colon \Gal(\QQbar/\QQ) \longrightarrow \GL_2(\QQbar_\ell) \]
of $r'_\ell\colon \Gal(\QQbar/\QQ) \to \PGL_2(\QQbar_\ell)$
satisfying the following properties:
\begin{itemize}
    \item For prime $p\neq 2, 3, 5, \ell$, the representation $\widetilde{r}_\ell$ is unramified at $p$.
    \item For prime $p=3, 5$, the representation $\widetilde{r}_\ell$ is tamely ramified at $p$,
and it sends a generator of $I_p^t$
to a matrix conjugate to $\diag(1,-1)$.
\item
$\widetilde{r}_\ell$ is crystalline at $\ell$
and its Hodge-Tate weight equals $(2,0)$.
\item
$\det\widetilde{r}_\ell = \big(\frac{-15}{\bullet}\big)\otimes\chi_{\cyc,\ell}^{-2}$.
\end{itemize}
We then have
$r_\ell = \Sym^2\widetilde{r}_\ell \otimes \det^{-1}\widetilde{r}_\ell$
over $\QQbar_\ell$.
\end{prop}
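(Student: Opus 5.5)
We follow the strategy of \cite[Lemma 4.7.6]{Yun}, which rests on Tate's theorem that $\coH^2(\Gal(\QQbar/\QQ),\QQbar_\ell^\times)=0$ (equivalently $\coH^2(\Gal(\QQbar/\QQ),\QQ/\ZZ)=0$). By this theorem the projective representation $r'_\ell$ admits some continuous lift $\widetilde{r}^{(0)}_\ell\colon\Gal(\QQbar/\QQ)\to\GL_2(\QQbar_\ell)$, and any two lifts differ by a continuous character. The plan is to start from such a lift and twist it by a character $\lambda$ so that all the local conditions hold at once.

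\emph{Local adjustments.} Around each prime $p$ we choose local lifts of $r'_\ell|_{\Gal(\QQbar_p/\QQ_p)}$ with the desired properties.
\begin{itemize}
\item For $p\neq 2,3,5,\ell$, the representation $r'_\ell$ is unramified at $p$. Hence $r'_\ell(I_p)=1$ and $\widetilde{r}^{(0)}_\ell|_{I_p}$ is a scalar character $\mu_p$ of $I_p$.
\item For $p=3,5$, Lemma \ref{ControlFrob} shows that the image of the decomposition group is diagonal in $\PGL_2$. Hence $\widetilde{r}^{(0)}_\ell|_{\Gal(\QQbar_p/\QQ_p)}$ is a sum of two characters $\alpha\oplus\beta$ whose ratio on $I_p$ is tame of order $2$. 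One can then twist so that on $I_p$ it becomes $1\oplus\epsilon$, with $\epsilon$ the quadratic character of $I_p^t$; this sends a generator of $I_p^t$ to a conjugate of $\diag(1,-1)$.
\item At $\ell$, we use a result of Wintenberger/Patrikis on lifting de Rham projective representations: a lift of a de Rham (resp.\ crystalline) projective representation can be twisted to be de Rham (resp.\ crystalline). Since $r_\ell=\mathrm{ad}^0$ of any lift is crystalline with Hodge--Tate weights $(2,0,-2)$, a lift has Hodge--Tate weights differing by $2$. After twisting by a power of $\chi_{\cyc,\ell}$ times a finite-order character, they become $(2,0)$ with crystalline restriction.
\end{itemize}
Each local correction $\mu_p^{-1}$ (on inertia) is a character of $I_p$ that extends to $\Gal(\QQbar_p/\QQ_p)$. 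By class field theory, a global character $\lambda$ of $\Gal(\QQbar/\QQ)\cong$ (idèle class group) realizing prescribed restrictions to $\ZZ_p^\times$ for all $p$ exists precisely when the product of those restrictions is trivial on $\{\pm1\}$. Since $\widehat{\ZZ}^\times\times\RR_{>0}$ modulo nothing gives $\mathbb{A}^\times/\QQ^\times$, this constraint is only a sign condition at $-1$, and it is absorbed by the freedom at the prime $2$, where we impose no condition. So we set $\widetilde r_\ell=\widetilde r^{(0)}_\ell\otimes\lambda$. Only finitely many primes need nontrivial corrections, because $\widetilde r^{(0)}_\ell$ is continuous and hence unramified almost everywhere.

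\emph{Determinant and the final identity.} Once the local conditions hold, $\det\widetilde r_\ell\cdot\chi_{\cyc,\ell}^{2}$ is a character that is unramified outside $2,3,5$ and crystalline of Hodge--Tate weight $0$ at $\ell$, so it has finite order. On $I_3,I_5$ it is the quadratic character, since the eigenvalues are $1,-1$. Its parity is determined by \eqref{eq:r'odd}: $\widetilde r_\ell(\iota)$ has eigenvalues $\pm1$, so $\det(\iota)=-1$. We then use the remaining freedom at $2$ (twisting by characters of conductor a power of $2$, which changes the determinant by squares) to arrange that the character equals $\bigl(\frac{-15}{\bullet}\bigr)$. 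The delicate point is that a twist by $\lambda$ multiplies the determinant by $\lambda^2$. I expect matching the $2$-part of the determinant exactly to be the main obstacle. I would handle it by comparing Frobenius traces: $\Sym^2\widetilde r_\ell\otimes\det^{-1}$ must recover \eqref{eq:trace_r_ell}, which forces the determinant at unramified $p$ via $\det(r_\ell(\Frob_p))=1$ together with the known Frobenius data. Finally, $r_\ell\cong\Sym^2\widetilde r_\ell\otimes\det^{-1}\widetilde r_\ell$ holds because $\mathrm{ad}^0$ of a lift of $r'_\ell$ recovers $r'_\ell$ viewed in $\SO(3)$ under the fixed identification $\PGL_2\simeq\SO(3)$.
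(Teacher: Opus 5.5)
Your architecture matches the paper's: take a lift, twist it by a global character assembled from local inertial corrections via $\Gal(\QQbar/\QQ)^{\mathrm{ab}}\cong\prod_p\ZZ_p^\times$, and pin down the determinant by conductor and parity. The paper starts from Patrikis's lift, which is already de Rham at $\ell$ and unramified almost everywhere, and gets the crystalline local lift from Conrad; that difference is immaterial.

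\textbf{The gap.} It is in the last step, which you leave open, and the fix you propose cannot work. The Frobenius traces in \eqref{eq:trace_r_ell} do not change under $\widetilde{r}_\ell\mapsto\widetilde{r}_\ell\otimes\lambda$, while the determinant gets multiplied by $\lambda^2$. So those traces only see $\det\widetilde{r}_\ell$ modulo squares of characters, which is exactly the ambiguity you need to remove. Likewise, $\det r_\ell=1$ carries no information about $\det\widetilde{r}_\ell$.

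The missing argument is elementary. Write $\ZZ_2^\times=\{\pm1\}\times(1+4\ZZ_2)$ with $1+4\ZZ_2\cong\ZZ_2$.
\begin{enumerate}
\item Since $\ell$ is odd, the restriction of $\det\widetilde{r}_\ell$ to the $\ZZ_2$-factor has finite $2$-power order. Hence it is the square of a character $c_2$ of $\ZZ_2^\times$ that is trivial on $\{\pm1\}$.
\item Twist by $c_2^{-1}$. This twist is unramified away from $2$, so nothing changes at $3$, $5$ or $\ell$.
\item After the twist, $\eta=\det\widetilde{r}_\ell\cdot\chi_{\cyc,\ell}^2$ has conductor $15$ or $60$. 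That is, $\eta$ is $\bigl(\frac{-15}{\bullet}\bigr)$ or $\bigl(\frac{60}{\bullet}\bigr)$.
\item Twisting by squares can never change $\eta$ on $\{\pm1\}\subset\ZZ_2^\times$; that last bit is settled by parity instead. You correctly derived $\det\widetilde{r}_\ell(\iota)=-1$ from \eqref{eq:r'odd}, and this rules out the even character $\bigl(\frac{60}{\bullet}\bigr)$.
\end{enumerate}
This is how the paper concludes.

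\textbf{Two smaller corrections.}

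First, your class field theory claim is wrong. Since $\Gal(\QQbar/\QQ)^{\mathrm{ab}}\cong\widehat{\ZZ}^\times=\prod_p\ZZ_p^\times$, any family of characters of the factors $\ZZ_p^\times$, almost all trivial, multiplies to a global character with no compatibility condition. The relation $\prod_v\omega_v(-1)=1$ does not constrain the finite components. It only computes the value at complex conjugation, which is the element $-1\in\widehat{\ZZ}^\times$, from them. That is precisely what makes the parity step work: $\eta(\iota)=\eta_2(-1)\eta_3(-1)\eta_5(-1)=-\eta_2(-1)$, so oddness forces $\eta_2$ to be trivial on $\{\pm1\}$.

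Second, ``continuous, hence unramified almost everywhere'' is not a valid implication for $\ell$-adic representations; Ramakrishna constructed examples ramified at infinitely many primes. The conclusion does hold here, for a different reason. Any two lifts of $r'_\ell$ differ by a character, and continuous characters $\widehat{\ZZ}^\times\to\QQbar_\ell^\times$ are unramified almost everywhere. Moreover, some lift (for example Patrikis's) is unramified almost everywhere.
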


\begin{proof}
By \cite[Prop.5.5]{Pat15},
there exists a lifting
$\Phi\colon \Gal(\QQbar/\QQ) \to \GL_2(\QQbar_\ell)$ of $r'_\ell$,
which is unramified almost everywhere
and is de Rham at $\ell$.

For $p\neq 2, 3, 5, \ell$,
we have $\Phi|_{I_p} = c_p\mathrm{Id}$
for some character $c_p$ of $I_p$
since $r'_\ell|_{I_p}$ is trivial.
Note that $c_p$ are trivial for almost all $p$ since $\Phi$ is unramified almost everywhere.

For $p=3,5$,
the representation $\Phi|_{\Gal(\QQbar_p/\QQ_p)}$
is isomorphic to $d_p\oplus d_p^{\prime}$
by Lemma \ref{ControlFrob}.
The restrictions of $d_p$ and $d_p^{\prime}$ at the wild inertia group are the same
since $r_\ell'$ is tamely ramified.
We denote $c_p$ the restriction of $d_p$ to the inertia group.

Since $r_\ell'$ is crystalline
and the lifting $\Phi$ is de Rham at $\ell$,
by \cite[Cor.6.7]{Conrad},
there exists a crystalline lifting $\phi$ of
$r'_\ell|_{\Gal(\QQbar_\ell/\QQ_\ell)}$.
We may assume that $\phi$ has Hodge-Tate weight $(2,0)$
after a suitable Tate twist.
Then $\Phi|_{\Gal(\QQbar_\ell/\QQ_\ell)} = \phi\cdot d_\ell$
for a continuous character
$d_\ell\colon \Gal(\QQbar_\ell/\QQ_\ell) \to \QQbar_\ell^\times$.
Take $c_\ell = d_\ell|_{I_\ell}$.

At $p=2$,
identify $I_2$ with $\{\pm 1\}\times\ZZ_2$
where $\{\pm 1\} = \Gal(\QQ_2(\sqrt{-1})/\QQ_2)$.
The restriction $\Phi|_{I_2}$ is quasi-unipotent,
and hence $\det\Phi$ factors through a finite quotient
$\{\pm 1\}\times \ZZ/2^a$.
Pick an $\alpha \in \QQbar_\ell^\times$
such that $\det\Phi|_{I_2}(1,1) = \alpha^2$.
Let $c_2: I_2 \to \QQbar_\ell^\times$
given by $c_2|_{\{\pm 1\}} = 1$
and $c_2(1,1) = \alpha$
so that $c_2^{-2}\det\Phi|_{I_2}$
factors through $\{\pm 1\}$.

Let
$c\colon \Gal(\QQbar/\QQ)^{\mathrm{ab}}= \prod_p \ZZ_p^{\times}\to\QQbar_\ell^{\times}$ be the product of all $c_p$
via the identification $I_p = \ZZ_p^\times$
(permuting $p^m$-th roots of unity for $m\geq 1$).
This $c$ is well-defined, as $c_p$ is trivial for almost every $p$.
Then $\widetilde{r}_\ell=\Phi\otimes c^{-1}$ is a lifting
satisfying the first three listed conditions
with the twist of the determinant
$(\det\widetilde{r}_\ell)\otimes \chi_{\cyc,\ell}^2$ equal to
either $\big(\frac{-15}{\bullet}\big)$ or $\big(\frac{60}{\bullet}\big)$
since it has conductor 15 or 60.
Since $\widetilde{r}_\ell$ is odd by \eqref{eq:r'odd},
one has
$(\det\widetilde{r}_\ell)\otimes \chi_{\cyc,\ell}^2= \big(\frac{-15}{\bullet}\big)$.
\end{proof}

Consider the reduction mod $\ell$
\[ \overline{\widetilde{r}}_\ell\colon
\Gal(\QQbar/\QQ) \longrightarrow \GL_2(\FFbar_\ell) \]
of $\widetilde{r}_\ell$.
It is well-defined up to semisimplification.

\begin{lem}\label{lem:abs_irred}
The reduction $\overline{\widetilde{r}}_\ell$ is irreducible
for $\ell\gg 1$.
\end{lem}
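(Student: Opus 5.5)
We argue by contradiction, following the strategy of \cite[\S 4.7]{Yun}. Suppose that for infinitely many $\ell$ the semisimplified reduction $\overline{\widetilde{r}}_\ell$ is reducible, say $\overline{\widetilde{r}}_\ell^{\mathrm{ss}} \cong \bar\chi_1\oplus\bar\chi_2$ with characters $\bar\chi_i\colon \Gal(\QQbar/\QQ)\to\FFbar_\ell^\times$. The first step is to pin down these characters. By Proposition \ref{lift}, $\widetilde{r}_\ell$ is unramified outside $\{2,3,5,\ell\}$, and at $3,5$ the inertia acts through a tame quotient of order $2$ (eigenvalues $1,-1$). At $2$ the conductor exponent is bounded independently of $\ell$ (by the bound $c\le 8$ quoted from \cite{Serre-GQ}), so each $\bar\chi_i$ restricted to inertia at $2$ has bounded order.

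At $\ell$, $\widetilde{r}_\ell$ is crystalline with Hodge--Tate weights $(2,0)$. For $\ell>3$ we have $2<\ell-1$, so Fontaine--Laffaille theory applies, and the inertia characters of $\bar\chi_i|_{I_\ell}$ are $\omega^{a_i}$ with $\{a_1,a_2\}=\{0,2\}$ (up to the sign convention). Hence, up to ordering, $\bar\chi_1=\epsilon_1$ and $\bar\chi_2=\epsilon_2\,\bar\chi_{\cyc,\ell}^{\pm2}$, where $\epsilon_1,\epsilon_2$ are characters of conductor dividing $2^8\cdot 15$ and of bounded order. There are therefore finitely many possibilities for the pair $(\epsilon_1,\epsilon_2)$, independently of $\ell$.

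Next we compare traces at a fixed auxiliary prime, say $p=7$. For all $\ell$ in the infinite set,
\[ \tr\widetilde{r}_\ell(\Frob_7)\equiv \epsilon_1(7)+\epsilon_2(7)\,7^{\pm2} \pmod{\lambda}. \]
The left side is determined up to sign by the fixed algebraic number $\tr\widetilde{r}_\ell(\Frob_7)^2/\det\widetilde{r}_\ell(\Frob_7)$. Indeed, $r_\ell=\Sym^2\widetilde{r}_\ell\otimes\det^{-1}$ has $\ell$-independent trace, namely
\[ \Bigl(\frac{15}{7}\Bigr)\frac{-\tm_5(7)}{7^3}=\frac{(\tr\widetilde r)^2}{\det\widetilde r}-1, \]
and $\det\widetilde{r}_\ell(\Frob_7)=\bigl(\frac{-15}{7}\bigr)7^{2}$ up to the normalization of the cyclotomic twist. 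So $\bigl(\epsilon_1(7)+\epsilon_2(7)7^{\pm2}\bigr)^2$ is congruent, modulo infinitely many primes, to an $\ell$-independent algebraic number. Since there are only finitely many choices for the characters, some fixed candidate gives an exact equality in $\QQbar$.

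The final step is to rule out that equality. Purity says the Frobenius eigenvalues of $\widetilde{r}_\ell$ at $7$ both have absolute value $7$ (weight $2$), while the reducible candidate has eigenvalues of absolute value $1$ and $49$. An exact equality of the symmetric functions would force the eigenvalues to match, which contradicts purity. Concretely, one can check from $Z_5(7;T)$ that $\tm_5(7)$ is incompatible with $\bigl(\frac{15}{7}\bigr)\bigl(-1-(u\,7^{2}+u^{-1}7^{-2})\bigr)\cdot 7^{3}$ for any root of unity $u$. This weight comparison is what makes the argument work.

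I expect the main obstacle to be the local analysis at $\ell$ (and, more mildly, at $2$): we must justify that the reduction of a crystalline representation with Hodge--Tate weights $(2,0)$ has inertia characters $\omega^0,\omega^2$ on its semisimplification when it is reducible. This uses Fontaine--Laffaille and needs $\ell>3$. We must also justify that the characters at $2$ have bounded order uniformly in $\ell$. If the bound at $2$ is awkward, a cleaner route is to pass to a fixed finite extension trivializing the ramification at $2,3,5$ and run the same trace argument there.
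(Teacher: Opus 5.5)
Your proposal is correct and follows essentially the same route as the paper: assume reducibility for infinitely many $\ell$, identify the constituents as reductions of bounded-conductor Dirichlet characters times the power of the cyclotomic character forced by the Hodge--Tate weights at $\ell$, pigeonhole to an exact identity over a fixed cyclotomic field, and contradict purity (the paper's $|\tm_5(p)|\le 3p^3$ versus $p^5-p^3-p$, which is your concrete check at $p=7$). The only real difference is that the paper tracks just the ratio $\psi_1\psi_2^{-1}$, as a constituent of $\overline{r}_\ell$ whose conductor is bounded by that of $r_\ell$; your key quantity $(\tr\widetilde{r}_\ell)^2/\det\widetilde{r}_\ell$ depends only on this ratio, so you can do the same and drop the uniform bound at $2$ on the individual characters you flagged (those characters in any case depend on the twist chosen in Proposition \ref{lift}).
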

\begin{proof}
(Cf.\,\cite[(4.8.9)]{Serre-GQ},
\cite[Thm.1.2.1]{Kisin},
\cite[Lemma 4.7.2]{Yun}
and their proofs.)
Suppose, on the contrary, that there are infinitely many primes $\ell$
such that $\overline{\widetilde{r}}_\ell$
is not irreducible.
So, for these $\ell$,
there are characters
$\psi_1,\psi_2\colon \Gal(\QQbar/\QQ) \to \FFbar_\ell^\times$
such that
$\overline{\widetilde{r}}_\ell \simeq \psi_1\oplus\psi_2$
up to semisimplification.
Then,
for the reduction $\overline{r}_\ell$ of $r_\ell$ in \eqref{eq:repre_r},
we have
$\overline{r}_\ell \simeq \psi_1\psi_2^{-1}\oplus 1\oplus \psi_1^{-1}\psi_2$.
Let $N$ be the conductor of $r_\ell$.
Since $r_\ell$ is de Rham at $\ell$,
there is a Dirichlet character
$\chi_0\colon \Gal(\QQbar/\QQ) \to \QQbar^\times\subset\QQbar_\ell^\times$
of conductor dividing $N$
such that $\psi_1\psi_2^{-1} = \overline{\chi}$
where $\overline{\chi}$ is the reduction of $\chi = \chi_{\cyc,\ell}^{-2}\chi_0$
(after exchanging $\psi_1,\psi_2$ if needed).
In particular,
by the evaluation of
$\rho_{5,\ell} = r_\ell\chi_{\cyc,\ell}^{-3}\big(\frac{15}{\bullet}\big)$
at $\Frob_p$
for $p>5, p\neq\ell$,
one has
\begin{equation}\label{eq:m5_cong}
-\Bigl(\frac{15}{p}\Bigr)\tm_5(p)
\equiv p^3\big(p^2\chi_0(p) + 1 + p^{-2}\chi_0^{-1}(p)\big) \bmod\ell.
\end{equation}
Since the congruence holds for arbitrarily large $\ell$
and the involved characters $\chi_0$ have bounded conductors,
there exists a fixed such $\chi_0$
making \eqref{eq:m5_cong}
an equality in the ring of integers of a fixed cyclotomic field
(adding values of $\chi_0$ to $\QQ$).

On the other hand,
the integer $\tm_5(p)$
is in the range $[-3p^3,3p^3]$
since $\rho_{5,\ell}$ is of rank 3 and weight 6.
However,
one has
\[ |p^5\chi_0(p) + p^3 + p\chi_0^{-1}(p)| \geq p^5-p^3-p, \]
and the right-hand side is greater than $3p^3$ for $p\geq 5$,
yielding a contradiction.
\end{proof}

\begin{lem}\label{lemma:tm5}
For $p\equiv 5 \bmod 6$, we have
	$$
\frac{-\tm_5(p)}p  +\Bigl(\frac{15}{p}\Bigr) p^2 
 \quad	\in  5\mathbb Z.
	$$
	Furthermore, up to $p=2357$ (the $350$th prime), the values  $\tm_5(p)$ are of the form
	\begin{align*}
		\Bigl(\frac{-5}{p}\Bigr) \frac{-\tm_5(p)}p+\Bigl(\frac{-3}{p}\Bigr) p^2 =
		\begin{cases}
			m_p^2, & \mbox{ if } p \equiv 1 \bmod 6,\\
			-5n_p^2, & \mbox{ if } p \equiv 5 \bmod 6,
		\end{cases}
	\end{align*}
	where $m_p$ and $n_p \in \ZZ$.  
	
\end{lem}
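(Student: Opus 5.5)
The plan is to prove the congruence by an elementary computation with the Kloosterman sums modulo $5$. The key input is that $\tm_3(p)=0$ for $p\equiv 5\bmod 6$. The second assertion is a finite verification.

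\emph{Step 1: express $\tm_5$ through power moments.} Fix $p\geq 7$ (so $p\neq 5$) and write $s_a=\alpha_a+\beta_a=-\Kl_2(p;a)$ with $\alpha_a\beta_a=p$. Expanding the symmetric powers in terms of $s_a$ gives
\[ \Kl_2^{\Sym^1}=s_a,\quad \Kl_2^{\Sym^3}=s_a^3-2ps_a,\quad \Kl_2^{\Sym^5}=s_a^5-4ps_a^3+3p^2s_a. \]
Summing against $\qt(a)$ and setting $S_j=\sum_a\qt(a)s_a^j$, we get $S_1=\tm_1(p)$ and $S_3=\tm_3(p)+2p\,\tm_1(p)$. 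Hence
\[ \tm_5(p)=S_5-4p\big(\tm_3(p)+2p\,\tm_1(p)\big)+3p^2\tm_1(p). \]

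\emph{Step 2: compute $S_5$ modulo $5$.} Work in $\ZZ[\zeta_p]$ modulo the ideal $(5)$, where $x\mapsto x^5$ is additive. Then
\[ \Kl_2(p;a)^5\equiv\sum_{x}\psi\big(5(x+ax^{-1})\big)=\Kl_2(p;25a) \pmod 5, \]
using the substitution $x\mapsto 5x$. Since $\qt(25)=1$, reindexing gives
\[ S_5=-\sum_a\qt(a)\Kl_2(p;a)^5\equiv-\sum_a\qt(a)\Kl_2(p;a)=S_1=\tm_1(p) \pmod 5. \]
Both sides are rational integers, so the congruence holds in $\ZZ$.

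\emph{Step 3: use $\tm_3(p)=0$.} Theorem \ref{thm:EC}(1), proved above via the CM form $f_{\text{48.3.e.a}}$ with CM by $\QQ(\sqrt{-3})$, gives $\tm_3(p)=-a_f(p)p$. For $p\equiv 5\bmod 6$ the prime $p$ is inert in $\QQ(\sqrt{-3})$, so $a_f(p)=0$ and $\tm_3(p)=0$. Combining with Step 1,
\[ \tm_5(p)\equiv\tm_1(p)(1-8p^2+3p^2)\equiv\tm_1(p)=-\Bigl(\frac{-1}{p}\Bigr)p \pmod 5. \]
Since $p$ is invertible mod $5$ and $\tm_5(p)/p\in\ZZ$, it follows that $-\tm_5(p)/p\equiv\bigl(\frac{-1}{p}\bigr)\pmod 5$. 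The integrality $\tm_5(p)/p\in\ZZ$ should follow from Theorem \ref{thm:GR} together with the $p$-adic Newton-above-Hodge statement, since all Frobenius eigenvalues are divisible by $p$; alternatively it can be checked directly.

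\emph{Step 4: the Legendre symbols.} By Euler's criterion and quadratic reciprocity, $\bigl(\frac{5}{p}\bigr)=\bigl(\frac{p}{5}\bigr)\equiv p^2\pmod 5$, so $\bigl(\frac{15}{p}\bigr)p^2\equiv\bigl(\frac{3}{p}\bigr)p^4\equiv\bigl(\frac{3}{p}\bigr)$. For $p\equiv 5\bmod 6$ we have $\bigl(\frac{-3}{p}\bigr)=-1$, hence $\bigl(\frac{3}{p}\bigr)=-\bigl(\frac{-1}{p}\bigr)$. Adding this to Step 3, the expression $-\tm_5(p)/p+\bigl(\frac{15}{p}\bigr)p^2$ is $\equiv 0\pmod 5$, which is the first claim.

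The second assertion is a finite computation. Compute $\tm_5(p)$ for all primes up to $2357$, either directly from the Kloosterman sums or from $Z_5(p;T)$. Then check that the displayed combination is a perfect square when $p\equiv 1\bmod 6$, and $-5$ times a perfect square when $p\equiv 5\bmod 6$.

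The step I expect to need the most care is Step 3, specifically the vanishing $\tm_3(p)=0$. I will take it from Theorem \ref{thm:EC}(1) and the CM property. If one prefers to avoid the modularity input, I would instead show it directly with the involution $a\mapsto -a$ or the cubic-residue structure, but I would try the CM route first.
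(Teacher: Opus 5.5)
Your proposal is correct and takes the same approach as the paper. You expand $\Kl_2^{\Sym^5}$ in powers of $\Kl_2$ and kill the $\tm_3(p)$ term using the CM of $f_{\text{48.3.e.a}}$. You then reduce $\sum_a\qt(a)\Kl_2(p;a)^5\equiv\sum_a\qt(a)\Kl_2(p;a)\pmod 5$, justifying this more explicitly than the paper via Frobenius on $\ZZ[\zeta_p]/(5)$ and $a\mapsto 25a$, and you also address the integrality of $\tm_5(p)/p$, which the paper leaves implicit. You finish with Euler's criterion, and the second assertion is a finite computation, as in the paper.

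The one loose end is $p=5$. It satisfies $p\equiv 5\bmod 6$ but is excluded by your argument, and tacitly by the paper's. It is settled directly: $\tm_5(5)=Z_5'(5;0)=150$ from $Z_5(5;T)=1+150T+15625T^2$, so $-\tm_5(5)/5=-30\in 5\ZZ$.
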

\begin{proof}
Immediately from the definition, one has
$$
  \Kl_2^{\Sym^5}(a;p) 
  =  \Kl_2^{\Sym^1}(p;a)^5 -4p  \Kl_2^{\Sym^3}(a;p) -5\Kl_2^{\Sym^1}(p;a)p^2.
  $$
By the direct evaluation \eqref{eq:small_moments} of $\tm_1(p)$,
the modularity of $\tm_3(p)$ in \eqref{eq:Evans3}
and that $f=f_{\text{48.3.e.a}}$ has CM by $\QQ(\sqrt{-3})$, we can write 
  \begin{align*}
  	\tm_5(p) =& -\sum_{a\in\FF_p^\times}\Big(\frac ap\Big)\Kl_2(p;a)^5+4p^2 \cdot a_f(p) +5\Big(\frac{-1}p\Big)p^3\\
  	          =& -\sum_{a\in\FF_p^\times}\Big(\frac ap\Big)\Kl_2(p;a)^5 +5\Big(\frac{-1}p\Big)p^3,
  \end{align*}
  for  $p\equiv 5 \bmod 6$.   	Observe that 
	   $$
	  \frac{-\tm_5(p)}p \equiv \frac 1p  \sum_{a\in\FF_p^\times}\Big(\frac ap\Big)\Kl_2(p;a)^5 \equiv \frac 1p \sum_{a\in\FF_p^\times}\Big(\frac ap\Big)\Kl_2(p;a)  = \Bigl(\frac{-1}{p}\Bigr) \equiv   \Bigl(\frac{-5}{p}\Bigr) p^2\bmod 5,
	  $$
	  since  $p^2 \equiv  \bigl(\frac{5}{p}\bigr) \bmod 5$. 

The second assertion can be derived from common software.
\end{proof}

We are now ready to prove the modularity of $\tm_5$.
For a given prime $\ell >5$,
let
$r_\ell=\rho_{5,\ell}\otimes \chi_{\cyc,\ell}^3\otimes\bigl(\frac{15}{\bullet}\bigr)$.
Then
$r_\ell = \Sym^2\widetilde{r}_\ell \otimes \det^{-1}\widetilde{r}_\ell$
for a degree-$2$ representation $\widetilde{r}_\ell$
as in Proposition \ref{lift}.

Let $\bar{\widetilde{r}}_\ell$ be the mod $\ell$ representation of $\widetilde{r}_\ell$, well-defined up to semisimplification.
We may assume that $\bar{\widetilde{r}}_\ell$ is absolutely irreducible
by choosing $\ell\gg 0$, by Lemma \ref{lem:abs_irred}.  
The Artin conductor of $\widetilde{r}_\ell$ is $2^c\cdot3\cdot5$,
for some $c \in \ZZ_{\geq 0}$,
by Proposition \ref{lift}. By Proposition \ref{prop:HodgeNumbers},
we deduce that the Serre weight of  $\widetilde{r}_\ell$ is $3$.
The assertion of Serre's modularity conjecture
tells us that there exists a newform $f$ of level $2^c\cdot15$
for some $0\leq c\leq 8$,
weight $3$, character $\epsilon_f$ with $q$-expansion $\sum a_f(n)q^n$ and associated Galois representation $\rho_f$ such that the reduction mod $\ell$ representation of $\rho_f$ is isomorphic to $\bar{\widetilde{r}}_\ell$.

By Proposition \ref{lift}, the character $\epsilon_f$ is induced from $\(\frac{-15}{\bullet}\)$.
Therefore, the target Hecke eighenform $f$ satisfies that
\begin{gather*}
    a_f(p)^2-{\Big(\frac{-15}{p}\Big)}p^2
    = {\Big(\frac{-15}{p}\Big)} p^2\cdot \tr (r_\ell(\Frob_p))
    = {\Big(\frac{-1}{p}\Big)}  \frac{-\tm_5(p)}{p},
\end{gather*} 
for prime $p \geq 7$. 
Further by Lemma \ref{lemma:tm5} for the case of $p\equiv 5\bmod 6$, we must have
$$
    \(\frac{a_f(p)}{\sqrt{-5} n_p}\)^2={\Big(\frac{-15}{p}\Big)\Big(\frac{-3}p\Big)
    =\Big(\frac{5}{p}\Big)}, \quad \mbox{for some } n_p\in \ZZ.
$$
Also, for $p\equiv 1\bmod{6}$ with  $p\leq 2357$, we have 
	\[ 
	\(\frac{a_f(p)}{m_p}\)^2=\Big(\frac{-15}{p}\Big)\Big(\frac{-3}p\Big)
    =\Big(\frac{5}{p}\Big), \quad \mbox{for some } m_p\in \ZZ.
    \]
	So
	$a_f(p) = \pm m_p, \pm m_pi, \pm\sqrt{5}n_p, \pm\sqrt{-5}n_p$. Since  the Sturm bound for the  space of modular form of level $2^8\cdot 15$ with a given character is $2304$,  we conclude that the coefficient field $K_f:=\QQ(a_f(n) : n \geq  1)$ is $\QQ(\sqrt{5},i)$. 
	
	Taking $f$ to be any of the normalized newform in the space $S_3\big(60,  \big(\frac{-15}{\bullet}\big)\big)$ with $K_f = \QQ(\sqrt{5},i)$, one can check the validity of the identity numerically
	$$
	a_f(p)^2=\Bigl(\frac{-1}{p}\Bigr) \frac{-\tm_5(p)}p+\Bigl(\frac{-15}p\Bigr)p^2
	\quad
	\text{for primes $7\leq p \leq 2309$},
	$$
	and this confirms the claimed modularity for $\tm_5$.

\begin{rem}\label{rem:Evans5}
Suppose $f$ satisfies the equality \eqref{eq:Evans5}
in Theorem \ref{thm:EC}.
Let $g=f\otimes\psi$ be the twist of $f$ by a character $\psi$.
Then the coefficients $a_g(p) = \psi(p)a_f(p)$ satisfy
a similar equality,
like \eqref{eq:Evans5_2} below.
In this sense,
the modular form $f$ is the one with minimal level
that interprets $\tm_5(p)$ in terms of
(the symmetric squares of) modular forms.
We refer the computation of the level of a twist to \cite[Lemma 11.2.1]{Computing}.

In particular,
let $g$ be one of the forms
$f_{\text{300.3.g.e}}$ and $f_{\text{300.3.g.h}}$ in {\LMFDB},
which are the twists of $f_{\text{60.3.b.a}}$
by the characters
$\chi = \chi_{5.c}$ or $\chi_{15.e}$.
We have $\chi^2(n) = \big(\frac{5}{n}\big)$,
and hence one has
\begin{equation}\label{eq:Evans5_2}
\Big(\frac{-3}{p}\Big)a_g(p)^2
= p^2\Big[\Big(\frac{15}{p}\Big)\frac{-\tm_5(p)}{p^3} + 1\Big]
\end{equation}
for prime  $p>5$.
On the other hand,
we have
$\big(\frac{15}{p}\big)\frac{-\tm_5(p)}{p^3} = \tr(r_\ell(\Frob_p)) \in \QQ$
for $r_\ell(\Frob_p) \in \SO(V_{5,\ell})$
in \eqref{eq:trace_r_ell}.
Now choosing a basis of $(V_{5,\ell})_{\QQbar_\ell}$
and taking an isomorphism $\QQbar_\ell \cong \CC$
as abstract fields,
we regard $r_\ell(\Frob_p)$ as an element in $\SO(3,\CC)$.
Regarding traces, we may assume that $r_\ell(\Frob_p)$
is semisimple.
Then $r_\ell(\Frob_p)$ lies in a compact subgroup of $\SO(3,\CC)$
since the eigenvalues have absolute value one.
Therefore $r_\ell(\Frob_p)$ is conjugate to an element
in the compact form $\SO(3)$.
For $T\in\SO(3)$,
one has
$\tr(T) = 1+2\cos\theta \geq -1$
since every nontrivial rotation $T$ on $\RR^3$ is determined by
its axis and angle of rotation $\theta \in \RR/2\pi\ZZ$.
Consequently,
one can replace the term on the left-hand side of \eqref{eq:Evans5_2}
by $|a_g(p)|^2$,
which recovers Evans's original formulation
in \cite[p.524]{Evans-hyper}.
The positivity is consistent with the observation
that $a_n(g)\in \ZZ$ when $n\equiv 1 \bmod 6$ and $a_n(g)\in \ZZ \cdot \sqrt{-5}$ when $n\equiv 5 \bmod6$ in \LMFDB.
\end{rem}

\appendix
\section{Additional materials}
\subsection{Direct approaches}\label{sect:Lim}

Deligne \cite{Deligne} and Katz \cite{Katz88} had given cohomological interpretation of the Kloosterman sums. Let $p,\ell$ be different prime numbers and $\QQbar_\ell$ be an algebraic closure of the field of $\ell$-adic numbers $\QQ_\ell$.
Then the nontrivial additive character $\psi\colon \FF_p\to\CC^{\times}$ could be regard as character taking value in $\QQbar_\ell$ by choosing a primitive $p$-th root of unity in $\QQbar_\ell$. Denote the Artin-Schreier sheaf as $\AS$, which is a rank one $\ell$-adic lisse \'{e}tale sheaf on $\Ap$ with trace function $z\mapsto\psi(\tr_{\FF_q/\FF_p}(z))$. Now, consider the following diagram:

\begin{equation}\label{prod-sum}
    \vcenter{
    \xymatrix{
    &{\mathbb{G}_{m,\FF_p}^2}\ar[dl]_{\gamma} \ar[dr]^{\sigma}&\\
    {\mathbb{G}_{m,\FF_p}}&&
    {\AA_{\FF_p}^1}
    }
    }
\end{equation}

\noindent where $\sigma$ and $\gamma$ are sum and product of coordinates respectively, i.e., if the coordinates of $\Gptwo$ are $(z,x)$, then $\sigma(z,x)=z+x$ and $\gamma(z,x)=zx$. The Kloosterman sheaf $\Kl_2$ is defined as:
\begin{equation*}
    \Kl_2=\mathrm{R}^1\gamma_{!}\sigma^{*}\AS.
\end{equation*}

The Kloosterman sheaf is a lisse etale sheaf of rank two on $\Gp$ with trace function $a\mapsto -\Kl_2(q;a)$ if $a\in\mathbb{G}_m(\FF_q)$. Moreover, $\Kl_2$ is tame at $0$ and totally wild at $\infty$.

By changing variables of $\Gptwo$ by $(z,x)\mapsto(z/x,x)$, the diagram \eqref{prod-sum} could be regard as:
\begin{equation*}
    \vcenter{
    \xymatrix{
    &{\mathbb{G}_{m,\FF_p}^2}\ar[dl]_{\pi} \ar[dr]^{f}&\\
    {\mathbb{G}_{m,\FF_p}}&&
    {\AA_{\FF_p}^1}
    }
    }
\end{equation*}
where $\pi(z,x)=z, f(z,x)=x+z/x$ and $\Kl_2\simeq  \mathrm{R}^1\pi_{!}f^{*}\AS$.

Let $[2]\colon\Gp\to\Gp$, $t \mapsto t^2$ be the double cover, and $\Klt_2=[2]^{*}\Kl_2$. Furthermore, denote $\left(\frac{\cdot}{p}\right)$ as the sheaf of Legendre's symbol on $\Gp$.

First, we recall some vanishing results.

\begin{prop}
The following cohomology groups vanish.
\begin{enumerate}
    \item $\mathrm{H}^i_{\et,?}(\Gpb,\Sym^k\Kl_2)=0$ for all $?=\emptyset, c$ and $i\neq 1$.
    \item $\mathrm{H}^i_{\et,?}(\Gpb,(\frac{\cdot}{p})\otimes\mathrm{Sym^k Kl_2})=0$ for all $?=\emptyset, c$ and $i\neq 1$.
\end{enumerate}
\end{prop}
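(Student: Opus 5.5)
The plan is to reproduce the argument of Lemma \ref{lem:coh_vanish}, applying it directly to $\Sym^k\Kl_2$ and to its quadratic twist rather than to $\Klt_2$. Let $\sF$ denote either $\Sym^k\Kl_2$ or $\left(\frac{\cdot}{p}\right)\otimes\Sym^k\Kl_2$; both are lisse on $\Gp$.

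\emph{Non-compact support.} Since $\Gpb$ is an affine curve, Artin vanishing gives $\coH^i_{\et}(\Gpb,\sF)=0$ for $i\geq 2$. For $i=0$, $\coH^0_{\et}(\Gpb,\sF)$ is the space of invariants of the geometric fundamental group $\pi_1^{\et}(\Gpb)$ acting on the stalk. By \cite[Thm.11.1]{Katz88}, the geometric monodromy group of $\Kl_2$ is $\SL_2$, which is connected. Hence the Zariski closure $G$ of the image of $\pi_1^{\et}(\Gpb)$ acting on $\Kl_2\oplus\left(\frac{\cdot}{p}\right)$ is contained in $\SL_2\times\mu_2$ and surjects onto $\SL_2$.

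Next, $G^\circ$ also surjects onto $\SL_2$, since the image of $G^\circ$ is a closed subgroup of finite index in the connected group $\SL_2$. Moreover $G^\circ$ maps trivially to $\mu_2$ because $\mu_2$ is finite. Thus $G^\circ=\SL_2\times\{1\}$ acts on the stalk of $\sF$ through the irreducible representation $\Sym^k$ of the standard representation. For $k\geq 1$ this representation has no nonzero $\SL_2$-invariants, so already the $G^\circ$-invariants vanish, and a fortiori $\coH^0_{\et}(\Gpb,\sF)=0$. The twist is harmless here.

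A more elementary alternative for the twisted case is to use the inertia at $0$. By \cite[Lemma 1.2]{FW10}, $I_0$ acts on $\Sym^k\Kl_2$ unipotently with a single Jordan block, so its invariants are $\langle f_0\rangle$. On $\left(\frac{\cdot}{p}\right)$ the inertia $I_0$ acts by a nontrivial character of order $2$. Therefore the twisted sheaf has $I_0$-invariants equal to $0$, and in particular it has no global invariants.

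\emph{Compact support.} Here we use Poincaré duality \eqref{eq:cempty_duality}, noting that $\sF$ is self-dual up to a Tate twist by \eqref{eq:duality_sheaf}. This gives $\coH^i_{\et,\rc}(\Gpb,\sF)\cong \coH^{2-i}_{\et}(\Gpb,\sF)^\vee(-k-1)$. Hence the vanishing for $i\neq 1$ with compact support follows from the non-compact case: $\coH^0_{\et,\rc}$ corresponds to $\coH^2_{\et}=0$, $\coH^2_{\et,\rc}$ corresponds to $\coH^0_{\et}=0$, and all other degrees vanish for dimension reasons.

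The only point requiring care is the monodromy statement in the twisted case, namely the passage from $G$ to $G^\circ$; the inertia-at-$0$ argument is offered as a backup route. Everything else is formal.
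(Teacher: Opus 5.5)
Your proof is correct. It is the argument the paper uses in the main text for $\sVt_k$ in Lemma \ref{lem:coh_vanish}: Artin vanishing, connected geometric monodromy $\SL_2$, then the duality \eqref{eq:cempty_duality}. You run it directly on $\Sym^k\Kl_2$ and its quadratic twist. Since $\coH^i_{\et,?}(\Gpb,\sVt_k)$ splits as in \eqref{eq:cohV_relation}, that lemma already contains both parts of the statement.

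The appendix proof of this particular proposition is different and much terser. It defers the untwisted case to \cite[Chapter 5]{Katz88}. It handles the twist by noting that the Legendre sheaf is tame at $0$ and $\infty$ and invoking the Grothendieck--Ogg--Shafarevich formula. That formula only controls Euler characteristics: it shows the twist leaves $\chi_c$ unchanged. So in the paper's version, the actual vanishing of $\coH^0_{\et}$ and $\coH^2_{\et,\rc}$ for the twisted sheaf rests on the citation. Your passage from $G$ to $G^\circ$ gives an explicit reason instead: the order-two twist is invisible to the identity component, which is still $\SL_2$ acting irreducibly through $\Sym^k$.

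Your backup argument via inertia at $0$ is also valid and is exactly Proposition \ref{dim0}. It only applies to part (2), though, because $\Sym^k\Kl_2$ itself has a one-dimensional $I_0$-invariant line. Part (1) therefore genuinely needs the monodromy input.
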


\begin{proof}
We refer the proof to \cite[Chapter 5]{Katz88}. The sheaf of Legendre symbol $\legen$ is tame at both $0$ and $\infty$; the result follows from the Grothendieck-Shafarevich-Ogg formula.
\end{proof}

Suppose $(z,x_i)$ are coordinates of $\Gpkone$.
Let $f_k=\sum_{i=1}^{k}(x_i+z/x_i)$ and $\tilde{f}_k=\sum_{i=1}^{k}(x_i+z^2/x_i)$. Denote by $\mathscr{L}_f$ the pullback of the Artin-Schreier sheaf $f^{*}\AS$. We have the commutative diagram:

\begin{equation}
    \vcenter{
    \xymatrix{
    &{\mathbb{G}_{m,\FF_p}^{k+1}}\ar[dl]_{\pi} \ar[dr]^{f_k}&\\
    {\mathbb{G}_{m,\FF_p}}&&
    {\AA_{\FF_p}^1}
    }
    }
\end{equation}

\begin{prop}
The sheaf $\Kl_2^{\otimes k}$ is isomorphic to $\mathrm{R}^k\pi_{!}\ASk$.
\end{prop}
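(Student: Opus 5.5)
We will prove $\Kl_2^{\otimes k}\simeq \mathrm{R}^k\pi_!\ASk$ by induction on $k$, using the Künneth formula for compactly supported cohomology relative to the base $\Gp$. The key observation is that $\ASk$ splits as an external tensor product over the base. Write $\mathbb{G}_{m,\FF_p}^{k+1}$ as the $k$-fold fibre product over $\Gmz$ of copies of $\Gmz\times\Gmx$, with the $i$-th copy carrying the coordinate $x_i$. Since $\AS$ is a character sheaf, we have $\AS(u+v)\simeq\AS(u)\otimes\AS(v)$. Hence
\[ \ASk \simeq \mathrm{pr}_1^*\AS(x_1+z/x_1)\otimes\cdots\otimes \mathrm{pr}_k^*\AS(x_k+z/x_k). \]
This is the external tensor product over $\Gmz$ of the sheaves $\AS(f)$ defining $\Kl_2$.

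Next we apply the relative Künneth formula for $\bR\pi_!$ (SGA4, XVII, 5.4.3). It gives
\[ \bR\pi_!\ASk \simeq \bR\pi_{1!}\AS(f)\otimes^{\mathbf{L}}\cdots\otimes^{\mathbf{L}}\bR\pi_{1!}\AS(f), \]
where $\pi_1\colon\Gmz\times\Gmx\to\Gmz$ is the projection.

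The key input is that $\bR\pi_{1!}\AS(f)$ is concentrated in degree $1$, with $\mathrm{R}^1\pi_{1!}\AS(f)=\Kl_2$ lisse.
\begin{itemize}
\item For $\mathrm{R}^2$: it vanishes because on each geometric fibre $\Gm$ the sheaf $\AS(x+a/x)$ is nontrivial. It is wildly ramified at $\infty$, so it has no coinvariants.
\item For $\mathrm{R}^0$: it vanishes because $\Gm$ is affine, so a nonzero lisse sheaf on it has no nonzero compactly supported global sections.
\end{itemize}
Both vanishing statements are checked fibrewise by proper base change. Then $\bR\pi_{1!}\AS(f)\simeq\Kl_2[-1]$ is a complex of flat lisse sheaves, so the derived tensor product is an ordinary one. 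It yields $\bR\pi_!\ASk\simeq \Kl_2^{\otimes k}[-k]$, and taking $\mathrm{R}^k$ gives the claim.

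Equivalently, one can argue inductively: factor $\pi$ through $\mathbb{G}_{m}^{k}$ by forgetting $x_k$. The projection formula and base change then show $\mathrm{R}\pi_!\ASk\simeq \mathrm{R}\pi'_!\mathscr{L}_{f_{k-1}}\otimes\Kl_2[-1]$.

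The main technical point is the degree concentration of $\bR\pi_{1!}\AS(f)$, which makes the Künneth spectral sequence degenerate. This is standard (Deligne, Katz \cite{Katz88}): the wildness at $\infty$ handles $\mathrm{R}^2$, and affineness handles $\mathrm{R}^0$.

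We also need to check that the Frobenius and sign conventions are compatible, so that the isomorphism matches trace functions: $(-\Kl_2(q;a))^k$ on the left, and $(-1)^k\sum\psi(\cdots)$ on the right, by the Lefschetz formula with only degree $k$ contributing. This is automatic from the construction, and gives a useful sanity check.
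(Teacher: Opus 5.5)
Your proof is correct and is essentially the paper's argument. The relative Künneth formula over $\Gm$ that you invoke is exactly the paper's combination of the external Künneth formula for $\pi_k\colon \Gm^{2k}\to\Gm^{k}$ with base change along the diagonal $\Delta\colon\Gm\to\Gm^k$ (using $\Kl_2^{\otimes k}\simeq\Delta^*\Kl_2^{\boxtimes k}$); you additionally spell out the concentration of $\bR\pi_{1!}\AS(f)$ in degree $1$, which makes the Künneth spectral sequence degenerate and which the paper leaves implicit.
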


\begin{proof}
Consider the following Cartesian diagram:
\begin{equation}
    \vcenter{
    \xymatrix{
    {\Gpkone}\ar[r]^{h}\ar[d]_{\pi}&
    {\mathbb{G}_{m,\FF_p}^{2k}}\ar[d]^{\pi_k}\ar[dr]^{f_{k}}&
    \\
    {\Gp}\ar[r]_{\Delta}&{\Gpk}&{\Ap}
    }
    }
\end{equation}
where 
\begin{align*}
   &h(z,x_i)=(z,x_1,\dots,z,x_k)&
   &\pi(z,x_i)=z\\ 
   &\pi_k(y_1,x_1,\dots,y_k,x_k)=(y_1,\dots,y_k)& &f_k(y_1,x_1,\dots,y_k,x_k)=\sum_{i=1}^{k}(x_i+y_i/x_i)
\end{align*} 
and $\Delta$ is the diagonal embedding.

Note that $\Kl_2^{\otimes k}\simeq\Delta^{*}(\Kl_2^{\boxtimes k})$. The proposition follows from applying the spectral sequence and the base change theorem.
\end{proof}

Let $\chi\colon\mathfrak{S}_k,\mathfrak{S}_k\times\mu_2\to\{\pm 1\}$ be the sign character of permutation group $\mathfrak{S}_k$ and $\chi'\colon\mathfrak{S}_k\times\mu_2\to\{\pm 1\}$ be the sign character of permutation group and nontrivial action on $\mu_2$. For a representation $V$ of the finite group $G=\mathfrak{S}_k$ or $\mathfrak{S}_k\times\mu_2$ over a field $K$ of characteristic zero, we denote $V^{G,\chi}$ or $V^{G,\chi'}$ as their isotypic parts, i.e., the image of 
\begin{gather*}
    \left(\frac{1}{|G|}\sum_{\sigma\in G}\chi(\sigma)\sigma \right) \quad \text{or} \quad \left(\frac{1}{|G|}\sum_{\sigma\in G}\chi'(\sigma)\sigma \right)
\end{gather*}
in the group ring $K[G]$ acting on $V$.

\begin{prop}
There are following isomorphisms.
\begin{enumerate}
    \item $\coH^1_{\et,\rc}(\Gpb,\Sym^k\Kl_2)\simeq \coH^{k+1}_{\et,\rc}(\Gpbkone,\ASk)^{\symgp_k,\chi}
    \simeq\coH^{k+1}_{\et,\rc}(\Gpbkone,\ASkt)^{\symgp_k\times\mu_2,\chi}$.
    \item $\mathrm{H}^1_{\et,c}(\Gpb,(\frac{\cdot}{p})\otimes\mathrm{Sym^k Kl_2})\simeq\mathrm{H}^{k+1}_{\et,c}(\Gpbkone,\ASkt)^{\mathfrak{S}_k\times\mu_2,\chi^{\prime}}$.
\end{enumerate}
\end{prop}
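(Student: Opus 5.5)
The plan is to start from the previous proposition, $\Kl_2^{\otimes k}\simeq \mathrm{R}^k\pi_!\ASk$, and push it to the symmetric power by taking isotypic parts. Then compute $\coH^1_{\et,\rc}$ of $\Gpb$ through the Leray spectral sequence for $\pi$.

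\textbf{Step 1: the sheaf level.} The group $\symgp_k$ acts on $\Gpkone$ by permuting the $x_i$. This action commutes with $\pi$ and preserves $f_k$, so it acts on $\mathrm{R}^k\pi_!\ASk$. Under the Künneth isomorphism $\Kl_2^{\otimes k}\simeq\Delta^*\Kl_2^{\boxtimes k}$, this action is the permutation action on tensor factors twisted by the Koszul sign, because each $\Kl_2$ sits in odd degree $1$. Hence the $\chi$-isotypic part (sign character) of $\mathrm{R}^k\pi_!\ASk$ is $\Sym^k\Kl_2$.

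\textbf{Step 2: fibre cohomology.} On each geometric fibre $\pi^{-1}(z)\cong\mathbb{G}_m^k$, the cohomology $\coH^j_\rc(\ASk)$ is $\bigotimes\coH^{j_i}_\rc(\mathbb{G}_m,\AS(x+z/x))$. Each factor is concentrated in degree $1$, so $\mathrm{R}^j\pi_!\ASk=0$ for $j\neq k$. The Leray spectral sequence then degenerates, and since $\symgp_k$ acts compatibly,
\[ \coH^{k+1}_{\et,\rc}(\Gpbkone,\ASk)^{\symgp_k,\chi}\simeq\coH^1_{\et,\rc}(\Gpb,\Sym^k\Kl_2). \]
Taking isotypic parts commutes with cohomology because we are in characteristic zero and the projector is exact.

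\textbf{Step 3: passing to $\ASkt$.} The map $(t,x_i)\mapsto(t^2,x_i)$ pulls $f_k$ back to $\tilde f_k$. Write $\tilde\pi$ for the projection to $t$. By base change, $\mathrm{R}^k\tilde\pi_!\ASkt=[2]^*\mathrm{R}^k\pi_!\ASk$, whose $\chi$-part is $\Sym^k\Klt_2=\sVt_k$. Hence
\[ \coH^{k+1}_{\et,\rc}(\ASkt)^{\symgp_k,\chi}\simeq\coH^1_{\et,\rc}(\Gpb,\sVt_k)=\coH^1_{\et,\rc}(\Gpb,\sV^+_k)\oplus\coH^1_{\et,\rc}(\Gpb,\sV_k), \]
using $[2]_*\sVt_k=\sV_k^+\oplus\sV_k$. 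We let $\mu_2$ act by $t\mapsto -t$, which fixes $\tilde f_k$; this is the deck group of $[2]$. Its invariants on $\coH^1_\rc(\Gpb,\sVt_k)=\coH^1_\rc(\Gpb,[2]_*\sVt_k)$ give the $\sV_k^+$ summand, and its anti-invariants give the $\qt\otimes\sV_k^+$ summand. Taking the $\symgp_k\times\mu_2$-isotypic parts for $\chi$ and for $\chi'$ therefore yields (1) and (2).

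\textbf{Main obstacle.} The key is to pin down the sign conventions in the isotypic projectors. In Step 1, one has to check that the Künneth action of a transposition introduces exactly the sign making the $\chi$-part $\Sym^k$ rather than $\wedge^k$. In Step 3, one has to check that the $\mu_2$ action on $[2]_*$ splits with the trivial character corresponding to the untwisted summand. I would verify both on the rank-one local pieces: for $k=2$, compare the traces of the projector with the traces of $\Sym^2$. For $\mu_2$, note that $[2]_*\QQbar_\ell=\QQbar_\ell\oplus\qt$, where the deck involution acts by $+1$ on the first summand and $-1$ on the second.
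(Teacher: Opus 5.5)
Your proposal is correct and follows essentially the paper's route: the paper defers (1) to the Künneth and sign-isotypic argument of \cite[Section 2]{FSY}, which your Steps 1--2 carry out, and it deduces (2) from the splitting given by $[2]_*\QQbar_\ell=\QQbar_\ell\oplus\qt$, as in your Step 3. Your version is slightly more careful, because you apply the $\symgp_k$-sign projection before splitting off the $\mu_2$-eigenspaces; the paper's displayed decomposition of $\coH^{k+1}_{\et,c}(\Gpbkone,\ASkt)$ tacitly requires this projection, since without it the right-hand side involves $\Kl_2^{\otimes k}$ rather than $\Sym^k\Kl_2$.
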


\begin{proof}
We refer the proof to \cite[Section 2]{FSY}. For the second statement, we note that 
\begin{gather*}
    \coH^{k+1}_{\et,c}(\Gpbkone,\ASkt) \simeq \coH^1_{\et,c}(\Gpb,\Sym^k\Kl_2) \oplus \coH^1_{\et,c}(\Gpb,(\frac{\cdot}{p}) \otimes \Sym^k\Kl_2).
\end{gather*}
Then the second statement follows from the first one.
\end{proof}

Lei Fu and Daqing Wan had studied deeply the L-functions for symmetric products of the Kloosterman sums in \cite{FW05,FW08,FW10}. We follow the ideas of Fu and Wan in \cite{FW05} to do the computations for twisted moments of the Kloosterman sums, especially the local monodromy at $\infty$.

Recall that the Kloosterman sheaf is a lisse etale sheaf of rank two on $\Gp$ with trace function $a\mapsto -\Kl_2(q;a)$ if $a\in\mathbb{G}_m(\FF_q)$. Moreover, $\Kl_2$ is tame at $0$ and totally wild at $\infty$. Lisse etale sheaves on $\Gp$ correspond to the Galois representations of the function field $\FF_p (t)$ of $\Gp$. By abuse of notations, we still denote the Galois representations corresponding to $\Kl_2,\Sym^k\Kl_2,(\frac{\cdot}{p})\otimes\mathrm{Sym^k Kl_2}$ by $\Kl_2,\Sym^k\Kl_2,(\frac{\cdot}{p})\otimes\mathrm{Sym^k Kl_2}$. Besides, we denote the decomposition group, inertia group and wild inertia group at prime $\frak{p}$ by $D_{\frak{p}},I_{\frak{p}},P_{\frak{p}}$, respectively.    

First, we summarise the results of Fu-Wan in \cite{FW05}:
\begin{itemize}
    \item Dimension of the $I_0$ invariant part of $\Sym^k\Kl_2$ is $1$.
    \item Let $F_0$ be the geometric Frobenius at $0$. Then $\det(1-F_{0}T\mid(\Sym^k\Kl_2)^{I_0})=1-T$.
    \item Dimension of the $I_{\infty}$ invariant part of $\Sym^k\Kl_2$ is given as follow:
    \begin{equation*}
        \dim (\Sym^k\Kl_2)^{I_{\infty}}=
        \begin{cases}
            0& \text{if $k$ is odd},\\
            1+[\frac{k}{2p}]& \text{if $k\equiv 0 \bmod 4$},\\
            [\frac{k}{2p}]& \text{if $k\equiv 2 \bmod 4$}.
        \end{cases}
    \end{equation*}
    \item Let $F_{\infty}$ be the geometric Frobenius at $\infty$. Then
    \begin{equation*}
        \det(1-F_{\infty}T\mid(\Sym^k\Kl_2)^{I_{\infty}})=
        \begin{cases}
            1& \text{if $2\nmid k$}\\
            (1-p^{\frac{k}{2}}T)^{m_k}& \text{if $2|k$ and $p\equiv 1 \bmod 4$}\\
            (1+p^{\frac{k}{2}}T)^{n_k}(1-p^{\frac{k}{2}}T)^{m_k-n_k}& \text{if $2|k$ and $p\equiv -1 \bmod 4$}
        \end{cases}
    \end{equation*}
    where
    \begin{equation*}
        m_k=
        \begin{cases}
           1+[\frac{k}{2p}]& \text{if $k\equiv 0 \bmod 4$}\\
            [\frac{k}{2p}]& \text{if $k\equiv 2 \bmod 4$} 
        \end{cases}
    \end{equation*}
    and $n_k=[\frac{k}{4p}+\frac{1}{2}]$.
\end{itemize}

Now, we start to prove the analogous results for twisted moments of the Kloosterman sums.

\begin{prop}\label{dim0}
For $I_0$ invariant part, we have $\mathrm{dim} ((\frac{\cdot}{p})\otimes\mathrm{Sym^k Kl_2})^{I_0}=0$.
\end{prop}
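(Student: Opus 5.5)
We use the explicit description of the local monodromy of $\Sym^k\Kl_2$ at $0$ from Lemma~\ref{lem:sVt_0}, or more precisely its source \cite[Lemma 1.2]{FW10}. There is a basis $\{f_i\}_{i=0}^k$ of $\Sym^k\Kl_2|_{\eta_0}$ such that every $\sigma\in I_0$ acts by $\exp(t_\ell(\sigma)\rN)$, where $\rN$ is the nilpotent operator with $\rN f_i=f_{i-1}$. Thus $I_0$ acts through its tame quotient, and this action is unipotent with a single Jordan block of size $k+1$.

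The Legendre symbol sheaf $\legen$ is the Kummer sheaf $\qt$ attached to the double cover $[2]$. At $0$ it is tamely ramified, and its inertia character is $\sigma\mapsto\sigma(\sqrt z)/\sqrt z\in\{\pm1\}$, which is nontrivial. Choose $\sigma_0\in I_0$ with $\sigma_0(\sqrt z)=-\sqrt z$. Then $\sigma_0$ acts on $\legen\otimes\Sym^k\Kl_2$ by $-\exp(t_\ell(\sigma_0)\rN)$.

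This operator is $-1$ times a unipotent operator, so all its eigenvalues equal $-1$. Any vector fixed by $I_0$ would be fixed by $\sigma_0$, giving an eigenvalue $1$. Since $-1\neq 1$ in $\QQbar_\ell$ (here $\ell$ is odd, or in any case the coefficient field has characteristic $0$), no nonzero vector is fixed. Therefore
\[ \Bigl(\bigl(\tfrac{\cdot}{p}\bigr)\otimes\Sym^k\Kl_2\Bigr)^{I_0}=0. \]

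The only point needing care is the existence of such a $\sigma_0$, that is, that $\qt$ is genuinely ramified at $0$. This holds because $z\mapsto t^2$ is ramified at $0$ when $p$ is odd, so $I_0$ surjects onto $\Gal$ of the double cover. A cleaner formulation is to note that $\sigma\mapsto \qt(\sigma)$ is nontrivial on $I_0$ while $\Sym^k\Kl_2|_{I_0}$ is unipotent. Any invariant vector would span a line on which $I_0$ acts trivially; this line would be a subrepresentation of $\Sym^k\Kl_2$ on which $I_0$ acts by $\qt^{-1}$, which is impossible because every subquotient of a unipotent representation is trivial.
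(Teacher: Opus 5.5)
Your proof is correct and is essentially the paper's argument. The paper takes a topological generator $\gamma$ of the tame quotient of $I_0$, which acts unipotently on $\Sym^k\Kl_2$ (single Jordan block) and by $-1$ on the Legendre-symbol sheaf, and compares coefficients in $-\gamma w = w$ one by one; this triangular comparison is just your observation that $-\gamma$ has only the eigenvalue $-1$. (Your remark that $\ell$ is odd is not assumed in the paper and is not needed, since $\QQbar_\ell$ has characteristic $0$.)
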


\begin{proof}
By \cite{Deligne,Katz88},
the monodromy of $\Kl_2$ at $0$ is unipotent with a single Jordan block.
Let $\gamma$ be a topological generator of the tame quotient of $I_0$.
Then there exists a basis $\{v_1,v_2\}$ of $\Kl_2$
such that $\gamma v_1=v_1$ and $\gamma v_2=v_1+v_2$.
Let $w_i=v_1^{k-i}v_2^{i}\in \Sym^k\Kl_2$.
Then $\{w_i\}_{i=0}^k$ forms a basis of $\Sym^k\Kl_2$.

Suppose
$w=\sum_{i=0}^k a_i w_i \in ((\frac{\cdot}{p})\otimes\Sym^k\Kl_2)^{I_0}$.
Recall that $(\frac{\cdot}{p})$ is tamely ramified at $0$ with
$\gamma|_{(\frac{\cdot}{p})}=-1$.
Therefore, we have
\begin{gather*}
    -a_0 v_1^k -a_1 v_1^{k-1}(v_1+v_2) -\dots-a_k(v_1+v_2)^k=a_0 v_1^k + a_1 v_1^{k-1}v_2 +\dots+a_k v_2^k .
\end{gather*}
By comparing the coefficients of
$v_2^k,v_1v_2^{k-1},\cdots, v_1^k$ successively,
we deduce that all $a_i$ vanish.
\end{proof}

\begin{cor}
Let $F_0$ be the geometric Frobenius at $0$. Then
\begin{gather*}
    \mathrm{det}(1-F_{0}T|((\frac{\cdot}{p})\otimes\mathrm{Sym^k Kl_2})^{I_0})=1.
\end{gather*}
\end{cor}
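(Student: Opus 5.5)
The corollary follows directly from Proposition \ref{dim0}, so the plan is short. By that proposition the inertia invariants $((\frac{\cdot}{p})\otimes\Sym^k\Kl_2)^{I_0}$ form the zero vector space. The geometric Frobenius $F_0$ (more precisely, a lift of it in the decomposition group $D_0$) normalizes $I_0$, so it acts on the space of $I_0$-invariants. That action is well defined independently of the chosen lift, because two lifts differ by an element of $I_0$, which acts trivially on the invariants.

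For any endomorphism of the zero vector space, the characteristic polynomial $\det(1-F_0T\mid 0)$ is the determinant of an empty matrix. By the standard convention this equals $1$. Hence
\[
\det\big(1-F_0T\mid((\tfrac{\cdot}{p})\otimes\Sym^k\Kl_2)^{I_0}\big)=1 .
\]

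There is essentially no obstacle here. The only substantive input is the vanishing of the invariants, which is exactly Proposition \ref{dim0}. That vanishing rests on two facts: the tame generator $\gamma$ acts on $\Sym^k\Kl_2$ unipotently, and it acts on the Legendre sheaf by $-1$. Consequently $\gamma$ acts on the twisted sheaf with the single eigenvalue $-1$, and so it fixes no nonzero vector.

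As a consistency check, this agrees with Theorem \ref{thm:zeta_at_p}(ii). There, for $\sV_k=\qt\otimes\sV_k^+$, only $\sV_k^{I_\infty}$ appears as the kernel of $\coH^1_{\et,\rc}\to\coH^1_{\et,\rmid}$, with no contribution from $0$, unlike the untwisted case of Proposition \ref{prop:FW}.
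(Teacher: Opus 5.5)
Your proposal is correct and matches the paper's proof: the corollary follows from Proposition \ref{dim0}, and the characteristic polynomial of Frobenius on the zero space is the empty determinant $1$. Your remarks on the well-definedness of the $F_0$-action and on consistency with Theorem \ref{thm:zeta_at_p}(ii) are accurate but not needed for the argument.
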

\begin{proof}
This follows straightforwardly from Proposition \ref{dim0}.
\end{proof}

Next, we investigate the local behaviour of the representations at $\infty$. We will use the same notations as Fu and Wan, and we state those terminologies first:

Let $p$ be an odd prime. Define 
\begin{gather*}
    S_k(2,p)=\{(j_0,j_1)\in \ZZ_{\geq 0}^2\;|\;j_0-j_1\equiv 0 \bmod p \;\text{and}\;  j_0+j_1=k\}.
\end{gather*}
Let $\sigma$ be the switching operator acting on $S_k(2,p)$ by $\sigma(j_0,j_1)=(j_1,j_0)$.

Suppose $V$ is a vector space with basis $\{e_0,e_1\}$. Denote 
\begin{gather*}
    v_j=e_0^j e_1^{k-j}+(-1)^{k-j} e_0^{k-j} e_1^j 
\end{gather*}as elements in $\mathrm{Sym}^k V$.

When $k$ is even, let $b_k(2,p)$ be the number of $\sigma$-orbits in $S_k(2,p)$ such that the subspace spanned by that orbit is not zero. More explicitly, 
\begin{equation*}
    b_k(2,p)=
    \begin{cases}
       1+[\frac{k}{2p}]& \text{if $k\equiv 0 \bmod 4$},\\
       [\frac{k}{2p}]& \text{if $k\equiv 2 \bmod 4$}.
    \end{cases}
\end{equation*}

\begin{prop}\label{diminf}
Dimension of the $I_{\infty}$ invariant part of $(\frac{\cdot}{p})\otimes\mathrm{Sym^k Kl_2}$ is given by the following formula:
\begin{equation*}
    \mathrm{dim}((\frac{\cdot}{p})\otimes\mathrm{Sym^k Kl_2})^{I_{\infty}}=
    \begin{cases}
       0& \text{if $k$ is odd},\\
       [\frac{k}{2p}]& \text{if $k\equiv 0 \bmod 4$},\\
       1+[\frac{k}{2p}]& \text{if $k\equiv 2 \bmod 4$}.
    \end{cases}
\end{equation*}
\end{prop}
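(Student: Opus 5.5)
The plan is to compute the invariants of $(\frac{\cdot}{p})\otimes\Sym^k\Kl_2$ under $I_\infty$ directly from Fu--Wan's description of the local monodromy of $\Kl_2$ at $\infty$, in the same way \cite{FW05} obtained $b_k(2,p)$. Recall the structure: after pulling back along the double cover $[2]$ (which is tame of degree $2$ at $\infty$, since $p$ is odd), $\Klt_2|_{\eta_\inftyt}$ is a sum of two characters. Concretely, there is a basis $\{e_0,e_1\}$ of $\Kl_2|_{\eta_\infty}$ such that the wild inertia $P_\infty$ (inside the index-two subgroup $I_{\inftyt}$) acts on $e_0,e_1$ through the Artin--Schreier characters $\AS(2t)$ and $\AS(-2t)$. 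A tame element $\tau\in I_\infty\setminus I_\inftyt$ swaps the two lines, say $\tau e_0=e_1$ and $\tau e_1=-e_0$, up to the normalization fixed in \cite{FW05}. On $e_0^{j_0}e_1^{j_1}$, $P_\infty$ then acts by $\AS(2(j_0-j_1)t)$. This is trivial exactly when $p\mid j_0-j_1$, which recovers the set $S_k(2,p)$.

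First I determine the $I_\inftyt$-invariants of $\Sym^k\Kl_2$. Lemma \ref{lem:sVt_infty} already gives $\sVt_k^{I_\inftyt}=\cF_r\oplus\bigoplus_{i\in\Xi_k(p)}\cF_i^{\oplus2}$ for $k=2r$ and $0$ for $k$ odd. It is spanned by the monomials $e_0^{j_0}e_1^{j_1}$ with $(j_0,j_1)\in S_k(2,p)$. Since $(\frac{\cdot}{p})$ is trivial on $I_\inftyt$, the twisted sheaf has the same $I_\inftyt$-invariants. For $k$ odd this gives dimension $0$ at once. For $k$ even, the space has dimension $1+2[\frac{k}{2p}]$: the orbit $(r,r)$ contributes one monomial, and each of the $[\frac{k}{2p}]=\#\Xi_k(p)$ two-element $\sigma$-orbits contributes two.

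Next I take invariants under the quotient $I_\infty/I_\inftyt\cong\ZZ/2$, generated by $\tau$. The twist $(\frac{\cdot}{p})$ makes $\tau$ act by an extra factor $-1$. On each $\sigma$-orbit $\{(j_0,j_1),(j_1,j_0)\}$ with $j_0\neq j_1$, $\tau$ swaps the two monomials up to sign, so the $2$-dimensional span splits into one $+1$ and one $-1$ eigenline. Hence each such orbit contributes exactly $1$ to the invariants, both with and without the twist. On the fixed orbit $(r,r)$, $\tau(e_0e_1)^r=(-1)^r(e_0e_1)^r$. Untwisted, this line is invariant iff $r$ is even, i.e.\ $k\equiv0\bmod4$, which matches $b_k(2,p)$. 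Twisted, it is invariant iff $r$ is odd, i.e.\ $k\equiv2\bmod4$. This gives $[\frac{k}{2p}]$ for $k\equiv0\bmod4$ and $1+[\frac{k}{2p}]$ for $k\equiv2\bmod4$, as claimed.

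As a shortcut and a consistency check, the total $I_\infty$-invariant dimension of $[2]_*\sVt_k=\Sym^k\Kl_2\oplus(\frac{\cdot}{p})\otimes\Sym^k\Kl_2$ must equal $\dim\sVt_k^{I_\inftyt}=1+2[\frac{k}{2p}]$. Subtracting Fu--Wan's value $b_k(2,p)$ gives the statement immediately, which agrees with Theorem \ref{thm:zeta_at_p}(ii)(a).

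The only delicate point is the sign in $\tau(e_0e_1)^r=(-1)^r(e_0e_1)^r$, that is, $\tau e_0\wedge\tau e_1=-e_0\wedge e_1$ on the relevant line. I would not rederive this sign. Instead I would justify it by comparing with Fu--Wan's untwisted count: the known result $b_k(2,p)$ forces this sign.
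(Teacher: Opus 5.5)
Your argument is correct and is essentially the paper's own proof. The paper uses the same Fu--Wan basis and imposes invariance under $g$ (your $\tau$, with the extra $-1$ from the Legendre twist) and under the diagonal $g_{(a,1)}$, which gives $p\mid j_0-j_1$; it then observes that the middle vector $\tilde v_{k/2}$ vanishes iff $4\mid k$. Your subtraction shortcut via $[2]_*\sVt_k$ is the route \S2 takes to Theorem~\ref{thm:zeta_at_p}(ii)(a) from Lemma~\ref{lem:sVt_infty}.

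Two corrections to your ``delicate point.'' First, with $\tau e_0=e_1$, $\tau e_1=-e_0$ one has $\tau e_0\wedge\tau e_1=+e_0\wedge e_1$, as it must be since $\det\Kl_2$ is unramified; the sign you actually need is $\tau(e_0e_1)=-\det(\tau)\,e_0e_1=-e_0e_1$. Second, the vanishing for odd $k$ comes from $\tau^2=-1$, a tame quadratic character of $I_{\inftyt}$ that your description of $\Klt_2|_{\eta_\inftyt}$ leaves out (you list only the wild Artin--Schreier characters); your odd-$k$ conclusion still stands because you take it from Lemma~\ref{lem:sVt_infty}.
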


\begin{proof}
We use the same notations as Fu-Wan in \cite[Lemma 2.1]{FW05}.
There exists a basis $\{e_0,e_1\}$ of $\Kl_2$
such that $g e_0=e_1$ and $g e_1=-e_0$. Besides, we also have $g_{(a,\mu)}(e_i)=\psi_2((-1)^{i+1}a)\chi(\mu^{-1})e_i$ where $(a,\mu)\in \FF_p \times \mu_2$.

Suppose $v=\sum_{j=0}^k a_j e_0^j e_1^{k-j}$ is invariant under the actions of $I_{\infty}$.
Then we have equation $gv=v$, that is:
\begin{gather*}
    -(\sum_{j=0}^k (-1)^{k-j}a_j e_1^j e_0^{k-j})=\sum_{j=0}^k a_j e_0^j e_1^{k-j}.
\end{gather*}
(The minus sign on the left hand side is due to the twist of the Legendre symbol.)
By comparing the coefficients of both sides, we deduce that
\begin{equation*}
    \begin{cases}
       a_j=(-1)^{j+1} a_{k-j},\\
       a_{k-j}=(-1)^{k-j+1} a_j.
    \end{cases}
\end{equation*}
Therefore, we have $a_j=(-1)^{k+2}a_j$. If $k$ is an odd number, then $a_j=0$ for all $j$ and hence $v=0$. If $k$ is an even number, then $v$ is generated by $\tilde{v}_j=e_0^j e_1^{k-j}+(-1)^{k-j+1} e_0^{k-j} e_1^j$ with $0\leq j\leq \frac{k}{2}$.

Moreover, we also have equations $g_{(a,1)}(v)=v$ for all $a\in \mathbb{F}_p$, which are:
\begin{gather*}
    \sum_{j=0}^k a_j \psi_2(-ja+(k-j)a)e_0^j e_1^{k-j}=\sum_{j=0}^k a_j e_0^j e_1^{k-j}.
\end{gather*}
Therefore, we deduce that $j-(k-j)\equiv 0 \bmod p$. Hence, we conclude that 
\begin{equation*}
\mathrm{dim}((\frac{\cdot}{p})\otimes\mathrm{Sym^k Kl_2})^{I_{\infty}}=
\begin{cases}
   0& \text{if $k$ is odd},\\
   b_k(2,p)& \text{if $k$ is even}.
\end{cases}    
\end{equation*}

Finally, we observe that $\tilde{v}_{\frac{k}{2}}$ is zero if and only if $4|k$. This completes the proof.
\end{proof}

\begin{prop}
Let $F_{\infty}$ be the geometric Frobenius at $\infty$.
Then
    \begin{equation*}
        \det(1-F_{\infty}T\mid((\frac{\cdot}{p})\otimes\Sym^k\Kl_2)^{I_{\infty}})=
        \begin{cases}
            1& \text{if $2\nmid k$}\\
            (1-p^{\frac{k}{2}}T)^{m_k}& \text{if $2|k$ and $p\equiv 1\pod 4$}\\
            (1+p^{\frac{k}{2}}T)^{n_k}(1-p^{\frac{k}{2}}T)^{m_k-n_k}& \text{if $2|k$ and $p\equiv -1 \pod 4$}
        \end{cases}
    \end{equation*}
    where
    \begin{equation*}
        m_k=
        \begin{cases}
           [\frac{k}{2p}]& \text{if $k\equiv 0 \bmod 4$}\\
        1+[\frac{k}{2p}]& \text{if $k\equiv 2\bmod 4$} 
        \end{cases}
    \end{equation*}
    and $n_k=[\frac{k}{4p}+\frac{1}{2}]$.
\end{prop}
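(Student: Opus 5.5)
We compute the Frobenius action on the invariant subspace described in the proof of Proposition \ref{diminf}. For $k$ odd that subspace is zero, so the characteristic polynomial is $1$ and there is nothing more to do.

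For $k$ even, Proposition \ref{diminf} gives an explicit basis of $((\frac{\cdot}{p})\otimes\Sym^k\Kl_2)^{I_\infty}$. It consists of the vectors $\tilde v_j=e_0^je_1^{k-j}-(-1)^{k-j}e_0^{k-j}e_1^j$, where $0\le j\le k/2$, $2j-k\equiv 0\bmod p$, and $\tilde v_j\neq 0$. Here $\tilde v_{k/2}$ survives exactly when $k\equiv 2\bmod 4$.

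We use the description of $\Kl_2|_{\eta_\infty}$ in \cite[Lemma 2.1]{FW05}. There, Frobenius acts on the basis $\{e_0,e_1\}$ (after the twist by $\theta_0$) via Gauss sums. A lift $\widetilde F_\infty$ of the geometric Frobenius acts diagonally on $e_0,e_1$ with eigenvalues $g\,\epsilon_0$ and $g\,\epsilon_1$ for explicit signs $\epsilon_i$, up to the quadratic character. Equivalently, $[2]^*$ of this description is Lemma \ref{sVt_infty}, which gives $F_p$ on $\cF_i=\cF(\theta_0^k\theta_1^i)$.

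The cleanest route is to avoid recomputing and instead compare with the paper's body. Lemma \ref{lem:sVt_infty} gives $\det(1-F_pT\mid\sVt_k^{I_\inftyt})$. By \eqref{eq:sheafV_relation}, $[2]_*\sVt_k=\sV^+_k\oplus\sV_k$. Taking $I_\infty$-invariants of $[2]_*\sVt_k$ recovers $\sVt_k^{I_\inftyt}$ with the same Frobenius action, because $[2]$ is totally ramified at $\infty$ and the residue field does not change. So the determinant on $\sV_k^{I_\infty}$ is the quotient of \eqref{eq:ch_poly_inftyt} by the Fu--Wan factor for $(\Sym^k\Kl_2)^{I_\infty}$ recalled above. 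This is exactly Theorem \ref{thm:zeta_at_p}(ii)(a):
\[ (1-p^{k/2}T)^{\delta_{2+4\ZZ}(k)+n_k^+(p)}(1-(-1)^{(p-1)/2}p^{k/2}T)^{n_k^-(p)}. \]

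It then remains to match this with the stated form. Set $m_k=\dim$, which is $\flr{k/(2p)}+\delta_{2+4\ZZ}(k)$ by Proposition \ref{diminf}. We need to check $n_k^+(p)+n_k^-(p)=\flr{k/(2p)}$. This holds because $\Theta_k(p)$-type counting splits the $i\in\Xi_k(p)$ by $(k-2i)/p\bmod 4$, as in the proof of Lemma \ref{lem:sVt_infty}. When $p\equiv1\bmod4$ both factors coincide, giving $(1-p^{k/2}T)^{m_k}$. When $p\equiv3\bmod4$, the exponent of $(1+p^{k/2}T)$ is $n_k^-(p)=\flr{k/(4p)+1/2}=n_k$ and the rest is $m_k-n_k$.

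The main obstacle is justifying the local identification $([2]_*\sVt_k)^{I_\infty}\cong\sVt_k^{I_\inftyt}$ Frobenius-equivariantly. One also needs that Frobenius acts semisimply, so that the quotient of characteristic polynomials is legitimate. Semisimplicity follows from Lemma \ref{lem:sVt_infty}. For the identification, I would argue via Frobenius reciprocity for the finite-index inertia subgroup $I_\inftyt\subset I_\infty$: the $\mu_2$-quotient is part of inertia, not of the Frobenius. As a sanity check, the direct approach via the explicit $\tilde v_j$ should give the same answer. The computation is $F(\tilde v_j)=\pm g^k\cdot(\text{sign})\tilde v_j$ with $g^k=((-1)^{(p-1)/2}p)^{k/2}$, and the extra Legendre factor at $\infty$ flips the sign relative to \cite{FW05} only through the parity of $(k-2j)/p$.
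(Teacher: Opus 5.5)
Your proof is correct, but it takes a different route from the paper's. The paper handles this by pointing to \cite[Lemma 4.1]{FW05}. It reruns Fu--Wan's direct computation on $\Gmz$, evaluating a Frobenius lift on the explicit invariant vectors $\tilde v_j$ in the basis $\{e_0,e_1\}$ of $\Kl_2|_{\eta_\infty}$. The twist enters only through the extra sign in the action of the element $g$ exchanging $e_0,e_1$, as in Proposition \ref{diminf}.

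You instead go up to the double cover. First, $([2]_*\sVt_k)^{I_\infty}\cong\sVt_k^{I_{\inftyt}}$ Frobenius-equivariantly, by Shapiro's lemma, using $D_\infty=I_\infty D_{\inftyt}$, $I_\infty\cap D_{\inftyt}=I_{\inftyt}$ and a Frobenius lift chosen in $D_{\inftyt}$. This space splits as $(\sV^+_k)^{I_\infty}\oplus\sV_k^{I_\infty}$ by \eqref{eq:sheafV_relation}. Dividing \eqref{eq:ch_poly_inftyt} by the untwisted factor of Proposition \ref{prop:FW} gives Theorem \ref{thm:zeta_at_p}(ii)(a). That formula matches the stated form via $n_k^+(p)+n_k^-(p)=\flr{k/(2p)}$ (Hermite's identity).

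This is exactly how \S\ref{sect:twistedFW} obtains (ii)(a), and you supply the equivariant identification the paper leaves implicit. There is no circularity, since \S\ref{sect:twistedFW} does not use the appendix. Your route is shorter and avoids all Gauss-sum bookkeeping. However, it defeats the stated purpose of the appendix, which is to avoid passing to $\Klt_2$. It also consumes two inputs (the pulled-back \cite[Lemma 2.4]{FW10} and the untwisted formula), whereas the direct computation needs only the local structure of $\Kl_2$ at $\infty$ and produces explicit eigenvectors.

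Two small corrections. First, semisimplicity is not needed, since characteristic polynomials are multiplicative on direct sums. Second, your closing ``sanity check'' is wrong. Since $\qt|_{D_\infty}$ has kernel $D_{\inftyt}$, a Frobenius lift in $D_{\inftyt}$ acts identically in the twisted and untwisted cases. So on the $\Xi_k(p)$-part the twist does not change Frobenius eigenvalues at all. It only changes which of $v_j,\tilde v_j$ is $g$-invariant, and whether the middle vector $\tilde v_{k/2}$ survives. Moreover, $(k-2j)/p$ is always even here; the sign of the eigenvalue is governed by $(k-2j)/p \bmod 4$, not by its parity.
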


\begin{proof}
This is an analog of the result of Fu-Wan in \cite[Lemma 4.1]{FW05}.
The proof only requires a similar modification as that of Proposition \ref{diminf}.
\end{proof}

\begin{thm}
For any odd prime $p>k$,
\begin{equation*}
    \mathrm{dim}(\mathrm{H}^1_{\et,\rmid}(\Gpb,(\frac{\cdot}{p})\otimes\mathrm{Sym^k Kl_2}))=
    \begin{cases}
       \frac{k+1}{2}& \text{if $k$ is odd},\\
       \frac{k}{2}& \text{if $k\equiv 0 \bmod 4$},\\
       \frac{k}{2}-1& \text{if $k\equiv 2 \bmod 4$}.
    \end{cases}
\end{equation*}
\end{thm}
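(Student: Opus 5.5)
The plan is to compute the dimension of the middle cohomology as the Euler characteristic contribution minus the boundary invariants. We work directly with $\sV_k=(\frac{\cdot}{p})\otimes\Sym^k\Kl_2$. By the vanishing proposition above, $\coH^i_{\et,\rc}(\Gpb,\sV_k)=0$ for $i\neq1$. Hence $\dim\coH^1_{\et,\rc}(\Gpb,\sV_k)=-\chi(\PP^1,j_!\sV_k)$.

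The first step is to evaluate this Euler characteristic by the Grothendieck--Ogg--Shafarevich formula. Since $\Gm$ has Euler characteristic zero and the twist is tame at $0$ and $\infty$, we get $-\chi=\sw_0(\sV_k)+\sw_\infty(\sV_k)=\sw_\infty(\Sym^k\Kl_2)$. Twisting by the tame quadratic character does not change Swan conductors, since it does not alter the breaks, and at $0$ everything is tame.

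From the local decomposition of $\Sym^k\Kl_2$ at $\infty$ recalled in the proof of Lemma \ref{lem:sVt_infty}, each summand $[2]_*\cK_i^{(+)}\otimes\cF$ has Swan conductor $1$ when $p\nmid(k-2i)$. For $p>k$ the number $k-2i$ with $0\le i\le\flr{(k-1)/2}$ is never divisible by $p$, so $\Xi_k(p)=\emptyset$. This gives $\sw_\infty=\flr{(k+1)/2}$, and therefore $\dim\coH^1_{\et,\rc}=\frac{k+1}{2}$ for $k$ odd and $\frac k2$ for $k$ even. This agrees with the counts before Theorem \ref{thm:zeta_at_p} with $\flr{k/2p}=\flr{k/2p+1/2}=0$.

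The second step is to take $\bR\Gamma$ of $0\to j_!\sV_k\to j_*\sV_k\to\sV_k^{I_0}\oplus\sV_k^{I_\infty}\to0$. Using \eqref{eq:mid_j*} and the vanishing of $\coH^0$ and $\coH^2$, this yields
\[ \dim\coH^1_{\et,\rmid}=\dim\coH^1_{\et,\rc}-\dim\sV_k^{I_0}-\dim\sV_k^{I_\infty}. \]
Here we need that $\coH^0(\PP^1,j_*\sV_k)=\coH^0(\Gm,\sV_k)=0$, which holds by the $\SL_2$ monodromy argument of Lemma \ref{lem:coh_vanish}. Since $\coH^2(\PP^1,j_*\sV_k)$ is dual to that $\coH^0$, it also vanishes.

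The third step inserts the local invariants. By Proposition \ref{dim0}, $\dim\sV_k^{I_0}=0$. By Proposition \ref{diminf} with $\flr{k/2p}=0$ (as $p>k$), $\dim\sV_k^{I_\infty}$ equals $0$ for $k$ odd, $0$ for $k\equiv0\bmod4$, and $1$ for $k\equiv2\bmod4$. Subtracting gives exactly $\frac{k+1}{2}$, $\frac k2$ and $\frac k2-1$.

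The only delicate point is the Swan conductor count at $\infty$ after the twist by $(\frac{\cdot}{p})$. I would justify it by noting that on each summand $[2]_*\cK_i$ the twist only modifies the tame part, so the break $1/2$ and the Swan conductor $1$ per rank-two summand persist. Alternatively, one can pull back to $\sVt_k$ and use the splitting \eqref{eq:cohV_relation} together with Proposition \ref{prop:FW}.
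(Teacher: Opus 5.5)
Your proposal is correct and follows the paper's proof: you subtract $\dim\sV_k^{I_0}=0$ and $\dim\sV_k^{I_\infty}$ (from Propositions \ref{dim0} and \ref{diminf}, with $\flr{k/2p}=0$) from $\dim\coH^1_{\et,\rc}$. The only addition is that you derive $\dim\coH^1_{\et,\rc}=\flr{(k+1)/2}$ directly, using Grothendieck--Ogg--Shafarevich and the invariance of Swan conductors under the tame quadratic twist, whereas the paper takes this value from its earlier computation in \S\ref{sect:twistedFW}.
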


\begin{proof}
For $p>k$, we may compute the dimension by
\begin{align*}
    \dim\coH^1_{\et,\rmid}(\Gpb,(\frac{\cdot}{p})\otimes\Sym^k\Kl_2)&=
    \dim\coH^1_{\et,\rc}(\Gpb,(\frac{\cdot}{p})\otimes\Sym^k\Kl_2) \\
    &\hspace{15pt}-\dim((\frac{\cdot}{p})\otimes\Sym^k\Kl_2)^{I_{0}}
    -\dim((\frac{\cdot}{p})\otimes\Sym^k\Kl_2)^{I_{\infty}}.
\end{align*}
Hence, we have
\begin{equation*}
    \dim(\coH^1_{\et,\rmid}(\Gpb,(\frac{\cdot}{p})\otimes\Sym^k\Kl_2))=
    \begin{cases}
       \frac{k+1}{2}-0-0& \text{if $k$ is odd},\\
       \frac{k}{2}-0-[\frac{k}{2p}]& \text{if $k\equiv 0 \bmod 4$},\\
       \frac{k}{2}-0-(1+[\frac{k}{2p}])& \text{if $k\equiv 2\bmod 4$}.
    \end{cases}
\end{equation*}
Note that $[\frac{k}{2p}]=0$ when $p>k$, and this completes the proof.
\end{proof}

\subsection{Kloosterman sums and hypergeometric character sums} \label{sect:HCS}

Over the past few decades, progress has been made in understanding the relationship between Kloosterman sums and hypergeometric functions over finite fields, see \cite{Katz88, Greene, Evans-hyper, Lin-Tu, KAS, Saikia, Otsubo} for example. The modularity of the Kloosterman representations can also be obtained from the modularity of the corresponding hypergeometric representations.  In this subsection, we summarize some of the identities related to Evans's conjectures. 

For simplicity, we fix  $\psi\colon\FF_q\to\CC^{\times}$ to be the canonical additive character of $\FF_q$ and extend the multiplicative characters on $\FF_q^\times$ to $\FF_q$ by setting $\chi(0)=0$. Denote the $g(A)$ and $J(A,B)$ the Gauss sum and Jacobi sum:
$$
 g(A):=\sum_{a\in \FF_q}A(a)\psi(a), \quad J(A,B)= \sum_{a\in \FF_q} A(a)B(1-a),
$$
where $A$, $B\in \widehat{\FF_q^\times}$. 

Built on the orthogonality relations and the fact that  
$$
  \psi(a)=\frac 1{q-1}\sum_{\chi \in \widehat{\FF_q^\times}}g(\ol \chi)\chi(a),  \quad a\in \FF_q^\times,
$$
we can write $\spm_k(q)$ and $\tm_k(q)$  in terms of Gauss and Jacobi sums.  
\begin{lem}\label{lemma: sym2}
	For $a\in \FF_q^\times$,
	$$
\Kl_2(q;a) = \frac 1{q-1}\sum_{\chi \in \widehat{\FF_q^\times}}g(\ol \chi)^2\chi(a), 
$$
$$
   \Kl_2^{\Sym^2}(a;q) =\Kl_2(q;a)^2-q = \frac 1{q-1}\frac1{g(\phi)}\sum_{\chi \in \widehat{\FF_q^\times}}g(\ol \chi)^3g(\phi\chi)\chi(-4a),
$$
where $\phi$ is the quadratic character of $\widehat{\FF_q^\times}$. 
\end{lem}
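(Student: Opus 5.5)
The first formula comes from expanding the additive character by multiplicative Fourier analysis and using orthogonality; the second comes from the Hasse--Davenport product formula. For the first identity, insert
\[ \psi(x)=\frac{1}{q-1}\sum_{\chi}g(\ol\chi)\chi(x) \quad\text{and}\quad \psi(ax^{-1})=\frac{1}{q-1}\sum_{\lambda}g(\ol\lambda)\lambda(a)\ol\lambda(x) \]
into $\Kl_2(q;a)=\sum_{x\in\FF_q^\times}\psi(x)\psi(ax^{-1})$. Summing over $x$ gives $\sum_x\chi\ol\lambda(x)=(q-1)\delta_{\chi=\lambda}$. This collapses the double sum to $\frac{1}{q-1}\sum_\chi g(\ol\chi)^2\chi(a)$.

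For the second identity, the first equality $\Kl_2^{\Sym^2}(q;a)=\Kl_2(q;a)^2-q$ is immediate: $\alpha^2+\alpha\beta+\beta^2=(\alpha+\beta)^2-\alpha\beta$ with $\alpha\beta=q$. For the Gauss sum expression, square the first formula and regroup by the product character. This gives the convolution
\[ \Kl_2(q;a)^2=\frac{1}{q-1}\sum_{\chi}\Big(\frac{1}{q-1}\sum_{\lambda}g(\ol\lambda)^2g(\ol{\chi\ol\lambda})^2\Big)\chi(a). \]
It is cleaner to go the other way. Compute the Fourier coefficient $c(\chi)=\sum_{a}\ol\chi(a)\big(\Kl_2(q;a)^2-q\big)$ directly and compare it with the claimed $\frac{1}{g(\phi)}g(\ol\chi)^3g(\phi\chi)\ol\chi(-4)^{-1}$, suitably conjugated. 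Write $\Kl_2(q;a)^2=\sum_{x,y}\psi(x+y+a/x+a/y)$ and sum against $\ol\chi(a)$ over $a$. The inner sum over $a$ gives $\sum_a\ol\chi(a)\psi(a(1/x+1/y))$. When $x\neq -y$, this equals $\chi(1/x+1/y)\,g(\ol\chi)$. When $x=-y$, it is $(q-1)\delta_{\chi=1}$, and that term exactly cancels the $-q$ contribution up to the trivial-character bookkeeping, which I will check separately.

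For $\chi$ nontrivial, we are left with
\[ g(\ol\chi)\sum_{x,y,\,x+y\ne0}\psi(x+y)\,\chi\Big(\frac{x+y}{xy}\Big). \]
Substitute $s=x+y$ and $x=su$, $y=s(1-u)$. This splits into $g(\ol\chi)\sum_s\psi(s)\ol\chi(s)\cdot\sum_u\ol\chi(u(1-u))=g(\ol\chi)^2J(\ol\chi,\ol\chi)$. The key step is to rewrite the Jacobi sum $J(\ol\chi,\ol\chi)$ via the duplication formula (Hasse--Davenport for $m=2$): $g(\chi)g(\phi\chi)=\ol\chi(4)g(\chi^2)g(\phi)$. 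This gives $J(\ol\chi,\ol\chi)=g(\ol\chi)^2/g(\ol\chi^2)=\chi(4)\,g(\ol\chi)g(\phi\ol\chi)/g(\phi)$. Matching conventions (replacing $\chi$ by $\ol\chi$ in the Fourier inversion and using $g(\phi\ol\chi)$ versus $g(\phi\chi)$ together with $\chi(-1)$ factors) should produce $\chi(-4a)$ and the factor $g(\ol\chi)^3g(\phi\chi)/g(\phi)$.

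I expect the main obstacle to be the bookkeeping of signs and of the trivial character. The trivial-character term must be verified separately, since the degenerate cases $x+y=0$ and $\chi=1$ or $\chi=\phi$ interact with the $-q$ and with $g(1)=-1$. Here I would simply compute both sides at $\chi=1$ and at $\chi=\phi$ directly. The $\chi(-1)$ factors arising from $g(\chi)g(\ol\chi)=\chi(-1)q$ also need care.
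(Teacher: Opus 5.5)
Your first identity, the relation $\Kl_2^{\Sym^2}=\Kl_2^2-q$, and the reduction of $c(\chi)=\sum_a\ol\chi(a)\big(\Kl_2(q;a)^2-q\big)$ to $g(\ol\chi)^2J(\ol\chi,\ol\chi)$ for $\chi\neq\varepsilon$ are all correct. This is a reasonable way to supply a proof, since the paper states the lemma without one and only points to orthogonality and the expansion of $\psi$.

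The gap is in the key step. The duplication formula applied to $\ol\chi$ gives $g(\ol\chi^2)=\ol\chi(4)g(\ol\chi)g(\phi\ol\chi)/g(\phi)$, hence
\[ J(\ol\chi,\ol\chi)=\frac{g(\ol\chi)^2}{g(\ol\chi^2)}=\chi(4)\,\frac{g(\phi)\,g(\ol\chi)}{g(\phi\ol\chi)}, \]
so the ratio $g(\phi\ol\chi)/g(\phi)$ in your formula is inverted. For example, take $q=5$ and $\chi(2)=i$. Then $\phi\ol\chi=\chi$, so your expression equals $5/g(\phi)=\sqrt5$, whereas $J(\ol\chi,\ol\chi)=-1+2i$. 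With your formula you would get $c(\chi)=\chi(4)g(\ol\chi)^3g(\phi\ol\chi)/g(\phi)$, and the proposed reconciliation cannot close the gap. First, $g(\phi\ol\chi)$ is not $\pm g(\phi\chi)$. Second, there is no freedom to replace $\chi$ by $\ol\chi$. With your normalization of $c(\chi)$, the lemma asserts exactly $c(\chi)=\chi(-4)g(\ol\chi)^3g(\phi\chi)/g(\phi)$, with no conjugation needed, and such a substitution would also turn $g(\ol\chi)^3$ into $g(\chi)^3$.

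The missing ingredient is the reflection formula applied to both $\phi$ and $\phi\chi$. For $\chi\neq\varepsilon,\phi$,
\[ g(\phi)^2=\phi(-1)q=\chi(-1)\,g(\phi\chi)\,g(\phi\ol\chi), \quad\text{so}\quad \frac{g(\phi)}{g(\phi\ol\chi)}=\chi(-1)\,\frac{g(\phi\chi)}{g(\phi)}. \]
Hence $J(\ol\chi,\ol\chi)=\chi(-4)g(\ol\chi)g(\phi\chi)/g(\phi)$, and therefore $c(\chi)=\chi(-4)g(\ol\chi)^3g(\phi\chi)/g(\phi)$, as required.

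Your treatment of the two excluded characters also needs correcting.
\begin{itemize}
\item For $\chi=\varepsilon$, the $x=-y$ terms and the $-q$ do not cancel. Together they give $(q-1)^2-q(q-1)=-(q-1)$. The terms with $x+y\neq0$ give $g(\varepsilon)(q-2)\sum_{s\neq0}\psi(s)=q-2$. So $c(\varepsilon)=-1=g(\varepsilon)^3$, matching the lemma.
\item For $\chi=\phi$, the formula $J(A,B)=g(A)g(B)/g(AB)$ is unavailable. Instead use $J(\phi,\phi)=-\phi(-1)$, which gives $c(\phi)=-\phi(-1)g(\phi)^2=-q=\phi(-4)g(\phi)^2g(\varepsilon)$, again matching the lemma.
\end{itemize}
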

\begin{cor}
	For an odd prime power $q$, let $\phi \in \widehat{\FF_q^\times}$ be the quadratic character. Then
	\begin{align*}
		\tm_1(q) &=-g(\phi)^2=-\phi(-1)q, \qquad 		\tm_2(q) =-\phi(-1)g(\phi)^2=-q.
	\end{align*} 
\end{cor}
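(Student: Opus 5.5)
The plan is to substitute the Gauss-sum expansions of Lemma \ref{lemma: sym2} into the definition $\tm_k(q)=\sum_{a\in\FF_q^\times}\phi(a)\Kl_2^{\Sym^k}(q;a)$, where $\phi=\qt$. I will then swap the order of summation and use orthogonality of characters,
\[
\sum_{a\in\FF_q^\times}\phi(a)\chi(a)=
\begin{cases} q-1, & \chi=\phi,\\ 0, & \text{otherwise}, \end{cases}
\]
which holds because $\phi=\ol\phi$. This collapses each character sum to the single term $\chi=\phi$. The final step converts Gauss sums with the classical identity $g(\phi)^2=\phi(-1)q$; this is the analogue of \eqref{eq:gauss_square}, and the sign convention for $g$ is harmless here since only $g(\phi)^2$ appears.

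For $k=1$, recall the sign convention $\Kl_2^{\Sym^1}(q;a)=-\Kl_2(q;a)$. This gives
\[
\tm_1(q)=-\frac1{q-1}\sum_{\chi}g(\ol\chi)^2\sum_{a}\phi(a)\chi(a)=-g(\ol\phi)^2=-g(\phi)^2 .
\]
Then $g(\phi)^2=\phi(-1)q$ yields $\tm_1(q)=-\phi(-1)q$.

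For $k=2$, I use the second formula of Lemma \ref{lemma: sym2}. I factor $\chi(-4a)=\chi(-4)\chi(a)$ and sum over $a$ first, which again isolates $\chi=\phi$. This leaves
\[
\tm_2(q)=\frac{1}{g(\phi)}\,g(\phi)^3\,g(\phi^2)\,\phi(-4).
\]
Here $\phi^2$ is the trivial character, and with the convention $\chi(0)=0$ we get $g(\phi^2)=\sum_{a\neq0}\psi(a)=-1$. Also $\phi(-4)=\phi(-1)$, so $\tm_2(q)=-\phi(-1)g(\phi)^2$. Applying $g(\phi)^2=\phi(-1)q$ once more gives $-\phi(-1)^2q=-q$.

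No step is a real obstacle. The only points requiring care are three pieces of bookkeeping: the sign convention $\Kl_2^{\Sym^1}=-\Kl_2$, the value $g(\text{trivial})=-1$ under the stated convention $\chi(0)=0$, and the evaluation $\phi(4)=1$. As a consistency check, both results agree with \eqref{eq:small_moments}.
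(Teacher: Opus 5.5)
Your proposal is correct and follows essentially the route the paper intends: the corollary is stated without proof as an immediate consequence of Lemma~\ref{lemma: sym2}, obtained by inserting the two Gauss-sum expansions into $\tm_k(q)$, collapsing the sum to $\chi=\phi$ by orthogonality, and using $g(\varepsilon)=-1$ together with $g(\phi)^2=\phi(-1)q$ (Lemma~\ref{lem: Gauss-Job}). Your handling of the sign $\Kl_2^{\Sym^1}=-\Kl_2$ and of $\phi(-4)=\phi(-1)$ is also correct.
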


\medskip

Further, we define hypergeometric functions over finite fields as follows.  Let $A$ be a multiplicative character and  $t\in \FF_q^\times$,  we define 
$$
{}_{1}\mathbb{P}_{0}[A;t;q] := \ol{A}(1-t).
$$
When $n \geq 2$, for any $A_i$,  $B_j \in \widehat{\FF_q^\times}$, $i=0$, $\ldots$, $n-1$, $j=1$,  $\ldots$, $n-1$, we define 

\begin{align*}
\pPPq n{n-1}{A_0&A_1&\cdots& A_{n-1}}{&B_1&\cdots&B_{n-1}}{t;q}:=&\\
 \sum_{x\in \FF_q} A_{n-1}(x)\overline A_{n-1} B_{n-1}(1-x) \cdot
	&\pPPq{n-1}{n-2}{A_0& A_1&\cdots &A_{n-2}}{ & B_1&\cdots &B_{n-2}}{tx;q}\\
	= \frac{(-1)^{n}}{q-1} \left(\prod_{i=1}^{n-1}A_{i}B_{i}(-1) \right)& \sum_{\chi \in \widehat{\FF_q^\times}} \binom{A_{0}\chi}{\chi} \binom{A_{1}\chi}{B_{1}\chi} \cdots \binom{A_{n-1}\chi}{B_{n-1}\chi}\chi(t),
\end{align*} 
where 
$\binom{A}{B} := -B(-1)J(A, \overline{B})$,
for any characters $A,B$.   There are different versions of hypergeometric type character sums documented in literature.  
Here we use the version given in \cite{WIN3X}, for the purpose that these are closely related to the hypergeometric sheaves introduced by Katz \cite{Katz88, Katz96, LLT2}.  

	From the inductive formula 
	\begin{align}\label{eqn:KLsym^k} 
		  \Kl_2^{\Sym^n}(a;q)=  -\Kl_2(a;q) \Kl_2^{\Sym^{n-1}}(a;q)-q \Kl_2^{\Sym^{n-2}}(a;q)
	\end{align}
	and Lemma \ref{lemma: sym2}, one can induce the third and fourth moments as $\mathbb P$-values (see \cite{Lin-Tu, KAS, Saikia} for example), which we list the relation below. We will drop the $q$-notation (in our case $q=p$) in the $\mathbb P$-function.  
	
	\begin{lem}\label{lemma: reduction}
		For $a\in \FF_p^\times$,
		$$
		-\sum_{a\in \FF_p^\times}\phi(a)\Kl_2(a;p) \Kl_2^{\Sym^{2}(a;p)} =p\phi(-1)\pPPq32{\phi&\phi&\phi}{&\varepsilon&\varepsilon}{4}=p\pPPq32{\phi&\phi&\phi}{&\varepsilon&\varepsilon}{\frac14},
		$$
			$$
		\sum_{a\in \FF_p^\times}\phi(a)\(\Kl_2^{\Sym^{2}(a;p)}\)^2 =p\phi(-1)\pPPq43{\phi&\phi&\phi&\phi}{&\varepsilon&\varepsilon&\varepsilon}{1}.
		$$
		where $\phi$ is the quadratic character of $\widehat{\FF_p^\times}$. 
	\end{lem}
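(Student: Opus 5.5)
The plan is to prove both identities by Fourier expansion in multiplicative characters. We substitute the expansions of Lemma~\ref{lemma: sym2} for $\Kl_2(a;p)$ and $\Kl_2^{\Sym^2}(a;p)$, sum over $a\in\FF_p^\times$ against $\phi(a)$, and let orthogonality collapse the resulting double sum over characters to a single sum. The remaining step is to recognise each summand as a product of the normalised Jacobi sums $\binom{A}{B}=-B(-1)J(A,\ol B)$ that appear in the definition of ${}_n\mathbb P_{n-1}$.

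For the first identity, write
\[
\Kl_2(a;p)=\frac1{p-1}\sum_\chi g(\ol\chi)^2\chi(a),
\qquad
\Kl_2^{\Sym^2}(a;p)=\frac1{(p-1)g(\phi)}\sum_\lambda g(\ol\lambda)^3g(\phi\lambda)\lambda(-4a).
\]
\begin{itemize}
\item Multiply the two expansions by $\phi(a)$ and sum over $a$. Orthogonality $\sum_a(\phi\chi\lambda)(a)=(p-1)\delta_{\lambda=\phi\ol\chi}$ leaves the single sum
\[
\frac{1}{(p-1)g(\phi)}\sum_\chi g(\ol\chi)^3\,g(\phi\chi)^3\,(\phi\ol\chi)(-4),
\]
since $g(\phi\lambda)=g(\ol\chi)$ when $\lambda=\phi\ol\chi$.
\item For $\chi\neq 1,\phi$ use $J(A,B)=g(A)g(B)/g(AB)$ with $AB=\phi$. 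This gives $\binom{\phi\chi}{\chi}=-\chi(-1)\,g(\phi\chi)g(\ol\chi)/g(\phi)$, and the same product arises for the other two factors $\binom{\phi\chi}{\varepsilon\chi}$ of the ${}_3\mathbb P_2$.
\item The summand is therefore $-\chi(-1)\,g(\phi)^2\binom{\phi\chi}{\chi}^3$ up to the character value at $-4$. Using $g(\phi)^2=\phi(-1)p$ produces the prefactor $p\phi(-1)$ and the argument $\chi(4)$, i.e.\ $t=4$. One may need to reindex $\chi\mapsto\ol\chi$ to align with the defining formula; the overall minus sign on the left absorbs the $(-1)^n$ and sign conventions.
\item The second equality, with $t=\tfrac14$, follows from the reflection $\chi\mapsto\phi\ol\chi$ in the character sum. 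This substitution permutes the binomial coefficients up to $\chi(-1)$ factors and turns $\chi(4)$ into $\ol\chi(4)$ times $\phi(4)=1$, at the cost of one $\phi(-1)$, which cancels the prefactor.
\end{itemize}

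For the second identity, multiply two copies of the $\Sym^2$ expansion, with characters $\lambda_1,\lambda_2$, by $\phi(a)$ and sum over $a$. Orthogonality forces $\lambda_2=\phi\ol\lambda_1$. The summand becomes
\[
g(\ol\lambda)^4\,g(\phi\lambda)^4\,\lambda(-4)\,(\phi\ol\lambda)(-4)\big/g(\phi)^2
=\phi(-1)\bigl(g(\ol\lambda)g(\phi\lambda)\bigr)^4/g(\phi)^2 .
\]
The same Gauss-to-Jacobi conversion turns this into $g(\phi)^2\binom{\phi\lambda}{\lambda}^4$ up to signs. Again $g(\phi)^2=\phi(-1)p$ gives $p\phi(-1)\,{}_4\mathbb P_3(1)$; because the arguments at $-4$ cancel, the value is $t=1$.

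The main obstacle is the bookkeeping at the degenerate characters $\chi\in\{\varepsilon,\phi\}$ (or $\lambda\in\{\varepsilon,\phi\}$). There $J(A,B)=g(A)g(B)/g(AB)$ fails: $J(A,\ol A)=-A(-1)$ for $A\ne\varepsilon$, $J(\varepsilon,\varepsilon)=p-2$, and $g(\varepsilon)=-1$ with the convention $\varepsilon(0)=0$. For these few terms I would compute both sides explicitly and check that they agree, and otherwise track the accumulated signs $\chi(-1)$, $\phi(-1)$, $(-1)^n$ carefully. If a discrepancy appears, the fallback is to derive the identity from the defining recursion of ${}_n\mathbb P_{n-1}$ via \eqref{eqn:KLsym^k}, as in \cite{Lin-Tu, KAS, Saikia}.
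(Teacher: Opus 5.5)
Your route (expansion via Lemma~\ref{lemma: sym2}, orthogonality, then conversion of Gauss sums to the symbols $\binom{A}{B}$) is the one the paper intends. Your treatment of the second identity is correct: $\binom{\phi\lambda}{\lambda}^4=g(\phi\lambda)^4g(\ol\lambda)^4/g(\phi)^4$ carries no sign, the prefactor of ${}_4\mathbb{P}_3$ is $\phi(-1)^3=\phi(-1)$, and the sum equals $g(\phi)^2\,{}_4\mathbb{P}_3(1)=p\phi(-1)\,{}_4\mathbb{P}_3(1)$.

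The gap is in the first identity, at exactly the step you wave through (``the overall minus sign on the left absorbs the $(-1)^n$ and sign conventions''). It does not absorb them. Write ${}_3\mathbb{P}_2(t)$ for the function with parameters $\phi,\phi,\phi;\varepsilon,\varepsilon$. The definition gives ${}_3\mathbb{P}_2(t)=-\frac{1}{p-1}\sum_\chi\binom{\phi\chi}{\chi}^3\chi(t)$, and for every $\chi$ one has $g(\ol\chi)^3g(\phi\chi)^3=-\chi(-1)g(\phi)^3\binom{\phi\chi}{\chi}^3$. Insert this into your collapsed sum and use $\phi(-1)g(\phi)^2=p$ and $\chi(-1)\ol\chi(-4)=\chi(1/4)$. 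You get
\[
\sum_{a}\phi(a)\Kl_2(a;p)\Kl_2^{\Sym^2}(a;p)=\frac{\phi(-1)}{(p-1)g(\phi)}\sum_\chi g(\ol\chi)^3g(\phi\chi)^3\,\ol\chi(-4)=-\frac{p}{p-1}\sum_\chi\binom{\phi\chi}{\chi}^3\chi(1/4)=p\,{}_3\mathbb{P}_2(1/4),
\]
with a plus sign. So the identity as printed, with the leading minus, fails whenever ${}_3\mathbb{P}_2(1/4)\neq 0$. At $p=5$ one computes $\sum_a\phi(a)\Kl_2(a;5)\Kl_2^{\Sym^2}(a;5)=25$ and ${}_3\mathbb{P}_2(4)={}_3\mathbb{P}_2(1/4)=5$ (here $4=1/4$ in $\FF_5$ and $\phi(-1)=1$), so the printed statement would read $-25=25$.

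What is true is the statement with $\Kl_2^{\Sym^1}(a;p)=-\Kl_2(a;p)$ in place of $\Kl_2(a;p)$, equivalently with no leading minus. This is also the sign forced by the paper's next theorem, $\tm_3(p)=-p\,{}_3\mathbb{P}_2(1/4)+\phi(-1)p^2$, via $\Kl_2^{\Sym^3}=-\Kl_2\Kl_2^{\Sym^2}+p\Kl_2$ and $\sum_a\phi(a)\Kl_2(a;p)=\phi(-1)p$.

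Relatedly, the direct computation lands at $t=1/4$ with prefactor $p$, not at $t=4$ with prefactor $p\phi(-1)$. Passing to $t=4$ is done by $\chi\mapsto\phi\ol\chi$, as in your last bullet. The reindexing $\chi\mapsto\ol\chi$ you suggest earlier is not legitimate, since it does not preserve $\binom{\phi\chi}{\chi}$.

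Separately, the obstacle you single out is not one. The formula $J(A,B)=g(A)g(B)/g(AB)$ of Lemma~\ref{lem: Gauss-Job} holds whenever $AB\neq\varepsilon$, including when $A$ or $B$ is trivial: with $\varepsilon(0)=0$ and $g(\varepsilon)=-1$, both sides equal $-1$. Every Jacobi sum occurring here is $J(\phi\chi,\ol\chi)$, whose product character is $\phi$. So there are no exceptional $\chi$ at all, and $J(A,\ol A)$ and $J(\varepsilon,\varepsilon)$ never arise. The whole content of the first identity is the sign, and carrying it through correctly corrects the statement rather than proving it. Your fallback via the recursion for ${}_n\mathbb{P}_{n-1}$ would necessarily produce the same sign.
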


\begin{thm}{\cite{Ahlgren-Ono-CalabiYau, AOP, Evans-hyper}}
	  Let $p>3$ be an odd prime, and $\phi$, $\varepsilon$ be the quadratic character and trivial character of $\FF_p^\times$, respectively. Then  
	 \begin{align*}
		\tm_3(p)=&  -p\pPPq32{\phi&\phi&\phi}{&\varepsilon&\varepsilon}{\frac14}+\phi(-1)p^2=-p\cdot a_p(f_{\text{48.3.e.a}}),\\
		\tm_4(p)=&  \phi(-1)p\cdot \pPPq43{\phi&\phi&\phi&\phi}{&\varepsilon&\varepsilon&\varepsilon}{1}+p^2=-p\cdot a_p(f_{\text{8.4.a.a}}),
	\end{align*}
	  where $a_p(f)$ is the $p$th Fourier coefficient of the Hecke eigenform $f$. 
\end{thm}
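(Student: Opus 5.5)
The statement is the final theorem: for $p>3$,
$\tm_3(p) = -p\,{}_3\mathbb{P}_2[\phi,\phi,\phi;\varepsilon,\varepsilon;\tfrac14] + \phi(-1)p^2 = -p\,a_p(f_{\text{48.3.e.a}})$ and
$\tm_4(p) = \phi(-1)p\,{}_4\mathbb{P}_3[\ldots;1] + p^2 = -p\,a_p(f_{\text{8.4.a.a}})$.

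The plan has two parts. First, I establish the $\mathbb P$-function expressions by expanding the symmetric powers. Then I identify the resulting hypergeometric values with Fourier coefficients. The second identification is already available: Theorem \ref{thm:EC}(i),(ii) proved $\tm_3(p)=-a_f(p)p$ and $\tm_4(p)=-a_f(p)p$ for these forms. The classical results of Ahlgren--Ono(--Penniston) and Evans give the same hypergeometric evaluations independently. So the real work is the character-sum identity in the first part.

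For $k=3$, the recursion \eqref{eqn:KLsym^k} with $\Kl_2^{\Sym^1}=-\Kl_2$ gives $\Kl_2^{\Sym^3}(a)=-\Kl_2(a)\Kl_2^{\Sym^2}(a)+q\Kl_2(a)$.

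1. Multiply by $\phi(a)$ and sum over $a$.
2. The term $q\sum_a\phi(a)\Kl_2(a)$ equals $q\cdot(-\tm_1(q))$. By \eqref{eq:small_moments} this is $\phi(-1)q^2$.
3. The remaining term $-\sum_a\phi(a)\Kl_2(a)\Kl_2^{\Sym^2}(a)$ is, by Lemma \ref{lemma: reduction}, equal to $p\,\phi(-1)\,{}_3\mathbb{P}_2[\ldots;4]$.
4. I then pass from argument $4$ to argument $1/4$ using the standard inversion $t\mapsto 1/t$ for ${}_3\mathbb{P}_2$ with all upper parameters $\phi$ and lower parameters trivial. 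That inversion contributes exactly the factor $\phi(-1)$.

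I must be careful about the sign convention: the stated formula has $-p\,{}_3\mathbb{P}_2$, so I will recheck the sign in Lemma \ref{lemma: reduction} against the definition of $\tm_3$. The check is a numerical test at $p=7$, where $a_f(7)=-2$ should come out.

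For $k=4$, I write $\Kl_2^{\Sym^4}=(\Kl_2^{\Sym^2})^2-q\Kl_2^{\Sym^2}-q^2$, which follows from $uv=q$. Twisting by $\phi$ and summing:

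- The first term is $p\,\phi(-1)\,{}_4\mathbb{P}_3[\ldots;1]$ by Lemma \ref{lemma: reduction}.
- The second term is $-q\,\tm_2(q)=q^2$.
- The third term vanishes, since $\sum_a\phi(a)=0$.

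This yields exactly the stated expression.

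Lemma \ref{lemma: reduction} itself follows from Lemma \ref{lemma: sym2}. Substituting the Gauss-sum expansions, the sum over $a$ against $\phi$ collapses by orthogonality to a single character sum. Gauss sums are then converted to Jacobi sums via $g(A)g(B)=J(A,B)g(AB)$, and the result is matched to the $\binom{A\chi}{B\chi}$ normalization.

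The main obstacle is this Gauss-to-Jacobi bookkeeping: tracking the factors $\chi(-4)$, $\phi(-1)$, and the $-B(-1)$ in $\binom{A}{B}$, and the degenerate characters where $J$ differs from the Gauss-sum quotient. Those degenerate terms produce the correction terms (such as $\phi(-1)p^2$), so I would isolate them explicitly.
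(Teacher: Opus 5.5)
\textbf{There is a sign gap at $k=3$.} Your route is the one the paper indicates: the recursion, Lemma \ref{lemma: reduction}, the inversion $4\mapsto\tfrac14$, and Theorem \ref{thm:EC} for the equality with $-p\,a_p(f)$. Your $k=4$ computation is complete and correct.

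For $k=3$, steps 1--4 as written prove $\tm_3(p)=+p\,{}_3\mathbb{P}_2(\tfrac14)+\phi(-1)p^2$, which is false. The reason is that the first identity of Lemma \ref{lemma: reduction}, as printed, has the wrong sign. Here ${}_3\mathbb{P}_2(t)$ denotes the function with upper parameters $\phi,\phi,\phi$ and lower parameters $\varepsilon,\varepsilon$. Concretely:
\begin{itemize}
\item At $p=5$ one finds $\sum_a\phi(a)\Kl_2(5;a)^3=50$, hence $\sum_a\phi(a)\Kl_2(5;a)\Kl_2^{\Sym^2}(5;a)=50-25=25$. Meanwhile ${}_3\mathbb{P}_2(\tfrac14)={}_3\mathbb{P}_2(4)=5$.
\item At $p=7$ the same sum is $-112+49=-63$. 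Meanwhile ${}_3\mathbb{P}_2(\tfrac14)=-9$ and ${}_3\mathbb{P}_2(4)=9$.
\end{itemize}
The correct statement has no minus sign on the left:
\[ \sum_{a\in\FF_p^\times}\phi(a)\Kl_2(p;a)\Kl_2^{\Sym^2}(p;a)=\phi(-1)p\,{}_3\mathbb{P}_2(4)=p\,{}_3\mathbb{P}_2(\tfrac14). \]
With this, your step 3 yields the stated $-p\,{}_3\mathbb{P}_2(\tfrac14)+\phi(-1)p^2$. It gives $\tm_3(5)=0$ and $\tm_3(7)=14=-7\,a_7(f_{\text{48.3.e.a}})$, as required.

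\textbf{A numerical check does not close the gap.} You did flag the sign, but a test at $p=7$ only tells you which sign to aim for. It cannot establish the identity for all $p>3$, and this identity is the only nontrivial input in the first equality for $k=3$. The derivation is short, so you should carry it out.
\begin{itemize}
\item Insert Lemma \ref{lemma: sym2}. Orthogonality forces the character attached to $\Kl_2$ to be $\phi\bar\chi$. This leaves $\frac{1}{(p-1)g(\phi)}\sum_\chi g(\phi\chi)^3g(\bar\chi)^3\chi(-4)$.
\item Since $\phi\chi\cdot\bar\chi=\phi\neq\varepsilon$ for every $\chi$, one has $g(\phi\chi)g(\bar\chi)=g(\phi)J(\phi\chi,\bar\chi)$ with no exceptional $\chi$. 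Also $g(\phi)^2=\phi(-1)p$.
\item On the other side, $\binom{\phi\chi}{\chi}^3=-\chi(-1)J(\phi\chi,\bar\chi)^3$ turns the closed formula into ${}_3\mathbb{P}_2(t)=\frac{1}{p-1}\sum_\chi J(\phi\chi,\bar\chi)^3\chi(-t)$. Hence the sum equals $\phi(-1)p\,{}_3\mathbb{P}_2(4)$.
\item The substitution $\chi\mapsto\phi\bar\chi$ gives ${}_3\mathbb{P}_2(t)=\phi(-t)\,{}_3\mathbb{P}_2(1/t)$. At $t=4$ this is your factor $\phi(-1)$.
\end{itemize}
This derivation also shows that your last paragraph misplaces the correction term. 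No degenerate Jacobi sums arise here; the term $\phi(-1)p^2$ comes entirely from $\tm_1$ in your step 2.
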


\medskip

For higher powers $k$, the expressions involve inductive formula of $\mathbb P$-functions. 
	 \begin{prop}
	 	  Let $p>3$ be an odd prime, and $\phi$, $\varepsilon$ be quadratic character and trivial character of $\FF_p^\times$, respectively. We have 
	 	  \begin{align*}
	 		\tm_5(p) 
	 		=&   -p\sum_{t \atop{y\neq 0, 1, 1/4t}} \phi\(\frac t{t-1}\) \pPPq32{\phi&\phi&\phi}{&\varepsilon&\varepsilon}{(1-y)(t-\frac1{4y})}\\
	 		& +2p^2\pPPq32{\phi&\phi&\phi}{&\varepsilon&\varepsilon}{\frac14} +2p+p(p+1)\(\frac{-3}p\).
	 	\end{align*}
	 \end{prop}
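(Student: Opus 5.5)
We expand each summand $\Kl_2^{\Sym^5}(p;a)$ in powers of $\Kl_2(p;a)$, then rewrite the twisted sums of powers as $\mathbb P$-values using Lemma \ref{lemma: sym2} and Lemma \ref{lemma: reduction}.

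\textbf{Expansion.} Iterating the recursion \eqref{eqn:KLsym^k} from $\Kl_2^{\Sym^0}=1$ and $\Kl_2^{\Sym^1}=-\Kl_2$ gives, as in the proof of Lemma \ref{lemma:tm5},
\[
\Kl_2^{\Sym^5}=-\Kl_2^5+4p\Kl_2^3-3p^2\Kl_2 .
\]
We prefer to group this around $\Kl_2\,(\Kl_2^{\Sym^2})^2$. Since $\Kl_2^{\Sym^2}=\Kl_2^2-p$, one has
\[
\Kl_2^{\Sym^5}=-\Kl_2(\Kl_2^{\Sym^2})^2+2p\,\Kl_2\Kl_2^{\Sym^2}-p^2\Kl_2 .
\]
Multiplying by $\phi(a)$ and summing over $a\in\FF_p^\times$:
\begin{itemize}
\item The last term gives $-p^2\cdot\sum_a\phi(a)\Kl_2(p;a)$. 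By \eqref{eq:small_moments} this sum is $\tm_1(p)$ up to sign, so the term contributes a multiple of $\phi(-1)p^3$.
\item The middle term is, by Lemma \ref{lemma: reduction}, $-2p^2\,{}_3\mathbb P_2[\phi,\phi,\phi;\varepsilon,\varepsilon;\tfrac14]$ up to sign. This matches the $2p^2\,{}_3\mathbb P_2(\tfrac14)$ term in the statement.
\end{itemize}

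\textbf{Main term.} The principal work is $S=\sum_a\phi(a)\Kl_2(p;a)\Kl_2^{\Sym^2}(p;a)^2$. We write one factor $\Kl_2^{\Sym^2}(p;a)$ through its Gauss-sum expansion in Lemma \ref{lemma: sym2}. Alternatively, and closer to the shape of the claimed formula, we use the finite-field integral representation: $\Kl_2\Kl_2^{\Sym^2}$ at the argument $a$ is, by the first identity in Lemma \ref{lemma: reduction} read pointwise before summing, essentially a ${}_3\mathbb P_2$-value. The pointwise version is obtained by Mellin inversion of the Gauss-sum product
\[
g(\ol\chi)^2\star g(\ol\chi)^3g(\phi\chi).
\]
The plan is then as follows.
\begin{enumerate}
\item Write $\Kl_2(p;a)\Kl_2^{\Sym^2}(p;a)$ as a character sum in an auxiliary variable $t$ weighted by $\phi(t/(t-1))$.
\item Write the remaining $\Kl_2^{\Sym^2}(p;a)$ via $\Kl_2(p;a)^2-p$. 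Expand $\Kl_2(p;a)^2=\sum_{x,y}\psi(x+y+a/x+a/y)$.
\item Perform the $a$-sum with $\phi(a)$, which produces a quadratic Gauss sum. Change variables so that the inner argument becomes $(1-y)(t-\frac1{4y})$.
\end{enumerate}
The $y$-sum that remains is recognised, through the recursive definition of ${}_3\mathbb P_2$ (one application of the inductive formula with $A_{n-1}=\phi$, $B_{n-1}=\varepsilon$), as $\sum_y\phi(\cdots)\,{}_2\mathbb P_1$. This reassembles into the displayed ${}_3\mathbb P_2$-value.

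\textbf{Main obstacle and boundary terms.} Steps 2 and 3 are where I expect the obstacle: keeping track of the degenerate loci $y=0,1,1/(4t)$ and $t=0,1$. Each excluded locus contributes a boundary term, which collapses to Jacobi sums of $\phi$ or to $\sum_a\phi(a)\Kl_2(p;a)^j$ for small $j$; these are known from \eqref{eq:small_moments}. These should account for $2p$ and $p(p+1)\big(\frac{-3}{p}\big)$. The $\big(\frac{-3}p\big)$ should arise from a Jacobi-sum evaluation $\sum_x\phi(x^2+x+1)$ at a degenerate fibre. I would first verify the constants numerically for a few primes, say $p=5,7,11$, against $\tm_5(p)$, and only then fix signs and factors of $\phi(-1)$ by careful bookkeeping.
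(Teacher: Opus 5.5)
Your regrouping of $\Kl_2^{\Sym^5}$ is wrong. With $K=\Kl_2(p;a)$ one already has $-K(K^2-p)^2+2pK(K^2-p)=-K^5+4pK^3-3p^2K$. So the correct identity is $\Kl_2^{\Sym^5}=-\Kl_2(\Kl_2^{\Sym^2})^2+2p\,\Kl_2\Kl_2^{\Sym^2}$, i.e.\ $\Kl_2^{\Sym^1}(\Kl_2^{\Sym^2})^2-2p\,\Kl_2^{\Sym^1}\Kl_2^{\Sym^2}$, which is the decomposition the paper starts from. Your extra $-p^2\Kl_2$ is spurious: it adds $-p^2\sum_a\phi(a)\Kl_2(p;a)=-\phi(-1)p^3$, a term with no counterpart in the statement, and your plan never disposes of it. There is a second bookkeeping problem. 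Your step 2 writes the remaining factor as $\Kl_2^2-p$, and the $-p$ feeds another multiple of $p^2\,{}_3\mathbb P_2[\phi,\phi,\phi;\varepsilon,\varepsilon;\frac14]$ into the sum. Your claim that the middle term alone accounts for $2p^2\,{}_3\mathbb P_2(\frac14)$ is therefore inconsistent with your own plan.

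The more serious gap is the main term, because Lemma \ref{lemma: reduction} has no pointwise reading. By Lemma \ref{lemma: sym2}, the coefficient of $\mu(a)$ in $\Kl_2(p;a)\Kl_2^{\Sym^2}(p;a)$ is, up to a constant, the convolution $\sum_{K\lambda=\mu}g(\ol K)^2g(\ol\lambda)^3g(\phi\lambda)\lambda(-4)$. That is not a product of Gauss or Jacobi sums, so this function is not of hypergeometric shape. The lemma exists only because the $\phi$-twisted sum over $a$ forces $K\lambda=\phi$ and leaves a single character sum. Consequently your steps 1--3 have no concrete content: ``change variables so that the inner argument becomes $(1-y)(t-\frac1{4y})$'' is the whole proof. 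An additive expansion of $\Kl_2^2$ followed by a quadratic Gauss sum gives no route to a ${}_3\mathbb P_2$ with parameters $(\phi,\phi,\phi)$.

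The idea you are missing is to stay on the Mellin side for all three factors, as the paper does. Expand each factor by Lemma \ref{lemma: sym2}, sum over $a$ by orthogonality to get a double sum over characters $(\chi,K)$, and pass to Jacobi sums with Lemma \ref{lem: Gauss-Job}. For $\chi\neq\varepsilon$, the inner $K$-sum is exactly the convolution above; this is the correct version of your step 1. After $K\mapsto\ol\chi K$ it becomes, up to elementary factors, $g(\ol\chi)^2$ times ${}_3\mathbb P_2[\ol\chi,\ol\chi,\phi;\varepsilon,\varepsilon;4]$. The factor $g(\ol\chi)^2$ then combines with the Gauss sums of the other $\Kl_2^{\Sym^2}$ factor into essentially $\binom{\phi\chi}{\chi}^3$. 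The recursive definition unfolds ${}_3\mathbb P_2[\ol\chi,\ol\chi,\phi;\varepsilon,\varepsilon;4]$ as $\sum_{t,y}\phi(\frac{t}{1-t})\ol\chi(y)\chi(1-y)\chi(1-4ty)$. Moving the $\chi$-sum inside then produces ${}_3\mathbb P_2[\phi,\phi,\phi;\varepsilon,\varepsilon;(1-y)(t-\frac1{4y})]$.

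So $t,y$ are the integration variables of a hypergeometric sum whose parameters are Mellin variables. The values $y=0,1,\frac1{4t}$ are excluded only because these characters vanish there, and they produce no boundary terms. The constants $2p$ and $p(p+1)\bigl(\frac{-3}{p}\bigr)$ come instead from the trivial-character terms, where $g(A)g(\ol A)=A(-1)p$ or $J(A,B)=g(A)g(B)/g(AB)$ fails. These terms are evaluated via $\frac1{p-1}\sum_KJ(AK,\ol K)K(a)=A(1-a)$. Your candidate source $\sum_x\phi(x^2+x+1)$ cannot supply $\bigl(\frac{-3}{p}\bigr)$, since it equals $-1$ for $p>3$. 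Checking a few primes numerically cannot replace this bookkeeping.
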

	 \medskip

	To express $\tm_5$ in terms of $\mathbb P$-functions, we will apply the following basic properties. 
	\begin{lem}\label{lem: Gauss-Job}
		Let $A$, $B\in\widehat{\FF_q^\times}$. 
	   \begin{align*}
		J(A,B) =& \frac{g(A)g(B)}{g(AB)}, \quad AB\neq \varepsilon, \\
	     g(A)g(\ol A) &=A(-1)q, \quad A\neq \varepsilon,
		\end{align*}
		and, for $a\in \FF_q^\times$, 
		$$
		  \frac1{q-1}\sum_{K\in \widehat{\FF_q^{\times}}}J(AK,\ol K)K(a) = A(1-a). 
		$$
		
	\end{lem}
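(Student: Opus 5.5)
All three identities are finite character-sum manipulations. The plan is to expand the left-hand sides as double sums, change variables, and collapse the result with the orthogonality relations for additive and multiplicative characters. Throughout, the convention $\chi(0)=0$ is kept in force, including for $\varepsilon$.

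\emph{First identity.} I would write
\[
g(A)g(B)=\sum_{x,y\in\FF_q}A(x)B(y)\psi(x+y)
\]
and group the terms according to $s=x+y$.
\begin{itemize}
\item For $s=0$ the contribution is $\sum_x A(x)B(-x)=B(-1)\sum_x AB(x)$. This vanishes because $AB\neq\varepsilon$.
\item For $s\neq0$ I substitute $x=su$, $y=s(1-u)$. This bijects $u\in\FF_q$ with the pairs $(x,y)$ such that $x+y=s$. The contribution becomes $\sum_{s\neq0}AB(s)\psi(s)\sum_u A(u)B(1-u)=g(AB)J(A,B)$.
\end{itemize}
Dividing by $g(AB)$ gives the claim. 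The divisor is nonzero: once the second identity is known, $g(AB)g(\overline{AB})=\pm q$ since $AB\neq\varepsilon$.

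\emph{Second identity.} For $A\neq\varepsilon$ I expand
\[
g(A)g(\overline A)=\sum_{x,y\neq0}A(x/y)\psi(x+y).
\]
Substituting $x=ty$ turns this into $\sum_{t\neq0}A(t)\sum_{y\neq0}\psi(y(1+t))$. The inner sum equals $q-1$ if $t=-1$ and $-1$ otherwise. Hence the total is $qA(-1)-\sum_{t}A(t)=qA(-1)$.

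\emph{Third identity.} I expand $J(AK,\overline K)=\sum_{b\neq0,1}A(b)K(b)\overline K(1-b)$ and interchange the sums over $b$ and $K$. Multiplicative orthogonality gives
\[
\frac1{q-1}\sum_K K\Big(\frac{ab}{1-b}\Big)=\delta\Big(\frac{ab}{1-b}=1\Big),
\]
so only the root $b=1/(1+a)$ survives. When $a=-1$ no $b$ survives, and the sum is $0$, consistent with a character evaluated at $0$.

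This is the step I expect to be the main obstacle, and it is a matter of normalization rather than technique. With the definition of $J$ exactly as written, the surviving term evaluates to $A\big(1/(1+a)\big)=\overline A(1+a)$, not $A(1-a)$. I would therefore recheck the computation first. If it holds, I would reconcile it with the stated form by replacing $a$ by $-a$ and $A$ by $\overline A$, equivalently by inserting a factor $K(-1)$ and using the pairing $(\overline A K,\overline K)$. I would then record whichever normalization is used later in the reduction of $\tm_5(p)$ to $\mathbb P$-functions, since the subsequent manipulations depend on it.
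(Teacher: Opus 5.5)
The paper gives no proof of this lemma; it is quoted as a list of standard facts, so there is no route to compare with. Your arguments for the first two identities are the usual orthogonality computations and are correct. The boundary terms ($u=0,1$ in the first identity, $x=0$ in the $s=0$ part) vanish under the convention $\chi(0)=0$, which the paper imposes for $\varepsilon$ as well. Deducing $g(AB)\neq 0$ from the second identity is the right way to justify the division.

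For the third identity your computation is also correct, and you should state its conclusion outright rather than plan to recheck it. For $a\in\FF_q^\times$,
\[
\frac{1}{q-1}\sum_{K}J(AK,\overline{K})K(a)=\sum_{b\neq 0,1}A(b)\cdot\big[\,ab=1-b\,\big]=\overline{A}(1+a),
\]
where $[\cdot]$ denotes the indicator. When $a\neq-1$, the unique solution $b=(1+a)^{-1}$ is automatically different from $0$ and $1$.

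So the identity as printed is false, and no proof of it can exist. A minimal counterexample is $A=\varepsilon$. Then $J(\varepsilon,\varepsilon)=q-2$ and $J(K,\overline{K})=-K(-1)$ for $K\neq\varepsilon$, so the left-hand side equals $1-\delta_{a,-1}$. For $q$ odd and $a=1$ this is $1$, whereas $\varepsilon(1-a)=\varepsilon(0)=0$.

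The correct statement is the one you arrived at, for example $\frac{1}{q-1}\sum_K J(\overline{A}K,\overline{K})K(-a)=A(1-a)$. It is also what the paper's own conventions require. With $\binom{A\chi}{\chi}=-\chi(-1)J(A\chi,\overline{\chi})$, your formula at $a=-t$ reads $-\frac{1}{q-1}\sum_\chi\binom{A\chi}{\chi}\chi(t)=\overline{A}(1-t)={}_1\mathbb{P}_0[A;t]$. This is formally the $n=1$ instance of the displayed character-sum expression for ${}_n\mathbb{P}_{n-1}$.

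Your intention to track the normalization downstream is well placed. The lemma is applied with $A=\phi$ quadratic, where the correction amounts to replacing $\phi(1-a)$ by $\phi(1+a)$. For instance, your formula gives $\sum_K J(\overline{K},\phi K)\overline{K}(-4)=(p-1)\phi(3)$, whereas the computation of $I_{1,2}$ uses $(p-1)\phi(-3)$. These differ when $p\equiv 3\bmod 4$ (already at $p=7$), so that step should be rechecked.
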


\begin{proof}
	From the inductive formula \eqref{eqn:KLsym^k}, for a fixed prime $p$, we can simplify the term $	\Kl_2^{\Sym^5}(a;p)$  as
	 \begin{align*}
	\Kl_2^{\Sym^5}(a;p) =& \Kl_2^{\Sym^1}(a;p) \Kl_2^{\Sym^4(a;p)}-p\cdot \Kl_2^{\Sym^{3}}(a;p)\\
		=&\Kl_2^{\Sym^1}(a;p) \(\Kl_2^{\Sym^2}(a;p)\)^2 -2p\Kl_2^{\Sym^1}(a;p) \Kl_2^{\Sym^2}(a;p),
	\end{align*}
	and hence,
	  \begin{align*}
		\tm_5(p) 
		=&\sum_{a\in \FF_q}\phi(a)\Kl_2^{\Sym^1}(a;p) \(\Kl_2^{\Sym^2}(a;p)\)^2 -2p\sum_{a\in \FF_q}\phi(a)\Kl_2^{\Sym^1}(a;p) \Kl_2^{\Sym^2}(a;p).
	\end{align*}
	For short hand, we write $\tm_5(p) =I_1+I_2$, where $I_1$ and $I_2$ are the first and second terms in the above identity, respectively.   Note the $I_2$ is related to $\tm_3(p)$ and is 
	$$
	  -I_2= 2p^2\pPPq32{\phi&\phi&\phi}{&\varepsilon&\varepsilon}{\frac14},
	$$
	which we will omit the detail here. 
	
	For $I_1$, by Lemma \ref{lemma: sym2} and orthogonality property, we deduce that
	\begin{align*}
	  -I_1 =&\frac1{p(p-1)^2} \sum_{\chi \in \widehat{\FF_p^\times}} g(\ol \chi)^3g(\phi\chi) \sum_{K\in \widehat{\FF_p^\times}}g(\ol K)^2 g(\ol{K\chi})g(\phi\chi K)^3\ol K(-4)\\
	      {\overset{\chi\mapsto \phi\ol \chi}=}& \frac1{p(p-1)^2} \sum_{\chi \in \widehat{\FF_p^\times}} g(\phi \chi)^3g(\ol \chi) \sum_{K\in \widehat{\FF_p^\times}}g(\ol K)^2 g(\phi\chi\ol{K})g(\ol\chi K)^3\ol K(-4).
	\end{align*}
 We now apply the basic properties in Lemma \ref{lem: Gauss-Job} to $I_1$. 
 Firstly, we split $I_1$ into two parts: 
		\begin{align*}
		I_1 =&\frac{g(\phi)}{p(p-1)^2} \sum_{\chi \neq \varepsilon}  g(\phi \chi)^3g(\ol \chi)^3 \sum_{K\in \widehat{\FF_p^\times}}J(\ol \chi K, \ol K)^2 J(\ol \chi K, \phi\chi\ol K) \ol K(-4)\\
		&-\frac{g(\phi)^3}{p(p-1)^2}  \sum_{K\in \widehat{\FF_p^\times}} g(\ol K)^2g(\phi\ol K)g (K)^3\ol K(-4)\\
		   =&I_{1,1}+I_{1,2},
	\end{align*}
 where
		\begin{align*}
		I_{1,1} :=&\frac{g(\phi)}{p(p-1)^2} \sum_{\chi \in \widehat{\FF_p^\times}}  g(\phi \chi)^3g(\ol \chi)^3 \sum_{K\in \widehat{\FF_p^\times}}J(\ol \chi \ol K,  K)^2 J(\ol \chi \ol K, \phi\chi K)  K(-4),\\
		I_{1,2} :=&\frac{g(\phi)^4}{p(p-1)^2}\sum_{K\in \widehat{\FF_p^\times}}J( K, \ol K)^2 J( \ol K, \phi K) \ol K(-4)\\
		         &-\frac{g(\phi)^3}{p(p-1)^2}  \sum_{K\in \widehat{\FF_p^\times}} g(\ol K)^2g(\phi\ol K)g (K)^3K(-4).
	\end{align*}
	By Lemma \ref{lem: Gauss-Job}, we can further simplify $I_{1,2}$ as 
	\begin{align*}
	I_{1,2} =&\frac{g(\phi)^4}{p(p-1)^2}\(\sum_{K\in \widehat{\FF_p^\times}} J(\ol  K, \phi K) \ol K(-4)-J(\varepsilon,\phi)-J(\varepsilon,\varepsilon)^2\)\\
	&-\frac{g(\phi)^4}{p(p-1)^2}  \(p^2\cdot \sum_{K\in \widehat{\FF_p^\times}} J( K, \phi\ol K) K(-4) -p^2\cdot J(\varepsilon,\phi) -1\)\\
	=&\frac{p^2}{p(p-1)^2}\(\phi(-3)(p-1)+1-(p-2)^2-\phi(-3)p^2(p-1)-p^2+1\)\\
	=&-\phi(-3)p(p+1)-2p.
\end{align*}
Regarding $I_{1,2}$, we will make a change of variable using $K\mapsto \ol \chi K$ to make the expression simpler. That is,
	\begin{align*}
	I_{1,1} =&\frac{g(\phi)}{p(p-1)^2} \sum_{\chi \in \widehat{\FF_p^\times}}  g(\phi \chi)^3g(\ol \chi)^3 \chi(-4)\sum_{K\in \widehat{\FF_p^\times}}J(\ol \chi  K, \ol  K)^2 J(\phi K, \ol K)  K(-4),\\
	&=-\frac{g(\phi)^4}{p(p-1)^2} \sum_{\chi \in \widehat{\FF_p^\times}}  J(\phi \chi, \ol \chi)^3 \ol \chi(4)\cdot (p-1)(-\phi(-1)) \pPPq32{\ol \chi&\ol \chi&\phi}{&\varepsilon&\varepsilon}4\\
		&=\frac{\phi(-1)p}{p-1} \sum_{\chi \in \widehat{\FF_p^\times}}  \CCC{\phi\chi}{\chi}^3\chi(-1/4)\cdot \pPPq32{\ol \chi&\ol \chi&\phi}{&\varepsilon&\varepsilon}4.
\end{align*}
From the $\mathbb P$-expression, we can write 
$$
	 \pPPq32{\ol \chi&\ol \chi&\phi}{&\varepsilon&\varepsilon}4 = \sum_{t,y} \phi\(\frac{t}{1-t}\)\ol \chi(y)\chi(1-y)\chi(1-4ty).
$$
This gives  
	\begin{align*}
	I_{1,1}
	&=\frac{\phi(-1)p}{p-1} \sum_{t,y} \phi\(\frac{t}{1-t}\) \sum_{\chi \in \widehat{\FF_p^\times}}  \CCC{\phi\chi}{\chi}^3\chi\(\frac{-(1-y)(1-4ty)}{4y}\)\\
	&=\phi(-1) \sum_{t,y} \phi\(\frac{t}{1-t}\)  \pPPq32{\phi&\phi&\phi}{&\varepsilon&\varepsilon}{(1-y)(t-\frac1{4y})},
\end{align*}
which leads to the desired expression in the Proposition.
\end{proof}

	 \begin{rem}    As an application in the theory of modular forms, it is worthy to mention that it would be interesting to investigate relation between the double sums involved hypergeometric functions in the proposition and certain space of cusp forms.  In the classical case, the space of cusp forms on $\Gamma_1(N)$ are spanned by Poincare series, whose Fourier coefficients are (twisted) Kloosterman sums (see \cite[Section 9.3]{Cohen-Stromberg} for example).   In the works of  \cite{Scholl, HLLT, FOP, Fuselier, Lennon1, Lennon2}, the traces of Hecke operators on the certain subgroups of triangle groups commensurable with $\mbox{PSL}_2(\ZZ)$ can be computed via $\mathbb P$-functions and suitable rational functions as the arguments. It might be interesting to see the connection from what  the modularity of the Kloosterman sheaves suggest.  
	
	 \end{rem}

\subsection{$L$-functions}\label{sect:A.L-func}
We list what we know about the shapes
of the $L$-functions $L_k(s)$
and the completed one $\Lambda_k(s)$
associated with the Galois representations $\rho_{k,\ell}$
(or the motive $\Motive_k$)
in Theorem \ref{thm:GR},
as in \cite[\S 5]{FSY} and \cite[\S 8]{FSY:Bessel}.
Since $\rho_{k,\ell}$ is potentially automorphic,
$\Lambda_k(s)$ extends meromorphically to $\CC$
and admits the functional equation
$\Lambda_k(k+2-s) = \pm\Lambda_k(s)$.

\subsubsection*{Euler factors}
By Propositions \ref{oddrep} and \ref{evenrep},
the local Euler factor $L_k(p;p^{-s})^{-1}$
of $L_k(s)$ at an odd prime $p$ is given by
\begin{align*}
L_k(p;T) &= \det(1-F_pT\mid V_{k,\ell}^{I_p}) \\
&= \begin{cases}
Z_k(p;T) \prod_{a\in\Theta_k(p)^+}
\Big( 1- \Big(\frac{(-1)^{(1+ap)/2}2a'}{p}\Big) p^{(k+1)/2}T \Big)
& \text{for $k$ odd}, \\
Z_k(p;T) (1-p^{k/2})^{-\delta_{2+4\ZZ}(k)}
& \text{for $k$ even},
\end{cases}
\end{align*}
where
$Z_k(p;T)$ and $\Theta_k(p)^+$ are defined in
\eqref{eq:localfactor_Z} and \eqref{eq:Thetapm}, respectively, and
$a' = ap^{-v_p(a)}$ is the prime-to-$p$ part of $a$.
For $k\leq 6$,
the local Euler factor at $2$ is trivial.

\subsubsection*{Conductors}
In general, the compatible system $\{\rho_{k,\ell}\}_\ell$
is wildly ramified at prime $2$.
We write its conductor as
\[ \cond_k = 2^c\cond'_k \]
where $\cond'_k$ is odd.
We have
\[ \cond'_k = \begin{cases}
1_\boxtimes 3_\boxtimes 5_\boxtimes \cdots k_\boxtimes,
& \text{$k$ odd}, \\
2_\vartriangle 4_\vartriangle 6_\vartriangle \cdots k_\vartriangle,
& \text{$k$ even}.
\end{cases} \]
Here $n_\boxtimes$ (the square-free part)
is the product of prime divisors $p$ of $n$
with odd valuation $v_p(n)$,
and $n_\vartriangle$ (the odd part of the radical)
is the product of odd prime divisors.
For example,
for $k=1,3,5$,
we have $c=2,4,6$, respectively,
while for $k=4,6$, $c=3$.

\subsubsection*{Gamma factors}
The gamma factor of $\Motive_k$
is determined by the Hodge numbers of its de Rham realization $V_{k,\dR}$
together with the action $\iota$ induced by the complex conjugation.
Notice that $\iota$ switches
$V_{k,\dR}^{pq}$ and $V_{k,\dR}^{qp}$
in the Hodge decomposition,
and $V_{k,\dR}^{(k+1)/2,(k+1)/2} \neq 0$
only when $k\equiv 1\bmod{4}$.
In the latter, $h^{(k+1)/2,(k+1)/2} = 1$.
Consequently,
by \eqref{eq:det_V},
$\iota$ acts on $V_{k,\dR}^{(k+1)/2,(k+1)/2}$
by
\[ (-1)^{(k-1)/4}\det\rho_{k,\ell}(\iota) = 1 = -(-1)^{(k+1)/2}. \]

The gamma factor of $\Motive_k$ is then given by
\[ L_k(\infty,s) = \pi^{-ms/2}
\prod_{j=1}^m\Gamma\Big(\frac{s+1-j}{2}\Big),
\quad
m = \rk\Motive_k = \flr{\frac{k+1}{2}}-\delta_{2+4\ZZ}(k). \]

\subsubsection*{Periods}
For the calculation of the period realizations
of the twisted moments of the Bessel connection $\Kl_2$ over $\QQ$,
see \cite{Chuang}.
Following the strategy of \cite[\S 7]{FSY:Bessel},
we now show that they coincide with the period realizations
of the motives $\Motive_k$,
focusing on constructing certain rapid decay cycle classes
for $(U_0, \tilde{f}_k)$
(i.e., rapid decay for the function $e^{-\tilde{f}_k}$).
Here, $U_0 = \Gm^{k+1}$ over $\QQ$
with Cartesian coordinates $\{t,y_i\}_{i=1}^k$
and $\tilde{f}_k = \frac{t}{2}\sum_{i=1}^k y_i+y_i^{-1}$,
regarded as a regular function on $U_0$.\footnote{
One uses the change of variables
$z = (t/2)^2, x_i = (t/2)y_i$
so that $\sum_{i=1}^k x_i+zx_i^{-1} = \tilde{f}_k$.}
Consider the action of $\symgp_k\times\mu_2$ on $(U_0,\tilde{f}_k)$,
extending that on $\sK\subset\Gm^k$
in the beginning of \S\ref{sect:motives},
where $\symgp_k$ acts trivially on the variable $t$
and the generator of $\mu_2$ sends $t$ to $-t$.
One has
$\Motive_k = W_{k+1}\coH^{k+1}(U_0,\tilde{f}_k)_\chi$,
the $\chi$-isotypic part of the weight $k+1$ piece.

Let $U = U_0(\CC)$.
We consider (oriented, unbounded) chains on $U$.
Let $c_p$ be the circles $|t|=1$ if $p=0$,
and $|y_p|=1$ for $1\leq p\leq k$,
equipped with the counterclockwise orientation.
Consider the chains
\[\textstyle \tilde\alpha_i' = [1,\infty) \times\big(\prod_{p=1}^i c_p\big)
	\times \RR_{>0}^{k-i},
\quad
0\leq i\leq \flr{(k-1)/2} \]
with boundaries
\[\textstyle \partial\tilde\alpha_i' = -\{1\} \times\big(\prod_{p=1}^i c_p\big)
	\times \RR_{>0}^{k-i}. \]
For $1\leq r\leq k$,
define the chain (\cite{FSY:Bessel})
$\tilde{\gamma}_r\colon [0,1]^r\times (0,1)^{k+1-r} \to U$
by
\[
(s_0, \ldots, s_k) \mapsto
\begin{cases}
t=e^{2\pi i \cdot s_0}, & \\
y_p=e^{2\pi i \cdot s_p}, & 1\leq p< r, \\
y_p=\begin{cases}
e^{2\pi i \cdot s_0} \cdot 3s_p, & 0<s_p\leq 1/3, \\
e^{2\pi i \cdot s_0(3-6s_p)}, & 1/3\leq s_p\leq 2/3, \\
e^{-2\pi i \cdot s_0}\cdot\frac{1}{2}\frac{s_p}{1-s_p}, & 2/3\leq s_p<1,
\end{cases} & r\leq p\leq k. \\
\end{cases}\]
In particular,
$\partial\tilde\gamma_k = -2\{1\}\times\prod_{p=1}^k c_p$.
Consider the half of $\tilde\gamma_r$
\[ \tilde\xi_r(s_0,s_p) = \tilde\gamma_r(2^{-1}s_0,s_p),
\quad
\begin{aligned}
0\leq s_p \leq 1 \quad \text{if $0\leq p < r$}, \\
0< s_p < 1 \quad \text{if $r\leq p\leq k$},
\end{aligned}\]
with boundary
\[\textstyle
\partial\tilde\xi_r= \{-1\} \times (\prod_{p=1}^{r-1} c_p)
	\times \prod_{p=r}^k (\RR_{<0} -c_p)
	- \{1\} \times (\prod_{p=1}^{r-1} c_p) \times \RR_{>0}^{k-r+1}.
\]
Then the function $e^{-\tilde{f}_k}$
decays rapidly along the chains $\tilde\alpha'_i,\tilde\gamma_r,\tilde\xi_r$.
We glue these chains to form cycles.

\newcommand{\cS}{\mathcal{S}}
Let $(\vartheta_n\in\QQ)_{n\geq 0}$ be the sequence given by $\vartheta_0=1$
and the recursive relations\footnote{
In fact,
$\vartheta_n$ is the constant term of the Euler polynomial $E_n(x)$
so that $\sum_{n=0}^\infty \vartheta_n\frac{z^n}{n!} = \frac{2}{e^z+1}$
(\cite[(15)]{Chuang}).}
\begin{equation}\label{eq:euler_const}
\sum_{p=0}^{n-1}\binom{n}{p}\vartheta_p + 2\vartheta_n = 0,
\quad n\geq 1.
\end{equation}
Set $\tilde\xi_{k+1} = \tilde\gamma_k$.
For $0\leq i\leq \flr{(k-1)/2}$, let
\[ \tilde\alpha_i = \frac{1}{|\symgp_{k-i}\times\mu_2|}
\sum_{g\in\symgp_{k-i}\times\mu_2 }
\chi(g)g\Big(\tilde\alpha_i' - \frac{1}{2}
	\sum_{r=0}^{k-i}(-1)^r\binom{k-i}{r}\vartheta_r\tilde\xi_{i+r+1}\Big). \]
where we regard $\symgp_{k-i}\times\mu_2$
as a subgroup of $\symgp_k\times\mu_2$
by letting $\symgp_{k-i}$ act on the last $k-i$ components.
By \eqref{eq:euler_const},
one checks readily that $\partial\tilde\alpha_i = 0$.

\begin{prop}
The period structure of $\Motive_k$
coincides with the one obtained in \cite[Cor.26]{Chuang}.
\end{prop}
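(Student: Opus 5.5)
The plan is to mirror \cite[\S 7]{FSY:Bessel}, replacing the $\chi^+$-isotypic parts by the $\chi$-isotypic ones. Concretely, I will show that the cycles $\tilde\alpha_i$, $0\leq i\leq \flr{(k-1)/2}$, span the $\chi$-isotypic rapid decay homology $\coH_{k+1}^{\mathrm{rd}}(U,\tilde f_k)_\chi$, or at least its image paired with $W_{k+1}$. Their period pairings with de Rham classes then reproduce the period matrix of \cite[Cor.26]{Chuang}, which is expressed through moments of Bessel functions and regularized by the Euler constants $\vartheta_n$.

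First I check that the $\tilde\alpha_i$ are honest rapid decay cycles. The function $e^{-\tilde f_k}$ decays along $\tilde\alpha_i',\tilde\gamma_r,\tilde\xi_r$. The relation \eqref{eq:euler_const} makes the boundary terms of $\tilde\alpha_i'$ and of the $\tilde\xi_{i+r+1}$ cancel after $\chi$-symmetrization; the paper already asserts this. Next I compute the periods of a basis of $\coH^{k+1}_{\dR}(U_0,\tilde f_k)_\chi$, given by the forms $t^{j}\,\frac{\de t}{t}\wedge\bigwedge_p\frac{\de y_p}{y_p}$ with $j$ odd, as dictated by the $\mu_2$-part of $\chi$. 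For this I integrate first over the $y_p$. On a circle $|y_p|=1$ the integral gives $2\pi i\,I_0(t)$. On a ray $\RR_{>0}$ it gives $2K_0(t)$. Consequently $\int_{\tilde\alpha_i'}$ becomes $(2\pi i)^{i+1}\int_1^\infty I_0(t)^iK_0(t)^{k-i}t^{j}\frac{\de t}{t}$ up to normalizing constants. The $\tilde\xi$-corrections supply exactly the regularizing terms that Chuang attaches to the truncated integrals $\int_0^1$, since $\tilde\xi_r$ sweeps the half-circle in $t$ where $K_0$ undergoes monodromy $K_0\mapsto K_0-\pi i I_0$. Because $\chi$ is odd under $t\mapsto -t$, only the half-loop contributes, and this is where the Euler constants $\vartheta_n$, rather than Bernoulli-type constants, enter. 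Finally, I identify the weight $k+1$ piece $\Motive_k=W_{k+1}\coH^{k+1}(U_0,\tilde f_k)_\chi$ with the middle (regularized) pairing. For $k\equiv 2\bmod 4$ this requires removing the extra class $E$ of Proposition \ref{evenrep} and Proposition \ref{prop:HodgeNumbers}, exactly as Chuang's Corollary removes it.

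The main obstacle is the bookkeeping of the correction chains. I must show that the combination $\frac12\sum_r(-1)^r\binom{k-i}{r}\vartheta_r\tilde\xi_{i+r+1}$ contributes precisely Chuang's regularization terms and not merely some cycle that is homologous modulo other $\tilde\alpha_{i'}$. I would handle this by expanding $(K_0-\pi i I_0)^{k-i}$ along the half-turn and matching the binomial coefficients. I would also use that the generating function $2/(e^z+1)$ encodes the averaging between $t$ and $-t$. A rank count then completes the argument. There are $\flr{(k+1)/2}$ cycles $\tilde\alpha_i$, and the resulting period matrix is nondegenerate by Chuang's determinant formula. Hence these cycles form a basis, and the two period structures coincide.
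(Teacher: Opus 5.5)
Your plan is essentially the paper's proof. The paper likewise pairs the $\chi$-symmetrized cycles $\tau_i$ (whose periods against $\chi$-isotypic forms equal those of $\tilde\alpha_i$) with $w_j=t^{2j}\,\de t\prod_p\de\log y_p$ and compares the result with \cite[Cor.26]{Chuang}, following \cite[\S 7]{FSY:Bessel}; your fiber integrals ($2\pi i\,I_0$ on circles, $2K_0$ on rays), the half-monodromy $K_0\mapsto K_0-\pi i I_0$, and the inversion of $1+T$ via $2/(e^z+1)$ are exactly the ``direct calculation'' the paper leaves implicit.

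One small correction: $\tilde\alpha_i'$ has only $i$ circle factors, so the normalization is $(2\pi i)^i2^{k-i}$, not $(2\pi i)^{i+1}$. Also, the $\tilde\xi$-terms contribute precisely the missing $\int_0^1$, so up to sign the periods are the convergent moments $(2\pi i)^i2^{k-i}\int_0^\infty I_0(t)^iK_0(t)^{k-i}t^{2j}\,\de t$ (convergent because $i<k-i$), with no regularization required.
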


\begin{proof}
Consider the $\chi$-symmetrization of $\tilde\alpha_i$
\[ \tau_i = \frac{1}{|\symgp_k\times\mu_2|}
	\sum_{g\in\symgp_k\times\mu_2} \chi(g)g(\tilde\alpha_i),
\quad
0\leq i \leq\flr{\frac{k-1}{2}}. \]
They define elements in the $\chi$-isotypic part
of the rapid decay homology of middle degree of $(U,\tilde{f}_k)$
with rational coefficients.
Let $w_j = t^{2j}\de t\prod_{p=1}^k\de\log y_p, 0\leq j\leq \flr{(k-1)/2}$,
regarded as elements in the $\chi$-isotypic part
of the middle de Rham cohomology
of $(U_0,\tilde{f}_k)$ over $\QQ$.
Direct calculation of the periods of the cycles $\tau_i$
against the forms $w_j$ yields the assertion
(cf., the proof of \cite[Cor.7.12(2)]{FSY:Bessel}).
\end{proof}

\subsubsection*{Deligne's Conjecture}
This is about the special values of the $L$-functions.
Similar to \cite[\S 8]{FSY:Bessel},
one can compute explicitly the so-called critical values $c_n$,
up to an algebraic multiple.
The critical integers $n$ and the values $c_n$ are listed in the following
Table \ref{table:critical}.
The conjecture then says that
the value $L_k(n)$ is a rational multiple of $c_n$.

We define
\begin{align}
\label{eq:Deven}
D_{k,\reven} &= \det\begin{pmatrix}
\int_0^\infty I_0(t)^{2i}K_0(t)^{k-2i} t^{2j} \,\de t
\end{pmatrix}_{0\leq i,j\leq\flr{(k-1)/4}}, \\
\nonumber
D_{k,\rodd} &= \det\begin{pmatrix}
\int_0^\infty I_0(t)^{2i+1}K_0(t)^{k-1-2i} t^{2j} \,\de t
\end{pmatrix}_{0\leq i,j\leq\flr{(k-3)/4}}.
\end{align}
If $k\equiv 2\bmod{4}$,
we let $D_{k,\reven}'$
be the determinant of the submatrix
obtained by removing the last rows and columns of the matrix
in \eqref{eq:Deven}.

For example,
direct computations yield
\[ D_{1,\reven} = \int_0^\infty K_0(t)\,\de t = \frac{\pi}{2},
\quad
D_{1,\rodd} = 1,
\quad
D_{2,\reven} = \int_0^\infty K_0(t)^2\,\de t = \frac{\pi^2}{4}. \]
Furthermore,
let $\chi = \big(\frac{-1}{\cdot}\big), g = f_{60.3.b.a}$
as appeared in Theorem \ref{thm:EC}(iii).
We have\footnote{
For the second equalities in \eqref{eq:D_k:example}
relating Bessel moments to hypergeometric evaluations,
see \cite[(47)]{BBBG},
sequence A263490 in \cite{OEIS}
and \cite[(73)]{BBBG}, respectively.
The identifications between the latter and the critical values
of the $L$-functions of the relevant modular forms
can be found in \cite{AMNT, Zagier}.}
\begin{equation}\label{eq:D_k:example}
\begin{aligned}
D_{3,\reven} &= \int_0^\infty K_0(t)^3\,\de t
= \frac{\sqrt{3}\pi^3}{8}\pFq32{\frac{1}{2}&\frac{1}{2}&\frac{1}{2}}{&1&1}{\frac{1}{4}}
= \frac{\pi^2}{2}L(f_{48.3.e.a},1), \\
D_{3,\rodd} &= \int_0^\infty I_0(t)K_0(t)^2\,\de t
= \frac{\pi^2}{4} \pFq32{\frac{1}{2}&\frac{1}{2}&\frac{1}{2}}{&1&1}{\frac{1}{4}}
= 2L(f_{48.3.e.a},2), \\
D_{4,\reven} &= \int_0^\infty K_0(t)^4\,\de t
= \frac{\pi^4}{4}\pFq43{\frac{1}{2}&\frac{1}{2}&\frac{1}{2}&\frac12}{&1&1&1}{1}
= 4\pi^2L(f_{8.4.a.a},2),
\end{aligned}
\end{equation}
\begin{equation}\label{eq:D_k:example?}
\begin{aligned}
D_{4,\rodd} &= \int_0^\infty I_0(t)K_0(t)^3\,\de t
\overset{?}{=} 8L(f_{8.4.a.a},3), \\
D_{5,\rodd} &= \int_0^\infty I_0(t)K_0(t)^4\,\de t
\overset{?}{=} \pi^2L(\chi\cdot\Sym^2 g,2)
= 2^3\sqrt{15}L(\chi\cdot\Sym^2 g,3), \\
D_{6,\reven}' &= \int_0^\infty K_0(t)^6\,\de t
\overset{?}{=} 108\pi^2L(f_{24.6.a.a},4), \\
D_{6,\rodd} &= \int_0^\infty I_0(t)K_0(t)^5\,\de t
\overset{?}{=} 144L(f_{24.6.a.a},5).
\end{aligned}
\end{equation}
Here, the equalities with an ``?" in \eqref{eq:D_k:example?}
have been checked numerically.

When $k=1$, following from the fact that
$L(\chi,n)\sim 1$ if $n=-2a$
and $L(\chi,n)\sim \pi^{2a+1}$ if $n=2a+1$,
Deligne's conjecture holds in this case.
(The $L$-function $L(\chi,s)$ is also known as the
Dirichlet beta function $\beta(s)$.)
On the other hand,
by Theorem \ref{thm:EC},
the identities in \eqref{eq:D_k:example},
and the vanishing $L(f_{24.6.a.a},3) = 0$ of the central value
due to the negative sign of its functional equation,
the conjecture also holds for $k=3$
and $(k,n) = (4,3), (6,4)$.

\begin{table}[htb]
\renewcommand{\arraystretch}{1.2}
\begin{center}
\begin{tabular}{|c|c|c|c|}
\hline
$k$ & rank & critical integers $n$ & $c_n$ \\
\hline\rule{0pt}{3ex}
\multirow{2}{*}{$1$} & \multirow{2}{*}{$1$} & $1-2a$ & $D_{1,\rodd}=1$ \\
& & $2+2a, a\geq 0$ & $\pi^{2a} D_{1,\reven}\sim\pi^{2a+1}$
\\
\hline\rule{0pt}{3ex}
\multirow{2}{*}{$4r - 1$} & \multirow{2}{*}{$2r$}
& $2r$ & $\pi^{-r(r+1)}D_{k,\reven}$ \\
&& $2r + 1$
& $\pi^{-r(r-1)}D_{k,\rodd}$ \\
\hline\rule{0pt}{3ex}
\multirow{3}{*}{$4r$} & \multirow{3}{*}{$2r$}
& $2r$ & $\pi^{-r(r+1)}D_{k,\rodd}$ \\
&& $2r + 1$
& $\pi^{-r(r+1)}D_{k,\reven}$ \\
&& $2r + 2$
& $\pi^{-r(r-1)}D_{k,\rodd}$ \\
\hline\rule{0pt}{3ex}
\multirow{2}{*}{$4r + 1$} & \multirow{2}{*}{$2r+1$}
& $2r + 1$ & $\pi^{-r(r+1)}D_{k,\rodd}$ \\
&& $2r + 2$
& $\pi^{-r(r+1)}D_{k,\reven}$ \\
\hline\rule{0pt}{3ex}
\multirow{5}{*}{$4r+2$} & \multirow{5}{*}{$2r$}
& $2r$ & $\pi^{-r(r+3)}D_{k,\rodd}$ \\
&& $2r + 1$
& $\pi^{-r(r+3)}D'_{k,\reven}$ \\
&& $2r + 2$
& $\pi^{-r(r+1)}D_{k,\rodd}$ \\
&& $2r + 3$
& $\pi^{-r(r+1)}D'_{k,\reven}$ \\
&& $2r + 4$
& $\pi^{-r(r-1)}D_{k,\rodd}$ \\
\hline
\end{tabular}
\vspace{5pt}
\caption{\label{table:critical}Critical integers $n$ and values of $c_n$ ($r\geq 1$)}
\end{center}
\end{table}

\bibliographystyle{plain}
\bibliography{Evans_ref}

\end{document}